\definecolor{myred}{rgb}{0.75,0,0}
\definecolor{mygreen}{rgb}{0,0.5,0}
\definecolor{myblue}{rgb}{0,0,0.65}
\theoremstyle{plain}
\newtheorem{theorem}{Theorem}[section]
\newtheorem{proposition}[theorem]{Proposition}
\newtheorem{lemma}[theorem]{Lemma}
\newtheorem{corollary}[theorem]{Corollary}
\theoremstyle{definition}
\newtheorem{definition}[theorem]{Definition}
\newtheorem{remark}[theorem]{Remark}
\newtheorem{example}[theorem]{Example}
\newtheorem{conjecture}[theorem]{Conjecture}
\theoremstyle{remark}
\newtheorem{notation}[theorem]{Notation}
\numberwithin{equation}{section}
\newcommand\nc{\newcommand}
\nc\on{\operatorname}
\nc\renc{\renewcommand}
\newcommand\bc{{\mathbb C}}
\newcommand\br{{\mathbb R}}
\newcommand\bq{{\mathbb Q}}
\newcommand\bp{{\mathbb P}}
\newcommand\bz{{\mathbb Z}}
\newcommand\bg{{\mathbb G}}
\newcommand\sce{\mathscr E}
\newcommand\sck{\mathscr K}
\newcommand\sco{\mathscr O}
\newcommand\scx{\mathscr X}
\newcommand \ra{\rightarrow}
\newcommand \xra{\xrightarrow}
\DeclareMathOperator\spec{\text{Spec}}
\DeclareMathOperator\proj{\text{Proj}}
\newcommand*{\shom}{\mathscr{H}\kern -.5pt om}
\newcommand*{\stor}{\mathscr{T}\kern -.5pt or}
\newcommand*{\sext}{\mathscr{E}\kern -.5pt xt}
\newcommand{\customlabel}[2]{\protected@write \@auxout {}{\string \newlabel {#1}{{#2}{\thepage}{#2}{#1}{}} }\hypertarget{#1}{#2}}
\newcommand\aff[2]{\mathbb A_{#2}^{12{#1}+3}} %the affine space over which the family of Weierstrass models lives, arguments: (height of family of elliptic surfaces)
\newcommand\espace[2]{\mathscr W_{#2}^{#1}} %the open subscheme of affine space parameterizing minimal Weierstrass models, arguments: (height of elliptic surface) (base field)$
\newcommand\estack[2]{\underline{\mathscr W}_{#2}^{#1}} %the moduli stack of height d elliptic curves, arguments: (height of elliptic surface) (base field)$
\newcommand\smestack[2]{{\underline{\mathscr W}^{\circ}}_{#2}^{#1}} %open for smooth Weierstrass models in the moduli stack of height d elliptic curves, arguments: (height of elliptic surface) (base field)$
\newcommand\smespace[2]{\mathrm{\mathscr W^{\circ}}_{#2}^{#1}} %locus of espace parameterizing smooth Weierstrass models, arguments: (height of elliptic surface) (base field)$
\newcommand\uespace[2]{\mathscr U\!\!\mathscr W_{#2}^{#1}} %family of Weierstrass models over espace, arguments: (height of elliptic surface) (base field)$
\newcommand\usmespace[2]{\mathrm{\mathscr U\!\!\mathscr W^{\circ}}_{#2}^{#1}} %family of smooth Weierstrass models, arguments: (height of elliptic surface) (base field)$
\newcommand\sspace[3]{\mathrm{Sel}_{#1, #3}^{#2}} %the Selmer space, parameterizing \mathscr{E}[n] torsors, arguments: (selmer-integer) (height of elliptic surface) (base field)$
\newcommand\sstack[3]{\underline{\mathrm{Sel}}_{#1, #3}^{#2}} %the Selmer stack, a quotient of the Selmer space by the action on Weierstrass coordinates, arguments: (selmer-integer) (height of elliptic surface) (base field)$
\newcommand\smsstack[3]{{\underline{\mathrm{Sel}}^{\circ}}_{#1, #3}^{#2}} %open parameterizing smooth Weierstrass models in the selmer stack, a quotient of the selmer space by the action on weierstrass coordinates, arguments: (selmer-integer) (height of elliptic surface) (base field)$
\newcommand\smsspace[3]{{\mathrm{Sel}^{\circ}}_{#1, #3}^{#2}} %the selmer space, parameterizing \mathscr{E}[n] torsors, arguments: (selmer-integer) (height of elliptic surface) (base field)$
\newcommand\ssheaf[3]{\mathcal{S}e\ell_{#1, #3}^{#2}} %sheaf represented by the selmer space, arguments: (selmer-integer) (height of elliptic surface) (base field)$
\newcommand\smssheaf[3]{{\mathcal{S}e\ell^{\circ}}_{#1, #3}^{#2}} %sheaf represented by the selmer space, arguments: (selmer-integer) (height of elliptic surface) (base field)$
\newcommand\comp[2]{\Phi_{#1, #2}} %component group of N\'eron model at a place, argument:(elliptic curve model)(place)
\newcommand\fcomp[1]{\Phi_{#1}} %component group of N\'eron model, argument:(curve)
\newcommand\mono[3]{\rho_{#1, #3}^{#2}} %monodromy representation for the selmer space, arguments: (selmer-integer) (height of elliptic surface) (base field)$
\newcommand\mgeom[3]{\im \rho_{#1, \ol{#3}}^{#2}} %geometric monodromy representation for the selmer space, arguments: (selmer-integer) (height of elliptic surface) (base field)$
\newcommand\places[1]{\Sigma_{#1}}
\newcommand\sh{\mathrm{sh}}
\newcommand\vs[2]{V_{#1,#2}} %vectors space associated to first cohomology identified with selmer group, arguments: (elliptic curve) (integer)$
\newcommand\vsel[3]{V_{#1}^{#2}} %vectors space associated to generic fiber of selmer space, arguments: (selmer number) (degree) (field)$
\newcommand\qsel[3]{Q_{#1}^{#2}} %quadratic form preserved by selmer monodromy, arguments: (selmer number) (degree) (field)$
\newcommand\divsing[2]{\mathcal D_{\mathrm{sing},#2}^{#1}}
\newcommand\divadd[2]{\mathcal D_{\mathrm{cusp},#2}^{#1}}
\DeclareMathOperator\id{id}
\DeclareMathOperator\coker{coker}
\DeclareMathOperator\rk{rk}
\DeclareMathOperator\pic{Pic}
\DeclareMathOperator\im{im}
\DeclareMathOperator\sym{Sym}
\DeclareMathOperator\pgl{PGL}
\DeclareMathOperator\speci{sp}
\DeclareMathOperator\chr{char}
\newcommand\ol{\overline}
\DeclareMathOperator\aut{Aut}
\DeclareMathOperator\gl{GL}
\renewcommand\o{{\rm{O}}}
\newcommand\osp{{\rm{O^*_{-1}}}}
\DeclareMathOperator\sel{Sel}
\DeclareMathOperator\val{val}
\DeclareMathOperator\sm{sm}
\DeclareMathOperator\disc{disc}
\DeclareMathOperator\average{Average}
\DeclareMathOperator\grp{grp}
\DeclareMathOperator\et{\acute et}
\DeclareMathOperator\tame{tame}
\renewcommand\top{\mathrm{top}}
\DeclareFontFamily{U}{wncy}{}
\DeclareFontShape{U}{wncy}{m}{n}{<->wncyr10}{}
\DeclareSymbolFont{mcy}{U}{wncy}{m}{n}
\DeclareMathSymbol{\Sha}{\mathord}{mcy}{"58}
\def\listtodoname{List of Todos}
\def\listoftodos{\@starttoc{tdo}\listtodoname}
\title{The
geometric 
average size of Selmer groups over function fields}
\author{Aaron Landesman} \address{Department of Mathematics, Stanford University, \mbox{Stanford, CA 94305}}
\email{aaronlandesman@stanford.edu}
\begin{document}

\begin{abstract}
We show, in the large $q$ limit, that the average size of $n$-Selmer groups of elliptic curves of bounded height over $\mathbb F_q(t)$
is the sum of the divisors of $n$.
As a corollary, 
again in the large $q$ limit, we deduce that $100\%$ of elliptic curves of bounded height over $\mathbb F_q(t)$ have rank $0$ or $1$.
\end{abstract}

\maketitle

\section{Introduction}

One recent goal in arithmetic statistics is to determine the average size of $n$-Selmer
groups. 
The majority of results in this direction have concentrated on the regime $n \leq 5$. 
The goal of this paper is to
describe the average size of $n$-Selmer groups, in the large $q$ limit,
for arbitrary $n$, over function fields of the form $\mathbb F_q(t)$.
To start, we recall the following conjecture on the average size of $n$-Selmer groups.

\begin{conjecture}[Bhargava--Shankar \protect{\cite[Conjecture 4]{bhargavaS:average-4-selmer}}, Poonen--Rains \protect{\cite[Conjecture 1.4(b)]{poonenR:random-maximal-isotropic-subspaces-and-selmer-groups}}, and Bhargava--Kane--Lenstra--Poonen--Rains \protect{\cite[\S 5.7]{bhargavaKLPR:modeling-the-distribution-of-ranks-selmer-groups}}]
	\label{conjecture:bhargava-shankar}
	When all elliptic curves are ordered by height,
the average size of $n$-Selmer groups is the sum of the divisors of $n$.
\end{conjecture}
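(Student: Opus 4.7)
The conjecture is known for $n \le 5$ via the Bhargava--Shankar program: parameterize $n$-Selmer elements of elliptic curves over $\mathbb{Q}$ by integral orbits of a reductive group $G_n$ on a coregular representation $V_n$, count orbits of bounded height by geometry of numbers, and pass to locally soluble orbits by an Ekedahl-style sieve. My plan is to try to extend this template to arbitrary $n$.

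First I would seek a representation-theoretic parameterization. Concretely, the goal is a reductive group $G_n/\mathbb{Z}$, a representation $V_n$, and a height $H\colon V_n(\mathbb{R}) \to \mathbb{R}_{\ge 0}$ such that locally soluble $G_n(\mathbb{Z})$-orbits on $V_n(\mathbb{Z})$ of $H \le X$ biject with pairs $(E,\xi)$ with $E/\mathbb{Q}$ of height $\le X$ and $\xi \in \sel_n(E)$. For $n\le 5$ these are the classical coregular pairs (binary quartics, ternary cubics, pairs of quaternary quadrics, quintuples of skew-symmetric $5\times 5$ matrices). For $n \ge 6$ I would attempt a non-coregular parameterization: either slice a higher-dimensional representation transverse to the generic stabilizer, or add auxiliary level structure (a marked $n$-torsion basis, a theta characteristic, or a choice of isogeny) so that the moduli of decorated $n$-covers admits a linear model, and then average out the auxiliary data.

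Assuming such a parameterization, the subsequent steps are more standard in outline, though technically intricate: adapt the Bhargava averaging technique over a fundamental domain for $G_n(\mathbb{Z}) \backslash G_n(\mathbb{R})$ to count $G_n(\mathbb{Z})$-orbits on $V_n(\mathbb{Z})$ of bounded height; bound the cuspidal contribution (the main source of technical difficulty for $n=5$); and run a sieve uniform in the modulus to impose local solubility at each prime $p$. The resulting local densities should match those computed by Poonen--Rains in their random-maximal-isotropic model, whose product over all places equals $\sigma_1(n) = \sum_{d \mid n} d$, yielding the conjectured average.

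The first step is the fundamental obstacle and, to my knowledge, completely open for $n \ge 6$. Popov's classification implies there is no reductive coregular $(G_n, V_n)$ whose generic orbit space is a birational cover of the moduli of elliptic curves with an $n$-Selmer element, so naive extension of the Bhargava template fails. Overcoming this likely requires genuinely new input: a stacky or motivic form of orbit counting that works beyond coregular representations, a parameterization using non-reductive groups (e.g. Jacobians of higher-genus covers), or a wholly different strategy such as an automorphic or moment-of-$L$-function computation of the average of $|\sel_n(E)|$. The large-$q$ function-field theorem proved in this paper gives strong evidence for the conjecture but does not by itself bridge to $\mathbb{Q}$, since transferring equidistribution from $\mathbb{F}_q(t)$ to a number field is itself a major open problem.
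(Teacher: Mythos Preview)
The statement you are addressing is a \emph{conjecture}, not a theorem, and the paper does not prove it. It is stated as motivation and remains open in general; the paper's contribution is Theorem~\ref{theorem:number-components}, which establishes the predicted average in the large-$q$ limit over $\mathbb{F}_q(t)$, not over $\mathbb{Q}$ or for fixed $q$. So there is no ``paper's own proof'' to compare your proposal against.

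Your write-up is not a proof but an honest survey of the known approach for $n\le 5$ and the obstructions for $n\ge 6$, and you correctly identify the lack of a coregular parameterization as the fundamental barrier. You also correctly note at the end that the function-field result in this paper does not transfer to $\mathbb{Q}$. That is all accurate, but it means your proposal does not close any gap: it restates the state of the art and explains why the conjecture is open, rather than offering a new argument. If the task was to supply a proof, the right answer is simply that none exists for general $n$, and your outline confirms this rather than circumvents it.
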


So far, \autoref{conjecture:bhargava-shankar}
has been proven for $2, 3, 4$, and $5$-Selmer
groups over $\bq$ by Bhargava and Shankar \cite{bhargava-shankar:binary-quartic-forms-having-bounded-invariants, bhargavaS:ternary, bhargavaS:average-4-selmer, bhargavaS:average-5-selmer},
for $3$-Selmer groups over $\mathbb F_q(t)$ by de Jong \cite{de-jong:counting-elliptic-surfaces-over-finite-fields},
and for $2$-Selmer groups over function fields by 
H{\`{\^o}}, L\^e H\`ung, and Ng{\^o}
\cite{ho-lehung-ngo:average-size-of-2-selmber-groups}.
These proofs all depend on specific geometric descriptions of $n$-Selmer elements
for $n \leq 5$ and seem difficult to generalize to 
$n > 5$.
There have also been predictions for the full distributions of $n$-Selmer groups in
\cite{poonenR:random-maximal-isotropic-subspaces-and-selmer-groups} and
\cite{bhargavaKLPR:modeling-the-distribution-of-ranks-selmer-groups}.

We now restrict our attention to global fields of the form 
$\mathbb F_q(t)$ with $\chr(\mathbb F_q) \neq 2$.
For any elliptic curve $E$ over such a field,
there is a unique $d$ 
so that $E$
can be written in minimal Weierstrass form as
$y^2z = x^3 + a_2(s,t)x^2z + a_4(s,t)xz^2 + a_6(s,t)z^3$,
where $a_{2i}(s,t) \in \mathbb F_q[s,t]$ is a homogeneous polynomial of degree $2id$, for $i \in \left\{ 1,2,3 \right\}$
(see \autoref{subsubsection:height}).
Define the {\em height} of $E$, notated $h(E),$ to be this value of $d$.

We next define several notions of the average size of the $n$-Selmer group.
Let $T$ be a function sending isomorphism classes of elliptic curves over $\mathbb F_q(t)$ to $\br$. 
Letting 
$\sel_n(E)$ denote the
$n$-Selmer group of $E$,
one relevant such statistic is $\# \sel_n$, given by $E \mapsto \# \sel_n(E)$. 
Letting $E$ range over isomorphism classes of elliptic curves over $\mathbb F_q(t)$, the {\em average size} of $T$ over $\mathbb F_q(t)$ for elliptic curves of height up to $d$, if it exists, is then
	\begin{equation}
		\label{equation:average}
		\average^{\leq d}(T/\mathbb F_q(t)) := \frac{\sum\limits_{\substack{E/\mathbb F_q(t), \, h(E) \leq d}} T(E)}{\# \left\{ E: E/\mathbb F_q(t), \, h(E) \leq d \right\}}.
\end{equation}
In \autoref{conjecture:bhargava-shankar}, ``the average size of the $n$-Selmer groups'' over $\mathbb F_q(t)$ can be formulated as 
$\lim_{d \ra \infty} \average^{\leq d}(\# \sel_n /\mathbb F_q(t))$.
While this may be challenging to compute for general $n$, we can try to determine this by first taking the limit in $q$, and then
taking the limit in $d$.
For $q$ ranging over prime powers, we deem $\lim\limits_{\substack{q \ra \infty\\ \gcd(q,2n)=1}} \average^{\leq d}(\# \sel_n / \mathbb F_q(t))$ the {\em geometric average size} of the $n$-Selmer group of height up to $d$.
Our main result is that this modified limit evaluates to the prediction of \autoref{conjecture:bhargava-shankar} for $d \geq 2$.
\begin{theorem}
	\label{theorem:number-components}
	For $n \geq 1$ and $d \geq 2$,
	\begin{align}
		\label{equation:average-selmer}
		\lim_{\substack{q \ra \infty \\ \gcd(q,2n) = 1}}
		\average^{\leq d}(\# \sel_n / \mathbb F_q(t))
		=
		\sum_{m \mid n} m.
	\end{align}
\end{theorem}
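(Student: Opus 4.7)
The plan is to reduce \autoref{theorem:number-components} to a count of top-dimensional geometric irreducible components of a moduli stack of Selmer elements. Specifically, I would introduce a stack $\sstack{d}{n}{\mathbb{F}_q}$ parameterizing pairs $(E, \alpha)$ where $E$ is an elliptic curve of height at most $d$ over $\mathbb{F}_q(t)$ and $\alpha \in \sel_n(E)$, together with a forgetful map $\sstack{d}{n}{\mathbb{F}_q} \to \estack{d}{\mathbb{F}_q}$ to a moduli stack of elliptic curves of height at most $d$; the fiber over $E$ has size $\#\sel_n(E)$, so the average rewrites as the ratio of stacky $\mathbb{F}_q$-point counts
\[
\average^{\leq d}(\#\sel_n / \mathbb{F}_q(t)) = \frac{\#\sstack{d}{n}{\mathbb{F}_q}(\mathbb{F}_q)}{\#\estack{d}{\mathbb{F}_q}(\mathbb{F}_q)}.
\]
A Lang--Weil estimate for finite type Deligne--Mumford stacks then gives $\#X(\mathbb{F}_q) = c_X \cdot q^{\dim X} + O(q^{\dim X - 1/2})$, where $c_X$ counts top-dimensional geometric components of $X_{\overline{\mathbb{F}}_q}$ defined over $\mathbb{F}_q$ (which in the $q \to \infty$ limit coincides with the total count of top components). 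Provided $\sstack{d}{n}{\mathbb{F}_q}$ and $\estack{d}{\mathbb{F}_q}$ are equidimensional of the same dimension---which follows because the forgetful map is quasi-finite, indeed generically étale---the large-$q$ limit reduces to the purely geometric identity that the number of top-dimensional components of $\sstack{d}{n}{\overline{\mathbb{F}}_q}$ equals $\sigma(n)$ times the number of top-dimensional components of $\estack{d}{\overline{\mathbb{F}}_q}$.

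The geometric identity is the heart of the argument. I would first restrict to the dense open substack $\smsstack{d}{n}{\overline{\mathbb{F}}_q}$ above the smooth locus $\smestack{d}{\overline{\mathbb{F}}_q}$ (elliptic curves of height exactly $d$ with squarefree discriminant), where the forgetful map is étale and its components correspond, by a standard covering-space/monodromy dictionary, to orbits of the geometric étale fundamental group of $\smestack{d}{\overline{\mathbb{F}}_q}$ acting on a ``generic'' $n$-Selmer group. Following the Poonen--Rains picture, I would model this Selmer group as the intersection of two maximal isotropic subspaces in a symplectic $(\mathbb{Z}/n\mathbb{Z})$-module: one subspace encoding the generic monodromy on $\mathcal{E}[n]$, and the other encoding the local ramification data at the bad reduction fibers. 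Assuming big monodromy---that the geometric monodromy on $\mathcal{E}[n]$ surjects onto $\gsp_2(\mathbb{Z}/n\mathbb{Z})$ (which coincides with $\gl_2(\mathbb{Z}/n\mathbb{Z})$ in rank two)---the orbit count on Selmer classes should stratify by a divisor $m \mid n$ measuring the ``order'' of the class, with the stratum indexed by $m$ contributing exactly $m$ components, so the total is $\sum_{m \mid n} m = \sigma(n)$.

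The main obstacle is the combined monodromy and orbit computation. The big-monodromy statement for the family $\mathcal{E}[n] \to \smestack{d}{\overline{\mathbb{F}}_q}$ requires $d \geq 2$---reflecting that the $d = 1$ case of rational elliptic surfaces carries strictly smaller monodromy---and likely demands either an explicit degeneration to a known large-monodromy configuration or an appeal to surjectivity results for suitable modular parameterizations. Passing from the naive $\tau(n)$ orbits of $\gsp_2(\mathbb{Z}/n\mathbb{Z})$ on $(\mathbb{Z}/n\mathbb{Z})^2$ to the refined count $\sigma(n)$ on Selmer classes then requires a careful geometric analog of the Poonen--Rains intersection-of-maximal-isotropics computation, suitably equivariant for the monodromy action and adapted to the stacky framework. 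Secondary technical burdens include verifying that $\sstack{d}{n}{\mathbb{F}_q}$ exists with the desired moduli interpretation, controlling the boundary contributions from the complement of $\smestack{d}{\overline{\mathbb{F}}_q}$ so that they do not spoil the Lang--Weil leading term, and ensuring the $O(q^{-1/2})$ error in the stacky point count is uniform enough to survive the $q \to \infty$ limit.
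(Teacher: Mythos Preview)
Your overall architecture---construct a Selmer stack, rewrite the average as a ratio of stacky point counts, apply Lang--Weil, and reduce to an orbit count for a monodromy action on the generic fiber---matches the paper's strategy. But the heart of your argument, the monodromy and orbit computation, is misidentified in a way that would prevent the proof from going through as written.

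The monodromy representation you want is not the rank-$2$ representation of $\pi_1(\smestack d {\overline{\mathbb F}_q})$ on $\mathcal E[n]$ with image in $\gl_2(\mathbb Z/n\mathbb Z)$. The Selmer cover over the smooth locus is finite \'etale with fibers $H^1(\bp^1_{\overline k}, \sce[n])$, which (for height exactly $d$, squarefree discriminant) is a \emph{free $\mathbb Z/n\mathbb Z$-module of rank $12d-4$}. The monodromy of $\pi_1$ acts on this large module, and it preserves a nondegenerate \emph{quadratic form} $Q$ coming from the cup product and Poincar\'e duality on $\bp^1$. Thus the relevant monodromy group sits inside an orthogonal group $\o(Q)$, not a symplectic or $\gl_2$. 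The paper invokes \cite{de-jongF:on-the-geometry-of-principal-homogeneous-spaces} to show that over $\mathbb C$ the image contains the reduction mod $n$ of the kernel of the $(-1)$-spinor norm, and then transfers this to characteristic $p$ by proving the cover is tame along the boundary divisors of a compactification---a nontrivial step you do not address. The orbit count $\sigma(n)$ then falls out cleanly: for each $i\in\mathbb Z/n\mathbb Z$ the primitive vectors $v$ with $Q(v)=i$ form a single orbit, giving $n$ orbits among primitive vectors, and summing over the divisor recording imprimitivity yields $\sum_{m\mid n} m$.

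Your proposed route through $\gl_2$-monodromy on $\mathcal E[n]$ combined with a Poonen--Rains intersection of maximal isotropics is one of the heuristics the paper mentions, but it is not what is actually proved, and making it rigorous in this setting would require building and controlling an equivariant model of the local-conditions subspace under the full $\pi_1$-action---substantially more than what you sketch. As written, knowing only that the $\mathcal E[n]$-monodromy is big does not by itself determine the orbits on $H^1(\bp^1,\sce[n])$.
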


In \autoref{theorem:number-components}, the limit is taken over all prime powers $q$ relatively prime to $2n$.
In particular, the same statement holds when one restricts $q$ to range over powers of a fixed prime $p$ that does not divide $2n$.
See \autoref{subsection:remarks} for further related remarks.

We next state a corollary showing that, in the large $q$ limit, $100\%$ of elliptic curves of height up to $d$ either have rank $0$ or $1$.
In subsequent work
\cite[Corollary 1.5]{fengLR:geometric-distribution-of-selmer-groups}, we were able to strengthen \autoref{corollary:average-rank} to compute the distribution
of ranks of elliptic curves in this large $q$ limit.
As we learned in private communication with Manjul Bhargava,
the following line of reasoning is based upon an idea he had many years ago, and is explained in
\cite[Proposition 5]{bhargavaS:average-4-selmer} and
\cite[p.246-247]{poonenR:random-maximal-isotropic-subspaces-and-selmer-groups}.
As we are taking a limit in $q$ instead of $d$, the situation is slightly different from that in the above references,
and we now spell out the details.
To make a precise statement, define the statistic $\delta_{\rk \geq 2}$ on isomorphism classes of elliptic curves $E$ by 
$\delta_{\rk \geq 2}(E) = 1$ if $\rk(E) \geq 2$ and $\delta_{\rk \geq 2}(E) =0$ if $\rk(E) < 2$.
In this case, $\average^{\leq d}(\delta_{\rk \geq 2}/\mathbb F_q(t))$ is the proportion of isomorphism classes of elliptic curves over $\mathbb F_q(t)$ of height up to $d$ with rank
at least $2$.
The following corollary can alternatively be deduced from \cite[Theorem 13.3.3]{katz:moments-monodromy-and-perversity}.
\begin{corollary}
	\label{corollary:average-rank}
	For $d \geq 2$, $\lim\limits_{\substack{q \ra \infty\\ \gcd(q, 2)=1}} \average^{\leq d}(\delta_{\rk \geq 2} / \mathbb F_q(t)) = 0$.
	%Colloquially stated, for $d \geq 2$, the proportion of elliptic curves of height up to $d$ with rank at least $2$ tends to $0$ as $q \ra \infty$.
\end{corollary}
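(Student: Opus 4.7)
The plan is to deduce the corollary from \autoref{theorem:number-components} by observing that high-rank curves force large $n$-Selmer groups. Specifically, for an elliptic curve $E/\mathbb F_q(t)$ of Mordell--Weil rank $r$, the group $E(\mathbb F_q(t))/n E(\mathbb F_q(t))$ has size at least $n^r$, and since it injects into $\sel_n(E)$ via the Kummer sequence, $\# \sel_n(E) \geq n^r$. In particular, $\# \sel_n(E) \geq n^2$ for every $E$ with $\rk(E) \geq 2$. Summing over all $E/\mathbb F_q(t)$ of height at most $d$ and dividing by the total count would yield
\begin{equation*}
n^2 \cdot \average^{\leq d}(\delta_{\rk \geq 2}/\mathbb F_q(t)) \leq \average^{\leq d}(\# \sel_n / \mathbb F_q(t)).
\end{equation*}

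I would then apply \autoref{theorem:number-components} to the right-hand side: for each fixed odd $n$, taking the limit as $q \to \infty$ along $\gcd(q, 2n) = 1$ and dividing by $n^2$ gives
\begin{equation*}
\limsup_{\substack{q \to \infty\\ \gcd(q, 2n) = 1}} \average^{\leq d}(\delta_{\rk \geq 2}/\mathbb F_q(t)) \leq \frac{\sigma(n)}{n^2}.
\end{equation*}
Specializing $n$ to an odd prime $p$, the right-hand side becomes $(p+1)/p^2$, which tends to $0$ as $p \to \infty$.

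The only remaining technical point is that the corollary's limit is over all odd prime powers $q$, whereas the bound above only controls $q$ coprime to $2p$. To bridge this, given $\epsilon > 0$ I would fix two distinct odd primes $p_1 \neq p_2$ with $(p_i + 1)/p_i^2 < \epsilon/2$; any odd prime power $q$ is coprime to at least one of $p_1, p_2$, so applying the bound for whichever $p_i$ is coprime to $q$ would give $\average^{\leq d}(\delta_{\rk \geq 2}/\mathbb F_q(t)) < \epsilon$ for all sufficiently large odd $q$. Since the quantity is nonnegative, this shows that the limit exists and equals $0$. I expect this final bookkeeping to be the only step that requires any care; all of the arithmetic content is packaged inside \autoref{theorem:number-components}.
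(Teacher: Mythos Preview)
Your proof is correct and follows essentially the same approach as the paper: both bound $n^2\,\delta_{\rk\geq 2}(E)\leq \#\sel_n(E)$ and invoke \autoref{theorem:number-components} for a large prime $n$. The only cosmetic difference is in the coprimality bookkeeping: the paper argues by contradiction, extracting a bad infinite sequence of $q_i$'s and choosing a single large prime $n$ coprime to infinitely many of them, whereas you argue directly by covering all odd prime powers with two auxiliary primes $p_1,p_2$; both devices serve the same purpose and neither carries additional content.
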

\begin{proof}
	Suppose 
	$\lim\limits_{\substack{q \ra \infty\\ \gcd(q, 2)=1}} \average^{\leq d}(\delta_{\rk \geq 2} / \mathbb F_q(t)) \neq 0$.
	Then, there is some $\varepsilon > 0$ and some infinite sequence of powers of odd primes $\{q_i\}_{i \in \bz_{> 0}}$
	so that $\average^{\leq d}(\delta_{\rk \geq 2} / \mathbb F_{q_i}(t)) > \varepsilon$ for all such $q_i$.
	Choose a sufficiently large prime number $n$ so that
$\frac{n+2}{n^2} < \varepsilon$
and $n$ is relatively prime to infinitely many of the $q_i$.
Replace the sequence $\{q_i\}_{i \in \bz_{> 0}}$ by an infinite subsequence with all terms relatively prime to $n$.
	Since $n^2 \delta_{\rk \leq 2}(E) \leq n^{\rk(E)} \leq \# \sel_n(E)$ for any elliptic curve $E$, we find
	that for any $q_i$ in the above sequence,
	\begin{align*}
	n^2 \varepsilon < n^2 \average^{\leq d}(\delta_{\rk \geq 2} / \mathbb F_{q_i}(t)) \leq \average^{\leq d}\left( \# \sel_n / \mathbb F_{q_i}(t) \right).
	\end{align*}
%\begin{align*}
%	n^2 \varepsilon 
%	%\leq n^2 \lim\limits_{\substack{q \ra \infty\\ \gcd(q, 2)=1}} \average^{\leq d}(\delta_{\rk \geq 2} / \mathbb F_q(t)) 
%%	&\leq \lim\limits_{\substack{q \ra \infty\\ \gcd(q, 2)=1}} \average^{\leq d}( n^{\rk} / \mathbb F_q(t))  \\
%	&\leq %\lim\limits_{\substack{q \ra \infty\\ \gcd(q, 2)=1}} 
%	\average^{\leq d}( \#\sel_n / \mathbb F_{q_i}(t))	
%	%= n + 1.
%\end{align*}
However, as $n$ is prime and $d \geq 2$, by \autoref{theorem:number-components}, 
$\lim\limits_{\substack{q \ra \infty\\ \gcd(q, 2n)=1}}
\average^{\leq d}( \#\sel_n / \mathbb F_{q}(t)) = n +1$.
Therefore, there can only finitely many $q_i$ which are relatively prime to $2n$ so that $\average^{\leq d}( \#\sel_n / \mathbb F_{q_i}(t)) \geq n + 2$.
Hence, $\varepsilon n^2 < n + 2$, contradicting our choice of $n$.
\end{proof}

\subsection{Remarks on \autoref{theorem:number-components}}
\label{subsection:remarks}

We now make some remarks on various aspects of \autoref{theorem:number-components} including heuristics, homological stability, families of quadratic twists, and more.
\begin{remark}[Four heuristics for the average size of Selmer groups]
	\label{remark:three-heuristics-average}
	When computing the average size of Selmer groups, it is natural to ask if there is some deeper reason for
	why the average size of $n$-Selmer groups should be $\sum_{m \mid n} m$. Here are four heuristics which suggest this description.
	\begin{enumerate}
		\item 	In 
\cite[Conjecture 4]{bhargavaS:average-4-selmer},
the average size of $n$-Selmer groups is related to the fact that the Tamagawa number $\tau(\pgl_m)$ is $m$, and their average size is
the sum $\sum_{m \mid n} \tau(\pgl_m)$ for $m \mid n$.
	The same heuristic is used in
	\cite{bhargava-shankar:binary-quartic-forms-having-bounded-invariants, bhargavaS:ternary, bhargavaS:average-5-selmer},
and \cite{ho-lehung-ngo:average-size-of-2-selmber-groups}.
\item In this paper, we present another heuristic: the average size of $n$-Selmer groups is the number of orbits
	of a certain orthogonal group $\o(\qsel n d k)$ on a rank $12d-4$ free $\bz/n\bz$ module.
	Such orbits are in bijection with irreducible 
	components of a moduli space for Selmer elements, which we call the
	$n$-Selmer space in \autoref{definition:selmer-space}.

\item A third heuristic appears in \cite{de-jong:counting-elliptic-surfaces-over-finite-fields}
	for $3$-Selmer groups, in \cite{vakil:twelve-points-on-the-projective-line} for $2$-Selmer groups, and in
	\cite[Theorem 5.4]{de-jongF:on-the-geometry-of-principal-homogeneous-spaces} for $n$-Selmer groups.
	These works suggest that the average size of the $n$-Selmer group should equal 
	the number of balanced (also called rigid) rank $m$ projective bundles over $\bp^1$ 
	for $m \mid n$. 
	Indeed, the balanced rank $m$ projective bundles are all of the form $\proj_{\bp^1}\sym^\bullet\left( \sco^{\oplus a} \oplus \sco(-1)^{\oplus m-a} \right)$
	for $1 \leq a \leq m$, and so there are $m$ total such bundles. Altogether, there are $\sum_{m \mid n} m$ such bundles as $m$ ranges over
	the
	divisors of $n$.
	
\item A fourth heuristic for the average size of Selmer groups, 
	which comes from a heuristic distribution for the sizes of Selmer groups, is given in \cite{poonenR:random-maximal-isotropic-subspaces-and-selmer-groups}
	in terms of maximal isotropic subgroups of quadratic spaces.
%	However, the connection between this heuristic and the quadratic form in $(2)$ above is not clear to us.
	\end{enumerate}

	We next sketch why heuristics $(1)$, $(2)$, and $(3)$ above all yield the same average size $\sum_{m \mid n} m$.
	First we connect $(1)$ and $(3)$. The balanced bundles appearing in $(3)$ index the connected components of the moduli stack of $\pgl_m$ bundles
	on $\bp^1$. So, to identify $(1)$ and $(3)$ it suffices to show the number of
	connected components of the moduli stack of $\pgl_m$ bundles on $\bp^1$ equals $\tau(\pgl_m)$. This follows from \cite[Corollary 3.4]{behrendD:connected-components-moduli}.

	The equivalence between $(2)$ and $(3)$
	follows from piecing together \cite[Lemma 4.8, p. 785 line 20, Theorem 5.4, and Theorem 4.9]{de-jongF:on-the-geometry-of-principal-homogeneous-spaces}.
	To explain this briefly, 
	the quadratic form from $(2)$, which \cite{de-jongF:on-the-geometry-of-principal-homogeneous-spaces}
	notate as $\wp$, is induced by the Pontrjagin square map, and the value of this form on a Selmer element
	is related by
	\cite[Lemma 4.8 and p. 785 line 20]{de-jongF:on-the-geometry-of-principal-homogeneous-spaces} to the degree of a certain vector bundle.
	For general Selmer elements, the splitting type of this vector bundle is determined by its degree using that the bundle is balanced
	\cite[Theorem 5.4]{de-jongF:on-the-geometry-of-principal-homogeneous-spaces}.
	Conversely, every balanced splitting type of a vector bundle on $\bp^1$ corresponds to a component of the Selmer space by 
	\cite[Theorem 4.9]{de-jongF:on-the-geometry-of-principal-homogeneous-spaces}
	and so one can reverse the above process to determine the value of the quadratic form from the splitting type of the associated vector bundle.
	%In their notation, 
	%\cite[Lemma 4.8]{de-jongF:on-the-geometry-of-principal-homogeneous-spaces} %relates the value of the quadratic form on a class $\alpha$, $\wp(\alpha),$ to the self intersection of a divisor $D$,
	%where $\alpha$ corresponds to a quadruple $(Y, f, \phi, D)$ as in
	%\cite[Corollary 4.3]{de-jongF:on-the-geometry-of-principal-homogeneous-spaces}.
	%Next, 
	%\cite[p. 785 line 20]{de-jongF:on-the-geometry-of-principal-homogeneous-spaces} relates the self intersection of $D$ to the degree of a certain vector bundle
	%$R^1 f_* \sco_Y(-D)$.
	%Finally, by \cite[Theorem 4.9 and Theorem 5.4]{de-jongF:on-the-geometry-of-principal-homogeneous-spaces},
	%it follows components of the Selmer space are indexed by balanced vector bundles on $\bp^1$ of rank $m$ for $m \mid n$,
	%and so $R^1 f_* \sco_Y(-D)$ will be balanced for a general tuple $(Y,f, D)$ as in \cite[Theorem 4.9]{de-jongF:on-the-geometry-of-principal-homogeneous-spaces}, 
	%corresponding to a general element of the Selmer space.
	%Since balanced vector bundles on $\bp^1$ are determined by their degree, one can then read off the splitting type of $R^1 f_* \sco_Y(-D)$ from its degree, and hence
	%from the value of the quadratic form $\wp(\alpha)$. Similarly, $\wp(\alpha)$ is determined by the splitting type of $R^1 f_* \sco_Y(-D)$.

	It would be interesting to better understand how heuristics (1), (2), and (3) are related to (4) above.
\end{remark}

\begin{remark}[Heuristics for distributions of Selmer groups]
	\label{remark:poonen-rains-heuristics}
	In addition to predictions for average sizes of Selmer groups,
	there are also predictions for the higher moments and distributions
	of Selmer groups, such as in \cite{poonenR:random-maximal-isotropic-subspaces-and-selmer-groups}
	and \cite{bhargavaKLPR:modeling-the-distribution-of-ranks-selmer-groups}.
	In collaboration with Tony Feng and Eric Rains \cite{fengLR:geometric-distribution-of-selmer-groups}, we built on
	\autoref{theorem:n-monodromy-image}
	to prove these predictions
	over function fields, in the large $q$ limit.

	While this ultimately boils down to a version of the Chebotarev density theorem,
	significant work beyond the results of this paper is needed.
	For example, in order determine the full distribution,
	we require a more precise version of \autoref{theorem:n-monodromy-image},
	which computes the relevant monodromy group exactly. 
	Note that in \autoref{theorem:n-monodromy-image}, we only show the relevant monodromy group contains
	a suitably large subgroup, which is sufficient for the purposes of computing the average size of $n$-Selmer groups,
	but not for determining the distribution of $n$-Selmer groups.
	It is also nontrivial to show the resulting distribution agrees with that predicted in
	\cite{bhargavaKLPR:modeling-the-distribution-of-ranks-selmer-groups}.
	We accomplish this by showing the two distributions agree modulo primes,
	and then prove they can be described in terms of the same Markov process to show they
	agree modulo prime powers.
\end{remark}
\begin{remark}[Relation to homological stability]
	\label{remark:homological-stability}
	We now explain how \autoref{theorem:number-components}
	can be viewed as a step toward determining
	$\average(\sel_n/\mathbb F_q(t))$.

	By the Lang-Weil estimate,
	the limit in \autoref{theorem:number-components} is really computing the
	number of irreducible components of a certain stack we call the ``$n$-Selmer stack of height $d$,''
	which we notate as $\sstack n d {\mathbb F_p}$.
	Essentially, the $\mathbb F_q$ points of $\sstack n d {\mathbb F_p}$
	parameterize $n$-Selmer
	elements on elliptic curves over $\mathbb F_q(t)$ of height $d$.
	In this sense, \autoref{theorem:number-components} is demonstrating cohomological stability
in $d$ for $H^0( \sstack n d {\ol{\mathbb F}_p}, \mathbb Q_\ell)$, which counts the number of irreducible components
	of $\sstack n d {\ol{\mathbb F}_p}$.

	If one could show the other cohomologies of $\sstack n d {\ol{\mathbb F}_p}$ 
	also stabilize in $d$, 
	this would go a long way toward showing
	$\average(\sel_n/\mathbb F_q(t))=\sum_{m \mid n} m$.
	Indeed, one could then try to apply the Grothendieck Lefschetz trace formula to compute
	the average size of Selmer groups,
	similarly to how 
	the average size of $\ell$-parts of class groups are computed in 
	\cite{EllenbergVW:cohenLenstra} (see also
	\cite{churchEF:representation-stability-finite-fields} and \cite{farbW:etale-homological-stability-and-arithmetic-statistics}).
	One may alternatively wish to demonstrate homological stability for the closely related stacks parameterizing points of
	$\mathcal A_{n,d}$ and $\mathcal B_{n,d}$, introduced in
	\cite[\S5.2 and \S5.3]{de-jong:counting-elliptic-surfaces-over-finite-fields}.
\end{remark}

\begin{remark}[Average sizes, with a twist!]
	\label{remark:}
	Let $\mathbb F_q$ be a finite field of characteristic more than $3$.
	Recall that if $E$ is an elliptic curve over $\mathbb F_q(t)$ 
	defined by $y^2z = x^3 + a_4(s,t)xz^2 + a_6(s,t)z^3$, one can define the quadratic twist family of degree $d$
	as those elliptic curves of the form
	$f(s,t)y^2z = x^3 + a_4(s,t)xz^2 + a_6(s,t)z^3$,
	for $f(s,t) \in k[s,t]$ varying over square-free homogeneous polynomial of degree $d$.
	This is a family over an open subscheme of affine space parameterized by the coefficients of $f(s,t)$.

	As we were working on this problem, we learned of forthcoming work of Park and Wang \cite{parkW:average-selmer-rank-in-quadratic-twist-families}. 
	They prove a
	result analogous to \autoref{theorem:number-components}, 
	at least in the case $n$ is a prime more than $3$,
	for
	particular quadratic twist families. 
%	We mention that with some additional work, one can adapt the proof of
%	\autoref{theorem:number-components} to prove a variant for quadratic twist families:
%	Let $E$ be an
%	elliptic curve over $\mathbb F_q(t)$ with Tamagawa number $\# \fcomp E$.
%	Suppose $n$ is prime to $6 \cdot \# \fcomp E$, and $E$ has
%	at least one fiber of multiplicative reduction.
%	Then, for all sufficiently large $d$ depending on $n$,
%	the average size of $n$-Selmer groups over the degree $d$ quadratic twist family associated to $E$
%	approaches $\sum_{m \mid n} m$ as $q \rightarrow \infty$ with $\gcd(q,6n) = 1$.

	The method of \cite{parkW:average-selmer-rank-in-quadratic-twist-families} is similar to ours.
	The essential idea is to replace \autoref{theorem:n-monodromy-image}
	by the result of Hall \cite[Theorem 6.3]{hall:bigMonodromySympletic}. 
	In order to apply \cite[Theorem 6.3]{hall:bigMonodromySympletic},
	the authors work with a slightly different, but closely related to,
	the sheaf $\ssheaf n d B$ we consider in this paper.

	One can extend the result of \cite{parkW:average-selmer-rank-in-quadratic-twist-families} for prime $n$ more than $3$ to composite $n$ with $\gcd(n,6) = 1$, as we now sketch.
	For such $n$, one can calculate the monodromy of certain quadratic twist families
	by bootstrapping Hall's result for primes 
	to composites using \cite[Theorem 1.3]{vasiu2003surjectivity}
	and Goursat's lemma as in \cite[Proposition 2.5]{Greicius:surjective}.
	One can use this to deduce a variant of \autoref{theorem:number-components} for the quadratic twist families investigated in
	\cite{parkW:average-selmer-rank-in-quadratic-twist-families}, 
	which is applicable for composite $n$ (and not just prime $n$) with $\gcd(n,6) = 1$.

	Finally, 
	we mention one significant difference between the universal family examined in this paper and quadratic twist families.
	For the universal family, the monodromy computation from \cite[Theorem 4.10]{de-jongF:on-the-geometry-of-principal-homogeneous-spaces}
	was only carried out over $\bc$. Adapting this computation in characteristic $p \neq 2$ occupies the majority of
	\autoref{section:2-3-positive-characteristic}.
	In contrast, Hall's result \cite[Theorem 6.3]{hall:bigMonodromySympletic} already applies in positive characteristic.
	So, to compute the monodromy of quadratic twist families one does not have to grapple with the issue of relating the monodromy in characteristic $p$ to that
	in characteristic $0$. 
\end{remark}
\begin{remark}[$d=1$]
	\label{remark:}
	The assumption $d \geq 2$ in \autoref{theorem:number-components} is necessary.
	When $d = 1$, 
	the average size of $3$-Selmer groups in the large $q$ limit is $5$, as follows from
	\cite[Remark 7.8]{de-jong:counting-elliptic-surfaces-over-finite-fields}.
	It is an interesting open question to determine the average size for $n$-Selmer groups in the large $q$ limit when $d = 1$.
	When $n > 2$, the associated monodromy map in \autoref{definition:monodromy-representation}
	has image isomorphic to $W(E_8)$, the Weyl group of type $E_8$.
	In the case $n = 2$, the image is $W(E_8)/\left\{ \pm 1 \right\}$, as
	is explained in \cite[Proposition 4.2(a)]{vakil:twelve-points-on-the-projective-line}.
	The action of this group can be seen via its permutation of the $240$ lines
	on a del Pezzo $1$ surface, corresponding to the $240$ roots of $E_8$.
%essentially this is because the action on the $\bz$-lattice is via $W(E_8)$, the action on the $n$-Selmer group is the reduction of this mod-$n$ and the only strict normal subgroup of size more than 2 has index 2
	The average size of these Selmer groups 
	can be interpreted geometrically in terms of possible configurations of $n$ lines on a del Pezzo $1$ surface 
	(see \cite[\S4.1, \S4.4, \S4.5, \S4.6, and \S5.1]{farb2018resolvent} for some tangentially related classical constructions)
	and also in terms of splitting types of rank $n$ projective bundles on $\bp^1$ (similarly to the $n = 3$ case
	carried out in \cite[\S7]{de-jong:counting-elliptic-surfaces-over-finite-fields}).
\end{remark}

\begin{remark}[Generalizations]
	\label{remark:}
	It would be interesting 
	to generalize \autoref{theorem:number-components} 
	to function fields of higher genus curves over finite fields.
	Many ideas for attempting this generalization may be found in \cite{ho-lehung-ngo:average-size-of-2-selmber-groups}.
	Perhaps the most obvious obstruction to this generalization is that our result depends on
	\cite[Theorem 4.9]{de-jongF:on-the-geometry-of-principal-homogeneous-spaces}, which is only stated for elliptic surfaces over $\bp^1_\bc$,
	and not over higher genus curves.
	
	Another possible research direction is be to prove a variant of \autoref{theorem:number-components} 
	for higher dimensional abelian varieties, instead of elliptic curves.
	As a related example, \cite{dao2017average} computes the average size of 
	$2$-Selmer groups of Jacobians of hyperelliptic curves over function fields. 
\end{remark}

\begin{remark}[$q \nmid 2$]
	\label{remark:char-2}
	We explain why we assume $q$ is relatively prime to $2$ in the statement of \autoref{theorem:number-components}.
	In order to prove \autoref{theorem:number-components}, the key is to compute the image of the monodromy representation of \autoref{definition:monodromy-representation}.
	To compare the monodromy representation over $\bc$ to the representation over a finite field, we verify tameness of the associated cover in the proof of
	\autoref{proposition:monodromy-tame}.
	One can verify that the cover is not tamely ramified in characteristic $2$, and so our proof does not immediately extend to characteristic $2$ fields.
\end{remark}

The general approach we take for proving \autoref{theorem:number-components} is to construct an appropriate covering space parameterizing Selmer elements.
We then determine the monodromy group of this covering space well enough to compute the number of components of this space and hence its number of points over a large finite field.
A similar strategy for the Cohen-Lenstra heuristics for torsion in class groups was 
originally taken up in unpublished notes of Jiu Kang Yu, further
developed by
Achter
\cite{Achter:cohenQuadratic,Achter:distributionClassGroups},
and built upon by
Ellenberg, Venkatesh, and Westerland 
\cite{EllenbergVW:cohenLenstra}.

\subsection{An outline of the proof of \autoref{theorem:number-components}}
\label{subsection:prime-idea}

For $k$ a finite field,
we construct an algebraic space $\sspace n d k$ parameterizing pairs $(E, X)$, where $E$ is an elliptic curve over $k(t)$ and $X$
is approximately (but not exactly) an $n$-Selmer element of $E$.
Letting $\espace d k$ denote a parameter space for Weierstrass equations of elliptic curves $E/k(t)$ of height $d$,
there is quasifinite \'etale map
$\sspace n d k \ra \espace d k$
sending $(E,X) \mapsto [E]$.
The key property of $\sspace n d k$ is that
for almost all elliptic curves $E$ over $k(t)$,
there is a bijection between $\sel_n(E)$ and
$k$ points of the fiber $[E]\times_{\espace d k} \sspace n d k$.

Therefore, computing the average size of $n$-Selmer groups in the large $q$ limit
is reduced to computing the ratio $\frac{\# \sspace n d k (k')}{\# \espace d k(k')}$
for sufficiently large finite extensions $k'$ of $k$.
By the Lang-Weil estimate, since $\espace d k$ is geometrically irreducible,
we can show this ratio is $\sum_{m \mid n} m$ by showing $\sspace n d k$ has $\sum_{m \mid n} m$ irreducible components, all of which are geometrically irreducible.

To compute the number of irreducible components of $\sspace n d k$, we show that, over a dense open
$\smespace d k \subset \espace d k$, $\sspace n d k$ is a finite \'etale cover. 
The geometric fibers of the resulting restriction $\smsspace n d k \ra \smespace d k$ are isomorphic to a free $\bz/n\bz$ module
$\vsel n d k$.
Hence, we obtain a monodromy representation (or Galois representation)
$\mono n d k: \pi_1^{\et}(\smespace d k) \ra \gl(\vsel n d k)$.
In fact, there is a certain quadratic form $\qsel n d k$ on $\vsel n d k$ coming from Poincar\'e duality
which is preserved under the monodromy action, and so the map $\mono n d k$ factors though
$\o(\qsel n d k)$.
In fact, $(\vsel n d k, \qsel n d k)$ is naturally the reduction $\bmod n$ of
a quadratic space $(\vsel \bz d \bc, \qsel \bz d \bc)$ over $\bz$.
Let $r_n: \o(\qsel \bz d \bc) \ra \o(\qsel n d \bc)$ denote the reduction $\bmod n$ map.
Over $k = \bc$, it is shown in \cite{de-jongF:on-the-geometry-of-principal-homogeneous-spaces} that
$\im \mono n d \bc$ is contained in $\o(\qsel n d \bc)$ and contains the subgroup $r_n(\osp(\qsel \bz d \bc)) \subset \o(\qsel n d \bc)$ given by the kernel
of the $-1$-spinor norm.
If we knew $\im \mono n d {\ol{\mathbb F}_p} = \im \mono n d \bc$,
the number of geometrically irreducible components
would simply be the number of orbits of this group on its underlying free $\bz/n\bz$ module, which is $\sum_{m \mid n} m$.

Hence, the only task remaining is to show the monodromy over $\ol{\mathbb F}_p$ agrees with that over $\bc$.
This will follow once we show that $\mono n d k$
factors through the tame fundamental group, meaning its ramification orders over boundary divisors in a compactification of $\smespace d k$ are prime to $\chr(k)$.
To set up this transfer of monodromy, we work relatively over $\spec \bz[1/2]$ 
and use a relative version of Abhyankar's lemma.
Abhyankar's lemma allows us to reduce to checking
that the relative boundary divisors over $\bz[1/2]$ are smooth over an open set meeting all fibers over $\spec \bz[1/2]$.
The most difficult boundary divisor to deal with is that parameterizing singular elliptic surfaces,
which can be written down explicitly in terms of equations. 
Hence, verifying the above mentioned smoothness properties boils
down to a concrete calculation using the Jacobian criterion for smoothness.

\subsection{Outline of the Paper}
\label{subsection:outline}

The rest of the paper is devoted to the proof of \autoref{theorem:number-components}.
Along the way, we introduce the $n$-Selmer space, notated $\sspace n d B$ (depending
on a height $d$ and a base scheme $B$), which may be of independent interest.
In \autoref{section:notation} we collect various notation used throughout the paper; 
\autoref{table:notation} may be useful.
In \autoref{section:selmer-space} we define the $n$-Selmer space,
develop its basic properties,
and explain the relation between points of the $n$-Selmer space and
$n$-Selmer groups of elliptic curves.
In \autoref{section:2-3-positive-characteristic} we compute the monodromy of the $n$-Selmer space
over the space of minimal Weierstrass models and
use this to show the $n$-Selmer space has $\sum_{m \mid n} m$ irreducible components, all of which are geometrically irreducible.
We combine our above computations
to prove \autoref{theorem:number-components} in \autoref{section:finishing-proof}.
See \autoref{figure:proof-schematic} for a schematic depiction of how the proof of \autoref{theorem:number-components} fits together.
\begin{figure}
	\centering
\begin{equation}
  \nonumber
  \begin{tikzcd}[row sep = tiny, column sep = 1.7em]
	  \qquad  & & & &  & 
	  \\
	  \qquad  & & \text{Cor.}~\ref{corollary:equality-selmer-fibers} \ar{ld}& \text{Prop.}~\ref{proposition:uniform-bound-selmer-fibers}\ar{l}
	  & \text{Lem.}~\ref{lemma:pic-and-pic0-inequality}\ar{l}
	  &    \\
	\qquad  & \text{Prop.}~\ref{proposition:stacky-selmer-count} \ar{ld}
	& \text{Cor.}~\ref{corollary:uniform-bound-selmer-fibers} \ar{l}  &  \text{Prop.}~\ref{proposition:selmer-space-and-h1} \ar{l}\ar{ul} & \text{Lem.}~\ref{lemma:selmer-ratio-equality} \ar{ul} & \\
	\text{Thm.}~\ref{theorem:number-components}  & & \text{Lem.}~\ref{lemma:algebraic-space-lang-weil} \ar{ul} & &   \text{Prop.}~\ref{lemma:selmer-space-is-almost-scheme}\ar{dl} \ar{ld} &  \text{Lem.}\ref{lemma:selmer-space-normalization} \ar{l} \\
	& \text{Cor.}~\ref{corollary:number-components} \ar{ul} &  \text{Def.}~\ref{definition:monodromy-representation} \ar{d}
	&\text{Cor.}~\ref{corollary:finite-etale-smooth-locus} \ar{l}   & \text{Lem.}~\ref{lemma:constructible-rank} \ar{l}
	&  \text{Lem.}~\ref{lemma:pi-2-smooth} \ar{d}\\
	\qquad  & & \text{Thm.}~\ref{theorem:n-monodromy-image} \ar{ul} & \text{Prop.}~\ref{proposition:monodromy-tame} \ar{l}{\text{\cite{de-jongF:on-the-geometry-of-principal-homogeneous-spaces}}}  \ar{l} & \text{Cor.}~\ref{lemma:2-divisor-complement} \ar{l}  &  \text{Prop.}~\ref{lemma:divii-geom-integral} \ar{l}\\
\end{tikzcd}
\end{equation}
\caption{
A schematic diagram depicting the structure of the proof of \autoref{theorem:number-components}.}
\label{figure:proof-schematic}
\end{figure}

\subsection{Acknowledgements}
I thank Ravi Vakil for guiding me down a fascinating path that eventually led to this
problem.
I thank Anand Deopurkar for pointing out the key reference \cite{de-jongF:on-the-geometry-of-principal-homogeneous-spaces}.
Thanks to 
Tony Feng and Eric Rains
for suggesting numerous improvements and for many illuminating conversations
involving extensions of the results of this paper.
I also thank an anonymous referee for many helpful comments.
Thanks to Arul Shankar for pointing out \autoref{corollary:average-rank}.
I thank
Brian Conrad,
Jordan Ellenberg, 
David Benjamin Lim, Anand Patel,
and
Bogdan Zavyalov
for some extremely valuable discussions.
I would also like to thank 
Manjul Bhargava,
K\k{e}stutis \v{C}esnavi\v{c}ius,
Aise Johan de Jong,
Fran\c{c}ois Greer,
Nikolas Kuhn, 
Bao L\^e H\`ung,
Sun Woo Park,
Eric Rains,
Arpon Raksit,
Zev Rosengarten,
Will Sawin,
Doug Ulmer,
Niudun Wang,
Melanie Wood,
and David Zywina
for helpful interactions.
This material is based upon work supported by the National Science Foundation Graduate Research Fellowship Program under Grant No. DGE-1656518.

\section{Notation}
\label{section:notation}

In this section, we collect various notation used throughout the paper.

\subsection{Notation for height}
\label{subsubsection:height}
We define a notion of height for elliptic curves over function fields, following
\cite[\S4.2-\S4.8]{de-jong:counting-elliptic-surfaces-over-finite-fields}.
Let $k$ be a field with $\chr(k) \neq 2$ and let $E$ be an elliptic curve over $k(t)$.
In this case, 
for 
$i \in \{1,2,3\}$
there is some $d \in \bz$ and
homogeneous polynomials $a_{2i}(s,t) \in k[s,t]$ of degree $2id$ 
so that
$E$ can be expressed in Weierstrass form as
\begin{align}
	\label{equation:weierstrass-form}
	y^2z = x^3 + a_2(s,t)x^2z + a_4(s,t)xz^2 + a_6(s,t)z^3.
\end{align}
By a change of coordinates, we can write $E$ in minimal Weierstrass form,
meaning there is no non-constant polynomial $f \in k[s,t]$ with
$f^{2i} \mid a_{2i}(s,t)$ for all $i \in \{1,2,3\}$.
Up to transformations of the form $x \mapsto x+r(s,t)$ for $r(s,t) \in k[s,t]$ of degree $2d$ and
$(a_2(s,t), a_4(s,t), a_6(s,t)) \mapsto (u^2a_2(s,t), u^4 a_4(s,t), u^6 a_6(s,t))$ for $u \in k^\times$,
elliptic curves over $k(t)$ have a unique such expression
in minimal Weierstrass form.
This follows from the standard procedure for simplifying Weierstrass equations, as described in
\cite[III.3.1]{Silverman2009}.
%in characteristic not 3 see \cite[\S3, Theorem 1]{mumfordS:introduction-to-the-theory-of-moduli}.
For $E$ written in minimal Weierstrass form,
the {\em discriminant} of $E$ is 
$\disc(E) := -16(4a_2(s,t)^3a_6(s,t) - a_2(s,t)^2 a_4(s,t)^2 + 4a_4(s,t)^3 + 27 a_6(s,t)^2 - 18a_2(s,t)a_4(s,t)a_6(s,t))$.
We define the {\em height} of $E$ as $h(E) := d = \deg \disc(E)/12$.
Note that $\disc(E)$ depends on the choice of Weierstrass form for $E$, but two different choices of minimal Weierstrass
form will yield two discriminants with the same degree, so $h(E)$ is an intrinsic invariant of $E$.

\subsection{Group theory notation}
\label{subsubsection:group-notation}

Let $V$ be a rank $s$ free module over a commutative ring $R$ with unit.
We let $\gl(V)$ or $\gl_{s}(R)$ denote the group of 
invertible $R$-homomorphisms $V \ra V$. 
If $V$ is a finite rank free module over $R$ with a quadratic form
$q$, let
$\o(q) \subset \gl(V)$ denote the associated orthogonal group. 

Suppose that $R = \bz$, $\varepsilon \in \{ \pm 1\}$, and
$(q,V)$ is a unimodular lattice, meaning $B_q$, viewed as a linear transformation $V \ra V^\vee$ is invertible.
Following \cite[\S5.1]{ebeling:monodromy-groups-isolateda},
define $\o^*_\varepsilon(q) \subset \o(q)$ to be the subset of those elements
$g \in \o(q)$ so that for any expression
$g = r_{v_1} \cdots r_{v_i}$ there is an even number of indices
$i$ with $\varepsilon q(v) <0$.
The group $\o^*_\varepsilon(q)$ is also known as the kernel of the $\varepsilon$-spinor norm.

%Let $\sp_q: \o(q) \ra H^1(\spec R, \mu_2) \simeq R^\times/\left( R^\times \right)^2$ denote the {\em spinor norm} (where we assume $2$ is invertible on $R$ if $\rk V$ is odd), see 
%\cite[\S C.5]{conrad:reductive-group-schemes} and in particular the paragraphs preceding \cite[Example C.5.5]{conrad:reductive-group-schemes} for
%a general definition.

%In the case $R = k$ is a field (with the exception $k = \mathbb F_2$ and $s = 4$), this is characterized by the property that for $v$ non-isotropic, $\sp_q(r_v) = \ol{q(v)}$, where $r_v$ denotes the reflection about $v$ and $\ol{q(v)}$ denotes the image of $q(v)$ in $R^\times/\left( R^\times \right)^2$.
%Define $\osp(q) \subset \o(q)$ as the kernel of the spinor norm.

\subsection{General notation throughout the paper}
\label{subsubsection:general-notation}
We collect some notation we shall use throughout the paper.
We will use $n$ as the integer indexing the Selmer group $\sel_n$, i.e., we work with the $n$-Selmer group.
For defining parameter spaces of elliptic curves we will work over a base scheme $B$ on which $2$ is invertible.
For defining parameter spaces of $n$-Selmer elements, we will further assume $n$ is invertible on $B$.
We often take $B$ to be $\spec k$ for $k$ a field, in which case we typically assume $\chr(k) \nmid 2n$ unless otherwise specified.
We use $d$ to denote the height of various elliptic surfaces, so that a minimal Weierstrass equation 
is of the form
$y^2 z = x^3 + a_2(s,t)x^2z + a_4(s,t)xz^2 + a_6(s,t)z^3$
for an elliptic curve
$E$
over $k(t)$ as in \eqref{equation:weierstrass-form}.
Here, for $i \in \{1,2,3\}$, $\deg a_{2i}(s,t) = 2id$ as 
homogeneous polynomials in $k[s,t]$.

For $X \ra Y$ and $Z \ra Y$ two maps, we notate $X_Z := X \times_Y Z$. When $Z = \spec R$ for $R$ a ring, we also notate
$X_R := X_{\spec R}$.
Similarly, while many objects throughout the paper are indexed by a base scheme $B$ (see those in \autoref{table:notation}),
if $B = \spec R$, we index them by $R$ instead. So, for example, we use $\espace d R$ (defined in \autoref{definition:weierstrass-family}) to mean $\espace d {\spec R}$,
and similarly for the other constructions in \autoref{table:notation}.

Throughout we use $H^i$ to denote \'etale cohomology, unless otherwise specified.
On certain occasions we will need both \'etale and group cohomology, which we will then notate
via subscripts $\et$ and $\grp,$.
Similarly, by $\pi_1$ we mean the \'etale fundamental group.
On occasion we will need the topological fundamental group, which we then notate as $\pi_1^{\top}$, and the tame
fundamental group, which we notate as $\pi_1^{\tame}$ (see \autoref{subsubsection:tame}). On these occasions,
we will notate the \'etale fundamental group as $\pi_1^{\et}$.

For $K$ a global field, we let $\places K$ denote the places of $K$.
We let $K_v$ denote the completion of $K$ at $v \in \places K$.
By {\em global function field}, we mean the fraction field of a smooth geometrically integral curve over $\mathbb F_q$.
Recall that when $K$ is a global function field, the elements of $\places K$ are in bijection with
the closed points of the smooth proper curve $C$ whose function field is $K$.
If $X$ is an integral ring or scheme, we let $K(X)$ denote its fraction field.
For $R$ a local ring, 
we let $R^\sh$ denote its strict henselization.

\subsection{Elliptic curves}
\label{subsubsection:elliptic-curve-notation}
	Let $B$ be a scheme with $2n$ invertible on $B$ and let $d \in \bz_{\geq 0}$.
	In \autoref{definition:weierstrass-family} and \autoref{definition:selmer-space} we define a scheme $\espace d B$ and an
	algebraic space $\sspace n d B$.
	These are parameter spaces for minimal Weierstrass models and elements of Selmer groups respectively.
	There is a natural map $\pi: \sspace n d B \ra \espace d B$.
Given a point (or geometric point) $x \in \espace d B$ 
	we let $E_x$ denote the elliptic curve corresponding to the point $x$,
	$\sce_x$ denote the N\'eron model of $E_x$ over $\bp^1_x$, $\sce_x^0$ denote its identity component,
	and $f_x : W_x \ra \bp^1_x$ denote the minimal Weierstrass model.
	See the proof of \autoref{lemma:selmer-space-stalks} for various relations between these objects.

	Let $g_x: \bp^1_x \ra x$ denote the structure map.
	For $x$ a point, we use $\ol{x}$ to denote a corresponding geometric point.
	So, if $x = \spec k,$ we let $\ol{x} = \spec \ol{k}$.
	Then, $E_{\ol x}$ denotes the corresponding elliptic curve over $\ol x$ and so on.

We use Kodaira's notation for types of singular fibers in minimal regular proper models of elliptic curves, see
\cite[IV.9, Table 4.1]{Silverman:Advanced}.

\subsection{Selmer groups}
\label{subsubsection:selmer-notation}

Let $E$ be an elliptic curve over a global function field $K$.
We let $\sel_n(E) := \ker\left(H^1(K, E[n]) \ra \prod_{v \in \places K} H^1(K_v, E)\right)$
denote the $n$-Selmer group of $E$.
%\begin{remark}
%	\label{remark:}
%	Letting $K_v$ denote the completion of $K$ at $v \in \places K$,
%	some prefer to define $\sel_n(E)$ as
%	$\ker(H^1(K, E[n]) \ra \prod_{v \in \places K} H^1(K_v, E))$.
%	This agrees with our definition above
%	because for every $v \in \places K$,
%	the map $H^1(K^{\h}_v, E)\rightarrow H^1(K_v, E)$ is injective by \cite[\S3.6 Corollary 10]{BoschLR:Neron}.
%\end{remark}

\subsection{Tame fundamental group}
\label{subsubsection:tame}

We recall the definition of the tame fundamental group of a relative curve over a discrete valuation ring or field, following \cite[p. 9]{orgogozo2000theoreme} (see also \cite[Expos\'e XIII, 2.1.3]{noopsortSGA1Grothendieck1971}).
For $S$ a discrete valuation ring or field,
let $X \ra S$ be a regular relative curve, $E \subset X$ a divisor \'etale over $S$, and $V := X - E$.
Let $F \ra V$ be a finite \'etale cover and let $\ol{F}$ denote the normalization of $X$ along $F \ra V$.
We say $F \ra V$ is {\em tame} if the ramification orders $\ol{F}$ over $E$ are invertible on $S$.
We then define $\pi_1^{\tame}(V)$ as the profinite group whose finite quotients $G$ correspond to tame finite \'etale Galois-$G$ covers of $V$.
In particular, $\pi_1^{\tame}(V)$ is a quotient of $\pi_1^{\et}(V)$.

\subsection{Summary of notation introduced in the paper}
\label{subsubsection:notation-spaces}

For the reader's convenience, in \autoref{table:notation}
we collect notation introduced throughout the paper,
roughly in order of appearance.
\begin{figure}
	\centering
\scalebox{0.7}{
	\begin{tabular}{|l|l|l|}
		\hline
		Notation & Description & Location defined \\\hline
		$\aff d B$ & The affine space parameterizing the coefficients of $a_2, a_4$, and $a_6$ in the Weierstrass equation & \autoref{definition:weierstrass-family} \\ \hline
		$\espace d B$ & A parameter space of Weierstrass equations over $\bp^1$ of height $d$  & \autoref{definition:weierstrass-family} \\ \hline
		$\smespace d B$ & The open set of $\espace d B$ of smooth minimal Weierstrass models & \autoref{definition:smooth-espace} \\ \hline
		$\uespace d B$ & The universal family of Weierstrass models over $\espace d B$  & \autoref{definition:weierstrass-family} \\ \hline
		$\usmespace d B$ & The universal family of Weierstrass models over $\smespace d B$ & \autoref{definition:smooth-espace} \\ \hline
		$\sspace n d B$ & The $n$-Selmer space of height $d$ & \autoref{definition:selmer-space} \\ \hline
		$\smsspace n d B$ & The open subspace of $\sspace n d B$ given by restricting to $\smespace d B$ & \autoref{definition:selmer-space} \\ \hline
		$\ssheaf n d B$ & The sheaf on $\espace d B$ represented by $\sspace n d B$ & \autoref{lemma:representability-sspace} \\ \hline
		$\smssheaf n d B$ & The sheaf on $\smespace d B$ represented by $\smsspace n d B$ & \autoref{definition:smooth-espace} \\ \hline
		$\estack d B$ & The moduli stack of minimal Weierstrass models, a quotient of $\espace d B$ by an algebraic group & \autoref{definition:selmer-stack} \\ \hline
		$\smestack d B$ & The open substack of $\estack d B$ parameterizing smooth Weierstrass models & \autoref{definition:selmer-stack} \\ \hline
		$\sstack n d B$ & The $n$-Selmer stack of height $d$, a quotient of $\sspace n d B$ by an algebraic group & \autoref{definition:selmer-stack} \\ \hline
		$\smsstack n d B$ & The open substack of $\sstack n d B$ given by restricting to $\smestack d B$ & \autoref{definition:selmer-stack} \\ \hline
	$\comp E v$ & The group of rationally defined components of the N\'eron model of $E$ at a closed point $v$ & \autoref{definition:component-group} \\ \hline
		$\fcomp E$ & The product of $\comp E v$ over all closed points $v$ & \autoref{definition:component-group} \\ \hline
		$\mono n d B$ & The monodromy representation associated to $\smsspace n d B \ra \smespace d B$ & \autoref{definition:monodromy-representation} \\ \hline
		$\vsel n d B$ & The free $\bz/n\bz$ module corresponding to the geometric generic fiber of $\sspace n d B \ra \espace d B$ & \autoref{definition:vsel} \\ \hline
	$\qsel n d B$ & The quadratic form on $\vsel n d B$ respected by the geometric monodromy & \autoref{theorem:n-monodromy-image} \\ \hline
	$\divsing d B$ & The divisor in $\aff d B$ parameterizing singular elliptic surfaces & \autoref{definition:divii} \\ \hline
	\end{tabular}
}
	\vspace{.5cm}
	\caption{Notation introduced in the paper.}
	\label{table:notation}
\end{figure}

\section{The $n$-Selmer space}
\label{section:selmer-space}

We define the $n$-Selmer space and $n$-Selmer stack in \autoref{subsection:selmer-space-definition}
and
prove various properties of the $n$-Selmer space in \autoref{subsection:selmer-space-properties}.
In 
\autoref{subsection:selmer-space-points},
we relate points of the $n$-Selmer space to elements of Selmer groups of elliptic curves.

\subsection{Defining the $n$-Selmer space}
\label{subsection:selmer-space-definition}

Our next goal is to define the $n$-Selmer space and $n$-Selmer stack, which is accomplished in
\autoref{definition:selmer-space} and \autoref{definition:selmer-stack}.
We will realize the $n$-Selmer space as a cover of a parameter space for minimal Weierstrass equations of elliptic curves.
\subsubsection{Motivation for the definition of the $n$-Selmer space}
\label{subsubsection:motivation-selmer-space}
The motivation for our definition of the Selmer space is as follows.
Let $E$ be an elliptic curve over $k(t)$
with N\'eron model $\sce$.
Then, the $n$-Selmer group of $E$ is closely connected to $H^1(\bp^1, \sce^0[n])$ via results discussed in \autoref{subsection:selmer-space-points} below.
%(see also \cite[Proposition 5.4(c)]{cesnavicius:selmer-groups-as-flat-cohomology-groups}, where the condition there on real $v$ is vacuously satisfied).
So, we will cook up a sheaf over the parameter space of height $d$ elliptic curves whose stalk over a point corresponding to $E$ is $H^1(\bp^1_{\ol k}, \sce_{\ol k}^0[n])$.

In order to define the $n$-Selmer space, 
we now define the relevant parameter space of Weierstrass equations of height $d$.

\begin{definition}
	\label{definition:weierstrass-family}
	Throughout this definition we work relatively over a base scheme $B$ on which $2$ is invertible.
	Define $\bp^1_B := \proj_B \sco_B[s,t]$.
	Form the affine space,
	\begin{align*}
		\aff d B := \spec_B \sco_B[a_{2,0}, a_{2,1} \ldots, a_{2,2d}, a_{4,0}, \ldots, a_{4,4d}, a_{6,0} \ldots, a_{6,6d}].
	\end{align*}
	For $i \in \{1,2,3\}$, define $a_{2i}(s,t) := \sum_{j=0}^{2id} a_{2i,j} t^j s^{2id-j}$.
	Let $\espace d B \subset \aff d B$ denote the open subscheme parameterizing those points so that the Weierstrass equation $y^2z = x^3 + a_2(s,t)x^2z + a_4(s,t)xz^2 + a_6(s,t)z^3$ is a minimal
	Weierstrass equation.
	This is open as it corresponds to those points $(a_{2,0}, \ldots, a_{2,2d}, a_{4,0}, \ldots, a_{4,4d}, a_{6,0} \ldots, a_{6,6d})$
	so that there is no point $p \in \bp^1_B$ with $\val_p\left(a_{2i}(s,t)\right) \geq 2id$ for all $i \in \{1,2,3\}$.
	
We next construct a family of minimal Weierstrass models over
$\espace d B$.
Consider the projective bundle $\proj_{\bp^1_B \times_B \espace d B} \sym^\bullet \sck$ with
	\begin{align}
		\label{equation:projective-bundle}
		\sck := \sco_{\bp^1_B \times_B \espace d B} \oplus \sco_{\bp^1_B \times_B \espace d B}(-2d) \oplus \sco_{\bp^1_B \times_B \espace d B}(-3d).
	\end{align}
	Let $z,x$, and $y$ denote the generators of the first, second, and third summands of $\sck$.
	Let $\uespace d B$
denote the subscheme cut out of $\proj_{\bp^1_B \times_B \espace d B} \sym^\bullet \sck$ by 
the ideal sheaf generated by the equation $y^2z = x^3 + a_2(s,t)x^2z + a_4(s,t)xz^2 + a_6(s,t)z^3$ in $\sym^3(\sck)$. Then, $\uespace d B$
is the family of height $d$ Weierstrass models over $\espace d B$.
\end{definition}
We can summarize the setup of \autoref{definition:weierstrass-family} by the diagram
\begin{equation*}
	\label{equation:espace-diagram}
	\begin{tikzcd} 
		\uespace d B \ar[hookrightarrow]{r} \ar{d}{f} & \proj_{\bp^1_B \times_B \espace d B} \sym^\bullet \sck \ar {dl}  \\
		\bp^1_B \times_B \espace d B \ar {d}{g} & \\
		\espace d B \ar[hookrightarrow] {r} & \aff d B.
\end{tikzcd}\end{equation*}

Suitably motivated to define the $n$-Selmer space over $\espace d B$ by \autoref{subsubsection:motivation-selmer-space},
we next construct a sheaf $\ssheaf n d B$ which is represented by the $n$-Selmer space.
\begin{lemma}
	\label{lemma:representability-sspace}
	Let $B$ be a scheme with $2n$ invertible on $B$, and $d \geq 0$.
	Let $\uespace d B \xra{f} \bp^1_B \times_B \espace d B \xra{g} \espace d B$ denote the projection maps, and define
	$\ssheaf n d B := R^1 g_* (R^1 f_* \mu_n)$.
	Then $\ssheaf n d B$ is a constructible sheaf of $\bz/n\bz$ modules
	whose formation commutes with arbitrary base change on $\espace d B$.
	Further, $\ssheaf n d B$ is represented by an algebraic space
	which is \'etale, quasi-separated, and of finite type over $\espace d B$.
\end{lemma}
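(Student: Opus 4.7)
The plan is to establish the lemma in three steps: first, constructibility of $R^1 f_* \mu_n$ on $\bp^1_B \times_B \espace d B$; second, constructibility of the resulting sheaf $R^1 g_* R^1 f_* \mu_n = \ssheaf n d B$ on $\espace d B$; and third, upgrading constructibility to representability by an algebraic space of the required flavor. The first two steps should be essentially automatic from the constructibility and proper base change theorems for torsion \'etale cohomology, while the third step, which I expect to be the main obstacle, relies on a standard representability result for constructible \'etale sheaves of finite abelian groups.

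For the first step, I would observe that $f$ is the composition of the closed immersion of $\uespace d B$ into the projective bundle $\proj_{\bp^1_B \times_B \espace d B} \sym^\bullet \sck$ from \eqref{equation:projective-bundle} with the projection to $\bp^1_B \times_B \espace d B$; in particular, $f$ is projective, hence proper. Since $n$ is invertible on $B$, the finiteness theorem for higher direct images under proper morphisms then yields that $R^i f_* \mu_n$ is a constructible sheaf of $\bz/n\bz$-modules, and the proper base change theorem shows that its formation commutes with arbitrary base change on $\espace d B$. Applying the same two theorems to the proper morphism $g$ with coefficients in the constructible sheaf $R^1 f_* \mu_n$ gives that $\ssheaf n d B$ is itself constructible with formation commuting with base change. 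If $B$ is not Noetherian, I would first reduce to the Noetherian case by writing $B$ as a cofiltered limit of Noetherian schemes, using that $\espace d B$ and $\uespace d B$ are of finite presentation over $B$.

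For the third step, I will appeal to the fact that a constructible sheaf of finite abelian groups on a Noetherian scheme $S$ is represented by an algebraic space that is \'etale, quasi-separated, and of finite type over $S$. The mechanism is to stratify $S$ into finitely many locally closed subschemes on which the sheaf becomes locally constant; on each stratum the restricted sheaf corresponds to a finite \'etale cover endowed with a group structure, and these local representations glue via effective descent along the stratification into an algebraic space over $S$. Finiteness of the stalks together with finiteness of the stratification ensures finite type and quasi-separation (the diagonal being an \'etale closed immersion stratum by stratum), while local constancy on the strata ensures \'etaleness over $S$. This final representability input is the heart of the lemma; with it in hand, the conclusion is immediate.
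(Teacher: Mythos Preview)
Your proposal is correct and follows essentially the same route as the paper: constructibility and compatibility with base change come from the proper base change theorem applied to the proper maps $f$ and $g$, and representability by an \'etale, quasi-separated, finite-type algebraic space is a standard fact about constructible \'etale sheaves (the paper cites \cite[V, Theorem 1.5, p.~157(b), and Proposition 1.8]{Milne:etaleBook} rather than sketching the stratification argument you outline). Your stratification sketch is a reasonable way to see why those references give what is needed, though the precise mechanism in Milne is slightly different; either way the content is the same.
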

\begin{proof}
This holds by proper base change together with standard facts about the algebraic space representing an \'etale sheaf
(see \cite[V, Theorem 1.5]{Milne:etaleBook}, \cite[p. 157, point (b)]{Milne:etaleBook} for quasi-separatedness, and \cite[V, Proposition 1.8]{Milne:etaleBook}).
\end{proof}
%\begin{proof}
%	Since $f$ and $g$ are both proper maps, 
%	it follows from proper base change that 
%	$\ssheaf n d B$ is constructible and its formation commutes with arbitrary base change.
%	It remains to verify 
%	$\ssheaf n d B$ is represented by an algebraic space
%	which is \'etale, quasi-separated, and of finite type.
%	First, 
%	\cite[V, Theorem 1.5]{Milne:etaleBook} 
%	guarantees representability of $\ssheaf n d B$ by an algebraic space on the small \'etale site of $\espace d B$.
%	Then, commutation of the formation of $\ssheaf n d B$
%	with base change implies representability on 
%	the big \'etale site of $\espace d B$.
%	Also, $\ssheaf n d B$ is \'etale and quasi-separated over $\espace d B$, again by 
%	\cite[V, Theorem 1.5]{Milne:etaleBook} (where all algebraic spaces are assumed to be quasi-separated \cite[p. 157, point (b)]{Milne:etaleBook}).
%	Finally, since $\ssheaf n d B$ is constructible, the algebraic space
%	representing it has finite type by 
%	\cite[V, Proposition 1.8]{Milne:etaleBook}.
%\end{proof}

Using \autoref{lemma:representability-sspace}, we can now define the $n$-Selmer space.
\begin{definition}[The Selmer space]
	\label{definition:selmer-space}
	For $B$ a scheme with $2n$ invertible on $B$,
	define the {\em $n$-Selmer space over $B$ of height $d$},
	denoted $\sspace n d B$,
	to be the algebraic space representing the sheaf
$\ssheaf n d B$, as defined in \autoref{lemma:representability-sspace}.
\end{definition}

Points of the Selmer space do not quite correspond bijectively to Selmer elements. 
The main source of this discrepancy is due to the fact that isomorphism classes of elliptic curves
appear multiple times in $\espace d B$, as a given isomorphism class of elliptic curve has many Weierstrass equations.
In order to fix this discrepancy, we now introduce the Selmer stack.
Although it would be possible to prove our main result, \autoref{theorem:number-components}, only using the Selmer space,
we find it cleaner to introduce the Selmer stack, whose points correspond more closely to Selmer elements.
The Selmer space is a smooth cover of the Selmer stack of relative dimension $2d + 2$. 
For the remainder of the paper we will work almost exclusively with
the Selmer space, only interacting with the Selmer stack at the very end of the proof in \autoref{section:finishing-proof}.
The reader interested in understanding the proof of \autoref{theorem:number-components}
but unfamiliar with stacks can safely ignore the Selmer stack without detracting from their understanding of the proof.

\begin{definition}[The Selmer Stack]
	\label{definition:selmer-stack}
	Retain notation from \autoref{definition:weierstrass-family}.
	Let $d \geq 0$ and $B$ be a scheme with $2n$ invertible on $B$.
	We next construct a $\bg_a^{2d+1} \rtimes \bg_m$ action on $\uespace d B$.
	This action is given on Weierstrass equations as follows. Viewing an element $r \in \bg_a^{2d+1}$ as a homogeneous degree $2d$ polynomial in $s$ and $t$ whose coefficients are parameterized by 
	$\bg_a^{2d+1}$, $r$ sends
	\begin{align*}
		\resizebox{1.0\linewidth}{!}{
  \begin{minipage}{\linewidth}
	\begin{align*}
		y^2 z = x^3 + a_2(s,t)x^2z + a_4(s,t)xz^2 + a_6(s,t)z^3 \mapsto y^2 z = (x+r)^3 + a_2(s,t)(x + r)^2 z + a_4(s,t)(x + r)z^2 + a_6(s,t)z^3,
	\end{align*}
  \end{minipage}
}
\end{align*}
where one then simplifies the right hand side to determine the coefficients $a_{2i,j}$.
	The action of $\lambda \in \bg_m$ is given by 
\begin{align*}
		\resizebox{1.0\linewidth}{!}{
  \begin{minipage}{\linewidth}
	\begin{align*}
		y^2 z = x^3 + a_2(s,t)x^2z + a_4(s,t)xz^2 + a_6(s,t)z^3 \mapsto y^2 z = x^3 + \lambda^{2} a_2(s,t)x^2 z + \lambda^{4} a_4(s,t)xz^2 + \lambda^{6}a_6(s,t)z^3.
	\end{align*}
  \end{minipage}
}
\end{align*}
	
This $\bg_a^{2d +1} \rtimes \bg_m$ action on $\uespace d B$ induces actions on $\bp^1_B \times_B \espace d B$ and $\espace d B$ respecting the projection maps 
$\uespace d B \xra{f} \bp^1_B \times_B \espace d B \xra{g} \espace d B$.
It therefore induces an action on $\ssheaf n d B = R^1 f_* (R^1 g_* \mu_n)$ and hence an action on $\sspace n d B$.
We define the {\em moduli stack of height $d$ minimal Weierstrass models over $B$} as the quotient stack $\estack d B := \left[ \espace d B/ \bg_a^{2d+1} \rtimes \bg_m \right]$.
We define the {\em $n$-Selmer stack over $B$ of height $d$} as the quotient stack $\sstack n d B := \left[ \sspace n d B/ \bg_a^{2d+1} \rtimes \bg_m \right]$.
Since the action of $\bg_a^{2d+1} \rtimes \bg_m$ restricts to an action on $\usmespace d B$, (as defined later in \autoref{definition:smooth-espace},) we similarly define
$\smestack d B := \left[ \smespace d B/ \bg_a^{2d+1} \rtimes \bg_m \right]$
and
$\smsstack n d B := \left[ \smsspace n d B/ \bg_a^{2d+1} \rtimes \bg_m \right]$.
\end{definition}
\begin{remark}
	\label{remark:}
	The $\mathbb F_q$ points of the $n$-Selmer stack of height $d$ can be given a modular interpretation as certain pairs 
	$(f: Y \ra \bp^1, D)$
	where $f: Y \ra \bp^1$ is a proper flat map with smooth generic fiber and geometrically irreducible fibers,
	and $D \subset Y$ a Cartier divisor of relative degree $n$, satisfying additional conditions with various equivalences.
	We will not need this, so we do not precisely formulate the interpretation, but one can deduce it from the proof of
	\autoref{proposition:selmer-space-and-h1}	
	and \cite[Proposition 1.7]{artinSD:the-shafarevich-tate-conjecture-for-pencils-of-elliptic-curves} 
	(see also the somewhat more precise formulation in \cite[Lemma 4.2]{de-jongF:on-the-geometry-of-principal-homogeneous-spaces}). 
\end{remark}
\begin{remark}
	\label{remark:finite-stabilizers}
	The Selmer stack is always a smooth algebraic stack by 
	\cite[Example 8.1.12]{olsson2016algebraic},
	though it will fail to be Deligne-Mumford in characteristic $3$. 
	For example, in characteristic $3$,
	the $\bg_m$ action will have $\mu_3$ stabilizers on
	points corresponding to curves
	of the form $y^2z = x^3 + a_6(s,t)z^3$.
	Further, the automorphism group schemes of any point of the Selmer stack are always finite,
	as one can check via explicit computations in terms of Weierstrass equations.
	For example, the degree of the \'etale part of the stabilizer is computed in
	\cite[Proposition 1.2]{Silverman:AEC}
	and one can use similar computations to check the connected part is finite.
\end{remark}

\subsection{Properties of the Selmer space}
\label{subsection:selmer-space-properties}

In this subsection, we prove various properties of the Selmer space.
In \autoref{subsubsection:stalks} we describe the geometric fibers of $\sspace n d B$ over $\espace d B$.
In \autoref{subsubsection:opens}, we define an open subset $\smespace d B \subset \espace d B$,
fiberwise dense over $B$,
over which the Selmer space will be a finite \'etale cover of degree $n^{12d-4}$.
In \autoref{subsubsection:schematic}, we show the Selmer space is a separated scheme over $\smespace d k$.
In \autoref{subsubsection:finiteness} we show the Selmer space is finite over $\smespace d k$.

%\subsubsection{Dimension of the Selmer space}
%\label{subsubsection:dimension}
%We can easily determine the dimension of $\sspace n d k$.
%
%\begin{lemma}
%	\label{lemma:dimension-of-selmer-space}
%	For every field $k$ with $\chr(k)$ prime to $6n$, $\dim \sspace n d k = \dim \espace d k = 12d+3$.
%\end{lemma}
%\begin{proof}
%	We know $\dim \espace d k = 12d+3$ because it is an open set in the $12d+3$ dimensional affine space
%	$\aff d k$ from \autoref{definition:weierstrass-family}.
%	The dimension of $\sspace n d k$ is also $12d+3$ as it is an \'etale cover of $\espace d k$.
%\end{proof}

\subsubsection{Geometric fibers of the Selmer space}
\label{subsubsection:stalks}

We next describe the geometric fibers of $\sspace n d B \ra \espace d B$ in terms of various cohomology groups in \autoref{lemma:selmer-space-stalks}.
\begin{lemma}
	\label{lemma:selmer-space-stalks}
	Let $k$ be a field and $E$ an elliptic curve over $k(t)$.
	Let
	$h: X \ra \bp^1_k$ be the associated minimal regular proper model, $f: W \ra \bp^1_k$ be the associated minimal
	Weierstrass model, and $\sce$ be the associated N\'eron model over $\bp^1_k$.
	Let $W^{\sm}$ denote the smooth locus in $W$ of the map $f$.
	\begin{enumerate}
		\item We have isomorphisms 
			\begin{align*}
				R^1f_* \mu_n \simeq W^{\sm}[n] \simeq \pic_{W/\bp^1_k}^0[n]  \simeq \sce^0[n]\simeq \pic_{X/\bp^1_k}^0[n].
			\end{align*}
			If the total space of $W$ is smooth, the above are all isomorphic to $\sce[n]$.
		\item If $E$ corresponds to a point $x \in \espace d k$,
		letting $\ol{x}$ denote a geometric point over $x$, the geometric fiber of $\sspace n d B$ at $\ol x$ is isomorphic to 
	\begin{align*}
		H^1(\bp^1_{\ol x}, R^1 (f_{\ol x})_* \mu_n) &\simeq H^1(\bp^1_{\ol x}, W^{\sm}_{\ol x}[n]) 
		\simeq H^1(\bp^1_{\ol x}, \pic^0_{W_{\ol x}/\bp^1_{\ol x}}[n]) \\
		&\simeq H^1(\bp^1_{\ol x}, \sce_{\ol x}^0[n])
		\simeq H^1(\bp^1_{\ol x}, \pic_{X_{\ol x}/\bp^1_{\ol x}}^0[n]).
	\end{align*}
	If the total space of $W$ is smooth, the above are all isomorphic to $H^1(\bp^1_{\ol x} , \sce_{\ol x}[n])$.
	\end{enumerate}
\end{lemma}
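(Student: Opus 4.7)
The plan is to establish part (1) as a chain of canonical isomorphisms of sheaves on $\bp^1_k$, and then deduce part (2) by proper base change applied to $g$. First, for $R^1 f_* \mu_n \simeq \pic^0_{W/\bp^1_k}[n]$, I would push forward the Kummer sequence $1 \to \mu_n \to \mathbb G_m \to \mathbb G_m \to 1$ from $W$ along $f$. Since the fibers of a minimal Weierstrass equation are always geometrically integral (the singular Kodaira types that can arise are $I_1$ and $II$, both irreducible), we have $f_* \sco_W = \sco_{\bp^1_k}$, hence $f_* \mathbb G_m = \mathbb G_m$ and $R^1 f_* \mathbb G_m = \pic_{W/\bp^1_k}$. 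Irreducibility of fibers forces $\pic = \pic^0$, and the resulting long exact sequence identifies $R^1 f_* \mu_n$ with $\pic^0_{W/\bp^1_k}[n]$.

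Next, for $W^{\sm} \simeq \pic^0_{W/\bp^1_k}$, I would use the Abel--Jacobi morphism $p \mapsto \sco_W(p - \infty)$, where $\infty$ is the identity section; on each fiber this is the classical isomorphism between the smooth locus of an integral arithmetic genus-one curve and its generalized Jacobian. The identification $\pic^0_{W/\bp^1_k} \simeq \sce^0$ is N\'eron's theorem that the smooth locus of the minimal Weierstrass model of an elliptic curve over a Dedekind base is canonically the identity component of the N\'eron model (e.g.\ Bosch--L\"utkebohmert--Raynaud, \emph{N\'eron Models}, \S1.5 and \S9.4). Finally, $\sce^0 \simeq \pic^0_{X/\bp^1_k}$ follows because the minimal desingularization $X \to W$ contracts only those components of singular fibers that do not meet the identity section, so the two Picard schemes share the same identity component. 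When $W$ is smooth, the N\'eron model is $W$ itself with trivial relative component group, so $\sce = \sce^0 = W$ and the additional identification with $\sce[n]$ is automatic.

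For part (2), recall from \autoref{lemma:representability-sspace} that $\sspace n d k$ represents $R^1 g_*(R^1 f_* \mu_n)$. Applying proper base change twice, first to $f$ to see that $(R^1 f_* \mu_n)|_{\bp^1_{\ol x}} = R^1 (f_{\ol x})_* \mu_n$, and then to $g$ to compute the stalk of $R^1 g_*$ at $\ol x$ as $H^1(\bp^1_{\ol x}, \cdot)$, produces the first isomorphism. The remaining identifications in part (2) are obtained by applying $H^1(\bp^1_{\ol x}, \cdot)$ to the base-changed isomorphisms from part (1). The main obstacle is ensuring that each identification in part (1) is genuinely a morphism of sheaves of groups on $\bp^1_k$ rather than a collection of fiberwise identifications; in particular that $W^{\sm} \simeq \sce^0$ is realized functorially over the whole base, which reduces to the standard behavior of minimal Weierstrass models under localization at closed points of $\bp^1_k$, the heart of N\'eron's construction for elliptic curves.
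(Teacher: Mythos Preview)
Your overall strategy matches the paper's: assemble part~(1) from known identifications (the paper cites Artin--Swinnerton-Dyer, Silverman, and Bosch--L\"utkebohmert--Raynaud rather than rederiving them), and obtain part~(2) by proper base change. However, two of your justifications in part~(1) are wrong as stated, even though the conclusions survive.

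First, the parenthetical ``the singular Kodaira types that can arise are $\mathrm{I}_1$ and $\mathrm{II}$'' is false for a general minimal Weierstrass model: the associated minimal regular proper model can have \emph{any} Kodaira type. You are conflating the general case with the special hypothesis ``the total space of $W$ is smooth,'' which is precisely the extra assumption under which the lemma asserts the additional isomorphism with $\sce[n]$. The fibers of $f$ are nonetheless geometrically integral for the much simpler reason that each is a plane cubic $y^2z = x^3 + \cdots$, and a Weierstrass cubic (nonzero cubic in $x$) is always irreducible.

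Second, ``irreducibility of fibers forces $\pic = \pic^0$'' is false: for an integral projective curve one has $\pic/\pic^0 \simeq \bz$ via the degree map. What you actually need, and what is true, is $\pic_{W/\bp^1_k}[n] = \pic^0_{W/\bp^1_k}[n]$, which follows because multiplication by $n$ is injective on $\bz$. Your Kummer-sequence argument then gives $R^1 f_* \mu_n \simeq \pic_{W/\bp^1_k}[n] = \pic^0_{W/\bp^1_k}[n]$, which is correct. The paper simply cites \cite[(1.1)--(1.3)]{artinSD:the-shafarevich-tate-conjecture-for-pencils-of-elliptic-curves} for this step and \cite[\S9.5, Theorem 4(b)]{BoschLR:Neron} for $\sce^0 \simeq \pic^0_{X/\bp^1_k}$; your sketch of the latter via ``the two Picard schemes share the same identity component'' is too vague to stand on its own, so a citation is the right move there as well.
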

\begin{proof}
	We first prove (1).
	The isomorphism $R^1f_* \mu_n \simeq W^{\sm}[n]$ is given in \cite[(1.3)]{artinSD:the-shafarevich-tate-conjecture-for-pencils-of-elliptic-curves}.
%	this essentially follows from the Kummer exact sequence
	Next, we claim that $\pic_{W/\bp^1_k}^0 \simeq W^{\sm} \simeq \sce^0\simeq \pic_{X/\bp^1_k}^0 ,$ which will imply the corresponding isomorphisms on $n$-torsion.
	The isomorphism $\pic_{W/\bp^1_k}^0 \simeq W^{\sm}$ is shown in \cite[(1.1) and (1.2)]{artinSD:the-shafarevich-tate-conjecture-for-pencils-of-elliptic-curves},
	$W^{\sm} \simeq \sce^0$ is shown in \cite[Corollary 9.3]{Silverman:Advanced},
	and $\sce^0 \simeq \pic_{X/\bp^1_k}^0$ is shown in \cite[\S9.5, Theorem 4(b)]{BoschLR:Neron}.
	In the case that the total space of $W$ is smooth, $W \simeq X$ since $W$ is regular with integral fibers, and we have $\pic^0_{W/\bp^1_k} \simeq \sce$
	by \cite[\S9.5, Theorem 1]{BoschLR:Neron}.
	Therefore, in this case, $\pic^0_{W/\bp^1_k}[n] \simeq \sce[n]$.

	We next prove (2).
	Let $g: \bp^1_x \ra x$ denote the structure morphism,
	let $g_{\ol x}: \bp^1_{\ol x} \ra \ol x$ its base change to $\ol x$, and let $f_{\ol x}$ denote the base change of $f$ to $\ol x$.
	By \autoref{lemma:representability-sspace} (or really just proper base change), the geometric fiber of $\sspace n d B$ over $\ol x$ is identified with the stalk of 
	$\ssheaf n d B$ at $\ol{x}$.
	By construction, this fiber is $R^1 (g_{\ol x})_*(R^1 (f_{\ol x})_* \mu_n) \simeq H^1(\bp^1_{\ol x}, R^1 (f_{\ol x})_* \mu_n)$.
	The various isomorphisms are consequences of the first part.
\end{proof}
\begin{remark}
	\label{remark:}
	In the setting of \autoref{lemma:selmer-space-stalks},
	one can also verify $\pic_{X/\bp^1}[n] = \pic^0_{X/\bp^1}[n]$.
	(In fact, this even holds more generally for minimal regular proper models
	of elliptic curves over Dedekind bases.)
	Hence, the stalk of the Selmer
	space at $x$ is also identified with $H^1(\bp^1_{\ol x}, \pic_{X_{\ol x}/\bp^1_{\ol x}}[n])$.
	This remark will not be needed in what follows.
\end{remark}

\subsubsection{An open subset of the Selmer space}
\label{subsubsection:opens}

In this section, we define the open set $\smespace d B \subset \espace d B$, which parameterizes smooth minimal Weierstrass models.
\begin{definition}
	\label{definition:smooth-espace}
	For $B$ a scheme with $2$ invertible,
	let $\smespace d B \subset \espace d B$ denote the open subscheme over which $\pi: \uespace d B \ra \espace d B$ is smooth.
	More formally, if $Z \subset \uespace d B$ denotes the singular locus of the map $\pi$, then let $\smespace d B := \espace d B - \pi(Z)$.
	Let $\usmespace d B := \uespace d B \times_{\espace d B} \smespace d B$.
	Let $\smsspace n d B := \sspace n d B \times_{\espace d B} \smespace d B$.
	Let $\smssheaf n d B := \ssheaf n d B|_{\smespace d B}$.
\end{definition}
\begin{remark}
	\label{remark:smooth-espace}
	Note that $\smespace d B \subset \espace d B$ is precisely the set of points where the Weierstrass model is already smooth,
	and hence isomorphic to the minimal regular proper model.
	By \cite[\S9.4, Theorem 4.35(a)]{liu:algebraic-geometry-and-arithmetic-curves}
	the minimal Weierstrass model is the minimal regular proper model if and only if all fibers of the minimal
	regular proper model are geometrically integral. 
	Hence, all fibers of a smooth Weierstrass model over $\bp^1$ either have good reduction,
	type $\mathrm{I_1}$ reduction, or type $\mathrm{II}$ reduction.
\end{remark}
Our next goal is to verify non-emptiness of $\smespace d B$, for any $B$ with $2$ invertible on $B$, which we do later in \autoref{lemma:reduced-discriminant}.
%This is not all that difficult to do directly, but we take the occasion to first introduce and analyze various divisors in 
%the compactification $\bp^{12d+3}_{\bz[1/2]}$ in which $\espace d {\bz[1/2]}$ naturally sits,
%since it will be needed later when analyzing the monodromy of $\smsspace n d B \to \smespace d B$.
We now formally introduce the divisors which parameterize singular elliptic surfaces and elliptic surfaces with a cuspidal fiber.

\begin{definition}
	\label{definition:divii}
	Let $B$ be a scheme with $2$ invertible on $B$, let $d > 0$, and let $\aff d B$ denote the affine space of \autoref{definition:weierstrass-family} containing $\espace d B$.
Let $X_B := \proj_{\bp^1_B} \sym^\bullet \sco_{\bp^1_B} \oplus \sco_{\bp^1_B}(-2d) \oplus \sco_{\bp^1_B}(-3d)$.
Recall that every Weierstrass model $W$ of the form $y^2z = x^3 + a_2(s,t) x^2 z + a_4(s,t)xz^2 + a_6(s,t)z^3$ 
comes with an embedding $W \hookrightarrow X_B$.
Define
\begin{equation}
	\label{equation:incidence-divisor}
	\begin{tikzcd}
		\qquad &\Psi_B^d := \left\{ (W, p) : [W] \in \aff d B, p \in X_B, W\text{ is singular at }p\right\} \ar {ld}{\pi_1} \ar {rd}{\pi_2} & \\
		\aff d B && X_B. 
\end{tikzcd}\end{equation}
Define ${\mathcal C}_B^d$ as the subscheme of $\aff d B$ given as the scheme theoretic image of $\Psi^d_B$ under $\pi_1$.
Now, define $\divsing d {\bz[1/2]} := {\mathcal C}_{\bz[1/2]}^d$ and for a general $\bz[1/2]$ scheme $B$, define $\divsing d B := \divsing d {\bz[1/2]} \times_{\spec \bz[1/2]} B$.
\end{definition}
\begin{definition}
\label{definition:additive-divisor}
	Let $B$ be a scheme with $2$ invertible.
	Consider the incidence correspondence 
	\begin{equation}
	\label{equation:additive-incidence}
	\begin{tikzcd}[column sep = small]
	\qquad & \Phi_{B}^d := \{ ([W],p) : [W] \in \aff d B, p \in \bp^1_B, \text{$W \times_{\bp^1_B} p$ has a cusp} \} \ar {ld}{\sigma_1} \ar {rd}{\sigma_2} & \\
	\aff d {B} && \bp^1_{B}.
	\end{tikzcd}\end{equation}
	Define 
	$\divadd d {B}$ as the image of $\sigma_1$.
\end{definition}

Our next goal is to show that in each fiber over $\bz[1/2]$, a general
Weierstrass surface has squarefree discriminant.
The following lemma will be useful in computing the dimension of $\divsing d B$ over $B$,
and will also be used later in \autoref{subsection:boundary-divisors} where we need a more
detailed analysis of $\divsing d B$.

\begin{lemma}
\label{lemma:pi-2-smooth}
For $B$ a scheme with $2$ invertible on $B$ and $X_B$ the projective bundle over $B$ with coordinates $x,y,z$ as defined in
\eqref{equation:incidence-divisor},
let $Y_B \subset X_B$ denote the subscheme of $X_B$ given by $(y = 0)$ and $(z \neq 0)$.
Then, the map $\pi_2$ factors through the subscheme $Y_B \subset X_B$,
and the resulting map $\Psi_B^d \to Y_B$ is smooth of relative dimension $12d$ with geometrically integral fibers.
\end{lemma}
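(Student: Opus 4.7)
The plan is to prove the lemma in two stages. First I would show that $\pi_2$ factors through $Y_B$ by analyzing singular points of a Weierstrass model. Second I would compute the fibers of the resulting map $\Psi_B^d \to Y_B$ in an affine chart, showing they are affine spaces of dimension $12d$ cut out by three independent linear conditions on the coefficients of $\aff d B$.

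For the first stage, let $F = y^2z - x^3 - a_2 x^2 z - a_4 xz^2 - a_6 z^3$. Then $\partial_y F = 2yz$, and since $2$ is invertible on $B$, the vanishing of $\partial_y F$ at a singular point forces $yz = 0$, hence $y = 0$ or $z = 0$. If $z = 0$ at a singular point $p$, I would observe that $F(p) = 0$ forces $x^3 = 0$, so $x = 0$; then $\partial_z F(p) = y^2$ must also vanish, forcing $y = 0$ as well, which is impossible for a point of $\bp^2$. Hence $z \neq 0$ and $y = 0$ at every singular point, so $\pi_2$ factors through $Y_B$.

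For the second stage, I would work in an affine chart of $X_B$ with $z = 1$ and (say) $t = 1$, producing local coordinates $(u, x, y)$ with $u = s/t$; on $Y_B$ the coordinate $y = 0$ is fixed. In this chart the Weierstrass equation reads $F(u,x,y) = y^2 - x^3 - a_2(u) x^2 - a_4(u) x - a_6(u)$ with $a_{2i}(u)$ a polynomial of degree $\leq 2id$. A point $p = (u_0, x_0, 0)$ is a singular point of $W$ if and only if $F(p) = F_x(p) = F_u(p) = 0$ (vanishing of $F_y$ at $p$ is automatic since $y = 0$). Viewed as linear functionals on the coefficients of $\aff d B$, the first two conditions involve only the values $a_2(u_0), a_4(u_0), a_6(u_0)$, while the third involves only the derivatives $a_2'(u_0), a_4'(u_0), a_6'(u_0)$. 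For $d \geq 1$ each $a_{2i}$ has degree $2id \geq 2$, so these six quantities are independent linear functionals on $\aff d B$, and a direct check shows the three conditions remain linearly independent on the resulting $\mathbb A^6$. Thus the fiber of $\Psi_B^d \to Y_B$ over any geometric point of the chart is a codimension-$3$ affine subspace of $\aff d B$, isomorphic to $\mathbb A^{12d}$. Since this Jacobian rank is constant over $Y_B$, the total space $\Psi_B^d$ is flat over $Y_B$ with smooth, geometrically integral fibers of dimension $12d$, and hence the map is smooth of relative dimension $12d$. Repeating the argument in the complementary affine chart of $\bp^1$ covers $Y_B$.

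The main pitfall I would watch out for is the temptation to verify the fiber calculation using only the homogeneous partial derivatives $F_x, F_y, F_z$; the Euler identity $xF_x + yF_y + zF_z = 3F$ produces an apparent dependence among $\{F, F_x, F_z\}$ along $y = 0$, obscuring the codimension count, and moreover collapses entirely in characteristic $3$. This disappears once one replaces one of the derivatives by a partial derivative with respect to a coordinate on $\bp^1$ (here $u$), which is what allows the three equations to be genuinely independent and yields the correct relative dimension $12d$ uniformly over $B$.
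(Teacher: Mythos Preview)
Your proposal is correct and follows essentially the same approach as the paper. Both arguments first rule out $z=0$ at a singular point (you via the vanishing of $\partial_z F$, the paper via smoothness of the identity section) and then exhibit the fiber of $\pi_2$ as the vanishing of three independent linear conditions on the coefficients $a_{2i,j}$; the only cosmetic difference is that the paper changes coordinates on $\bp^1$ to place the point at $t=0$, whereas you work in the chart $t=1$ and phrase the conditions as evaluation and first derivative of $a_{2i}$ at $u_0$.
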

\begin{proof}
	Let $R$ be a ring and $p \in X_B(\spec R)$ be an $R$ valued point, which we may write in the form
	$p = [[x(p),y(p),z(p)],[s(p), t(p)]]$.
	In order to show $\pi_2$ factors through $Y_B$,
	we claim that if $W$ is singular at $p$ then $y(p) = 0$.
	To see this, if $z(p) = 0$, we then obtain $x(p) = 0$, which contradicts the fact that every point of the identity section of a Weierstrass model is smooth.
	Hence, we must have $z(p) \neq 0$, in which case we can view $p$ as a point on the open of the Weierstrass model 
	defined by $f := - y^2+  x^3 + a_2(s,t)x^2 + a_4(s,t) x + a_6(s,t)$.
	Since we must have $\frac{\partial f}{\partial y}(p) = 0$, we find $2y(p) = 0$ and so $y(p) = 0$ as $2$ is invertible on $B$.
	Ergo, the map $\pi_2$ factors through the locally closed subscheme $Y_B \subset X_B$ defined by $(y = 0)$ and $(z \neq 0)$.

	To conclude the proof, we claim $\Psi^d_B$ of \autoref{definition:divii}
	is smooth over $Y_B$ with fibers that are affine spaces of dimension $12d$.
	Let $R$ be a ring and $p \in Y_B(\spec R)$ and $R$-valued point.
	By changing coordinates on $\bp^1_{[s,t]}$, it suffices to check the above claim in a neighborhood of $\pi_2(p)$ for $p \in Y_B(\spec R)$ an $R$-valued point of $Y_B$ with $t(p) = 0$.
	Write $a_{2i}(s,t) = \sum_{j=0}^{2id} a_{2i,j} t^j s^{2id-j}$ for $i \in \{1,2,3\}$ as in \autoref{definition:weierstrass-family}.
	Using $t(p) = 0$, the condition that $W$ is singular at $p$ can be written in terms of the Jacobian criterion as those $(a_{2,0}, a_{2,1} \ldots, a_{2,2d}, a_{4,0}, \ldots, a_{4,4d}, a_{6,0} \ldots, a_{6,6d})$ satisfying 
	$f(p)=0$, $\frac{\partial f}{\partial x}(p)=0$, and $\frac{\partial f}{\partial t}(p)=0$
	(in addition to the condition that $y(p) = 0$ coming from $\frac{\partial f}{\partial y}(p)=0$).
Expanding these three equations yields
\begin{equation}
	\begin{aligned}
		\label{equation:jacobian-criterion}
		y^2 &= x^3 + a_{2,0} x^2 + a_{4,0} x + a_{6,0} \\
		0 &= 3x^2 + a_{2,0} x + a_{4,0} \\
		0 &= x^3 + a_{2,1} x^2 + a_{4,1} x + a_{6,1}
	\end{aligned}
\end{equation}
	at $p$.
	That is, when one replaces replace $x$ and $y$ in the above equations with $x(p)$ and $y(p)$, the vanishing locus
	of the resulting three equations defines those $W$ singular at $p$.
	These equations define three independent linear equations on the $a_{2i,j}$ coordinates of $\aff d B$.
	Therefore, $\pi_2$ is in fact an affine space bundle over $Y_B$ of relative dimension $12d$,
	and hence smooth of relative dimension $12d$ with geometrically integral fibers.
\end{proof}

We are now ready to show that a general Weierstrass surface has squarefree discriminant.
In particular, this will show that $\smespace d B$ is fiberwise dense in $\espace d B$.

\begin{lemma}
\label{lemma:reduced-discriminant}
	Let $k$ be a field of characteristic not equal to $2$.
	There is a nonempty open subscheme of $\espace d k$ parameterizing Weierstrass models with squarefree discriminant. 
	In particular, for any scheme $B$ with $2$ invertible on $B$, $\smespace d B$ is fiberwise dense over $B$, as is the open subscheme
	parameterizing smooth elliptic surfaces with no cuspidal fibers.
\end{lemma}
Note that the discriminant being squarefree is another way to say that the vanishing locus of the discriminant polynomial is reduced.
\begin{remark}
\label{remark:}
The natural analog of \autoref{lemma:reduced-discriminant} holds in characteristic $2$ as well.
Namely, it is true that a generic Weierstrass model has squarefree discriminant in characteristic $2$, and
essentially the same proof works. The only reason we did not state it above is that one has to first define
the relevant moduli space, which uses long Weierstrass form to describe the elliptic surfaces in characteristic $2$.
\end{remark}
\begin{proof}
	As a first step, we explain why a Weierstrass elliptic surface over a field $k$ of height $d$ has reduced discriminant if and only if it corresponds to a point in the complement 
	of $\divsing d k \cup \divadd d k$.
	Note that a smooth Weierstrass surface over $k$ which has no cuspidal fibers
	only has smooth fibers or $\mathrm{I_2}$ fibers, since being smooth implies the Weierstrass surface is
	the minimal regular proper model of its generic fiber.
	Such a surface has discriminant equal to the product of the places with $\mathrm{I_2}$ fibers, and therefore
	has squarefree discriminant.
	Conversely, because cuspidal fibers and singular points contribute a square factor to the discriminant,
	if the discriminant is squarefree, then the Weierstrass surface must be smooth 
	and without cuspidal fibers.

	Therefore, to conclude the proof, it is enough to show that the locus of Weierstrass surfaces of height $d$ over a field $k$
	which are smooth and have no cuspidal fiber is a dense open subscheme of $\espace d k$.
	The statement for general schemes $B$ then follows from base change to every point of $B$.

	Let $\aff d k$ denote the affine space of \autoref{definition:weierstrass-family}, and let $\divsing d k$ and $\divadd d k \subset \aff d k$ 
	be the subschemes defined in \autoref{definition:divii} and \autoref{definition:additive-divisor}.	
	By construction of these subschemes, the complement $\aff d k - (\divsing d k \cup \divadd d k)$
	is precisely the open subscheme whose points correspond to Weierstrass elliptic surfaces
	which are smooth and have no cuspidal fibers.
	Since $\dim \aff d k = 12d + 3$, to complete the proof, it is enough to show
	$\dim \divsing d k\cup \divadd d k \leq 12d + 2$. 
	(In fact, $\divsing d k$ and $\divadd d k$ are both irreducible of dimension $12d+2$, though we won't need to prove this.)

	First, we show $\divadd d k$ has dimension at most $12d+2$.
	Let $\sigma_1$ and $\sigma_2$ denote the maps from \autoref{definition:additive-divisor}.
	We claim the fiber of $\sigma_2$ over any point $p \in \bp^1_k$ has dimension $\dim \aff d k - 2$.
	To see this, the condition of having a cuspidal fiber over $p$
	can be expressed explicitly in terms of a Weierstrass equation $zy^2 = f(x,z)$
	as the condition that $f(x,z)$ has a single root in the fiber over $p$.
	The reduced scheme parameterizing degree $3$ polynomials with a single root has codimension $2$ in the scheme
	of all degree $3$ polynomials, which implies that the fibers of $\sigma_2$ have dimension $\dim \aff d k - 2$.
	Since $\divadd d k$ is the image of $\sigma_1$,
	we obtain $\divadd d k$ has dimension at most $\dim \aff d k - 2 + 1 = 12d+2$.

	To conclude, we verify
	$\divsing d k$ has dimension at most $12d+2$.
	This computation can be done similarly to the case of $\divadd d k$,
	though we opt to instead apply
	\autoref{lemma:pi-2-smooth}.
	The dimension of
	$\divsing d k$ is at most the dimension of
	$\Psi_k^d$ from
	\eqref{equation:incidence-divisor}.
	Further, defining $Y_k$ as in \autoref{lemma:pi-2-smooth},
	the dimension of $\Psi_k^d$ over $k$ is the sum of the dimension of $\Psi_k^d$ over $Y_k$ 
	and the dimension of $Y_k$.
	The former is $12d$ by \autoref{lemma:pi-2-smooth} and $\dim Y_k = 2$.
	Therefore, the dimension of $\Psi_k^d$, and hence $\divsing d k$ is at most $12d + 2$.
	\end{proof}
For the statement of the next lemma, recall our notational conventions from \autoref{subsubsection:general-notation}.
\begin{lemma}
	\label{lemma:torsion-points-proper}
	Let $d > 0$ and let $k$ be a field.
	For any map
	$\spec k \ra \smespace d B$, 
$H^0(\bp^1_{k}, \sce[n]) = H^0(\bp^1_{k}, \sce^0[n])= 0$.
\end{lemma}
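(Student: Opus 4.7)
The plan is to reduce the vanishing to the nonexistence of a nontrivial $n$-torsion Mordell--Weil point in $E(k(t))$, and then to derive a contradiction from the intersection theory of the smooth Weierstrass surface $W$. First I invoke \autoref{remark:smooth-espace}: over $\smespace d B$, every singular fiber of the N\'eron model $\sce$ has Kodaira type $\mathrm{I}_1$ or $\mathrm{II}$, hence is irreducible, so $\sce = \sce^0$ and the first equality in the statement is automatic. By \autoref{lemma:selmer-space-stalks}, $\sce[n] \simeq W^{\sm}[n]$ where $W^{\sm}$ is the N\'eron model of $E/k(t)$, so the N\'eron mapping property identifies sections of $\sce[n]$ over $\bp^1_k$ with the group $E(k(t))[n]$ of $n$-torsion $k(t)$-points. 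Since $E(k(t))[n] \hookrightarrow E(\overline k(t))[n]$ under base change, I may assume $k$ is algebraically closed.

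The heart of the argument is to rule out a nontrivial $n$-torsion section of the smooth minimal elliptic surface $\pi\colon W \to \bp^1_k$ of positive height. Its Euler characteristic is $\chi(\sco_W) = d$, and for any section $P$ Shioda's height formula reads
\[
\hat h(P) \;=\; 2d + 2\,(P \cdot O) \;-\; \sum_{v} \mathrm{contr}_v(P),
\]
where each local correction $\mathrm{contr}_v(P)$ vanishes whenever the fiber at $v$ has a unique irreducible component. Since all bad fibers are of type $\mathrm{I}_1$ or $\mathrm{II}$, every correction is zero. For a nontrivial $n$-torsion section $P$, $\hat h(P) = 0$ and $P \ne O$ gives $P\cdot O \ge 0$ (intersection of two distinct curves on the smooth surface $W$), so the formula forces $d \le 0$, contradicting $d > 0$. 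Equivalently, one could invoke the classical injection of $E(\overline k(t))_{\mathrm{tors}}$ into the direct sum of the component groups $\Phi_v$ of bad fibers, each of which vanishes for Kodaira types $\mathrm{I}_1$ and $\mathrm{II}$.

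The main obstacle is the final step, the application of Shioda's height formula (or, equivalently, the component-group injection for torsion). These are classical facts for smooth minimal elliptic surfaces over an algebraically closed base, but one must ensure the formulas hold in arbitrary characteristic; since $n$ is invertible on $B$ and thus coprime to $\chr(k)$, the relevant specialization maps remain injective on $n$-torsion and the usual computation of $\chi(\sco_W)$ and $P^2=-d$ goes through, so the argument reduces to citing the appropriate reference.
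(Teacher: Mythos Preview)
Your argument is correct. The paper's own proof is essentially a two-line citation: it passes to a geometric point, uses \autoref{lemma:selmer-space-stalks} to identify $\sce[n]$ with $\sce^0[n]$ on the smooth locus, and then invokes \cite[Lemma~5.15]{de-jong:counting-elliptic-surfaces-over-finite-fields} for the vanishing of torsion sections of $\sce^0$ when $d>0$. You instead unpack what that reference proves, supplying the intersection-theoretic justification via Shioda's height formula on the smooth Weierstrass surface~$W$. Since all bad fibers over $\smespace d B$ are irreducible, the local correction terms vanish and $\hat h(P)=2d+2(P\cdot O)$ forces $(P\cdot O)=-d<0$ for a nontrivial torsion section, contradicting nonnegativity of the intersection of distinct curves on a smooth surface. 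This is exactly the kind of argument underlying de~Jong's lemma (one may equivalently observe that the trivial lattice $\langle O,F\rangle$ is unimodular, hence saturated in $\mathrm{NS}(W)$, so $E(\overline k(t))_{\mathrm{tors}}=0$).

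Two minor remarks. First, the Shioda height computation is purely intersection theory on a smooth projective surface and is valid in arbitrary characteristic; your closing caveat about $n$ being coprime to $\chr(k)$ is not actually needed for this step. Second, your alternative phrasing as an ``injection of $E(\overline k(t))_{\mathrm{tors}}$ into $\bigoplus_v \Phi_v$'' is not literally a standard theorem in that form, but the correct version (saturation of the trivial lattice when all fibers are irreducible) gives the same conclusion, so no harm is done.
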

\begin{proof}
	It suffices to check this statement at a geometric point $\ol x$ over $\spec k$.
	By \autoref{lemma:selmer-space-stalks}, for $\ol x \in \smespace d B$, we have 
$H^0(\bp^1_{\ol x}, \sce_{\ol x}[n]) \simeq H^0(\bp^1_{\ol x}, \sce^0_{\ol x}[n])$.
The claim then follows from \cite[Lemma 5.15]{de-jong:counting-elliptic-surfaces-over-finite-fields},
which says that when $d > 0$,
the only torsion section $\bp^1_{\ol x} \ra \sce_{\ol x}^0[n]$ is the identity section.
\end{proof}

\subsubsection{The schematic locus of the Selmer space}
\label{subsubsection:schematic}
We next show that the Selmer space is a separated scheme over $\smespace d B$, which we accomplish in
\autoref{lemma:selmer-space-is-almost-scheme}.
Separatedness will be crucially used in \autoref{subsubsection:finiteness} to show the Selmer space is finite over $\smespace d B$.
In order to verify separatedness, we will need the following lemma, which tells us that we can recover the relative $n$-torsion of a smooth elliptic surface
over a complete discrete valuation ring as the \'etale locus in its relative normalization over $\bp^1$.

\begin{lemma}
\label{lemma:selmer-space-normalization}
Let $B$ be a scheme with $2n$ invertible on $B$ and let $\usmespace d B \xra{f} \bp^1_B \times_B \smespace d B$
denote the projection map.
For a given map $T \to \smespace d B$ with $T$ the spectrum of a complete discrete valuation ring,
or $T$ a field of arbitrary characteristic,
let $f^T$ denote the base change of $f$ to $T$
and let $U_T$ denote the algebraic space representing the sheaf $R^1 f^T_* \mu_n$ on $\bp^1_T$.
Then, $U_T$ is represented by a scheme, quasi-finite and separated over $\bp^1_T$.
Further, letting $\nu_T : Z_T \to \bp^1_T$ denote the relative normalization
of $\bp^1_T$ along the map $U_T \to \bp^1_T$,
$U_T$ is the \'etale locus of $\nu_T: Z_T \to \bp^1_T$.
\end{lemma}
\begin{proof}
We first show $U_T$ is a scheme, separated and quasi-finite over $\bp^1_T$.
This follows from the fact that it has a locally closed embedding into $\usmespace d B \times_{\smespace d B} T$.
Indeed, this is explained in
\cite[p. 250-251]{artinSD:the-shafarevich-tate-conjecture-for-pencils-of-elliptic-curves},
the point being that the smooth locus of $\alpha: \usmespace d B \times_{\espace d B} T \to \bp^1_T$ can be identified with 
$\pic^0_{\alpha}$
and $U_T$ is identified with $\pic^0_{\alpha}[n]$.

Note that $U_T \to \bp^1_T$ is \'etale, and so $\nu_T$ is an isomorphism over $U_T$.
Hence, to show
$U_T$ is the \'etale locus of $\nu_T$,
it suffices
to show that $Z_T - U_T$ is contained in the singular locus of $\nu_T$.

Let $s$ denote the closed point of $T$ and $\eta$ denote the generic point.
Our ultimate goal will be to reduce to the case that $T = \spec k$ for $k$ a field,
and as preparation, we will check that $\nu_T$ is finite flat and that $U_s$ and $U_\eta$ are dense
in $Z_T \times_T s$ and $Z_T \times_T \eta$, respectively.
Since normalization does not in general commute with base change, it is not automatically true that $Z_T \times_T s \simeq Z_s$, and so this is not a tautology.

We now check $\nu_T$ is finite.
Since $T$ is the spectrum of a complete local noetherian ring, it is Nagata.
\cite[\href{https://stacks.math.columbia.edu/tag/032W}{Tag 032W}]{stacks-project}
Therefore, $\bp^1_T$ is also Nagata by \cite[\href{https://stacks.math.columbia.edu/tag/0334}{Tag 0334}]{stacks-project}.
Hence $\bp^1_T$ is universally Japanese again by \cite[\href{https://stacks.math.columbia.edu/tag/0334}{Tag 0334}]{stacks-project}.
By the definition of Japanese schemes, $\nu_T$ is finite.

We next check $\nu_T$ is flat.
Since $Z_T$ is a normal $2$-dimensional scheme it is Cohen-Macaulay.
Since $\bp^1_T$ is regular and $\nu_T$ is finite, it follows from
miracle flatness
\cite[\href{https://stacks.math.columbia.edu/tag/00R3}{Tag 00R3}]{stacks-project}
that $\nu_T$ is flat.

We next verify that $Z_T \times_T s$ and $Z_T \times_T \eta$ are proper curves birational to $U_s$ and $U_\eta$.
Properness is automatic from finiteness of $\nu_T$.
Further, since $U_T$ is dense in $Z_T$, we obtain that $U_\eta$ is dense in $Z_T \times_T \eta$,
and so these schemes are birational.
Next, we check $U_s$ is dense in $Z_T \times_T s$.
We show this by checking the 
degree of $U_s \to \bp^1_s$ agrees with the degree of $Z_T \times_T s \to \bp^1_s$.
Note that once we show the degrees are equal, it will follow that $U_s$ is dense in $Z_T \times_T s$
because both are pure dimension $1$ schemes due to $\nu_T$ being finite flat.
Since $\nu_T$ is a finite flat map of schemes each fiber of $\nu_T$ has the same degree.
Further, by construction of $U_T$ as $R^1 f_*^T \mu_n$, $n^2 = \deg(U_s \to \bp^1_s) = \deg(U_\eta \to \bp^1_\eta)$.
Because $\deg(U_\eta \to \bp^1_\eta) = \deg(Z_T \times_T \eta \to \bp^1_\eta) = \deg(Z_T \times_T s \to \bp^1_s)$,
the degree of $\nu_T$ is also equal to $n^2$.
Therefore, $\deg(U_s \to \bp^1_s) = n^2 =  \deg(Z_T \times_T s \to \bp^1_s)$ implying $U_s$ is dense in $Z_T \times_T s$.

We now use the above to reduce the case that $T$ is the spectrum of a complete 
discrete valuations ring to the case that $T = \spec k,$ for $k$ a field.
For any map $\spec k \ra T$, for $k$ a field,
we have seen above that $Z_T \times_T \spec k$ is a proper curve birational to $U_{\spec k}$.
Hence
$Z_{\spec k}$ is also the normalization of $Z_T \times_T \spec k$.
Therefore, we obtain a map $\phi_{\spec k \to T}: Z_{\spec k} \to Z_T \times_T \spec k$.
Observe that if $Z_T \times_T \spec k \to \bp^1_k$ were \'etale at a point $p$ of $Z_{\spec k} - U_{\spec k}$,
then $\phi_{\spec k \to T}$ is an isomorphism in a neighborhood of $p$.
Hence, it suffices to show $Z_{\spec k} - U_{\spec k}$ is contained in the singular locus of $\nu_{\spec k}$.
This completes the reduction to the case $T = \spec k$.

Now, suppose $p$ is a point of $Z_{\spec k} - U_{\spec k}.$ We will show $p$ lies in the singular locus of $\nu_{\spec k}$.
Let $W := \usmespace d B \times_{\smespace d B} \spec k$ denote the Weierstrass model of the elliptic curve associated to the map $\spec k \to \smespace d B$.
Then, the locally closed embedding $U_{\spec k} \to W$ given in 
\autoref{lemma:selmer-space-stalks}(1)
factors through a closed embedding into the N\'eron model of the generic fiber of $W \to \bp^1$, which is $W^{\sm}$, the smooth locus of $W \to \bp^1_k$, by
\autoref{lemma:selmer-space-stalks}(1).
If $Z_{\spec k} \to \bp^1_k$ were \'etale at $p$, the N\'eron mapping property would guarantee that $p$ factored through $W^{\sm}$, the N\'eron
model. 
However,
the map $U_{\spec k} \to W$ extends to a map $Z_{\spec k} \to W$ by normality of $Z_{\spec k}$ and properness of $W$.
Yet, $U_{\spec k} \subset W^{\sm}$ is closed in the N\'eron model.
This implies $p$ maps to a point of $W - W^{\sm}$,
and hence $Z_{\spec k} \to \bp^1_k$ cannot be \'etale at $p$.
\end{proof}

With the above lemma in hand, we are ready to prove $\smsspace n d B$ is separated.
The idea of the proof is to check separatedness by the valuative criterion. 
We check the valuative criterion by translating it to checking that a certain map of cohomology groups is injective.
We verify this in turn using \autoref{lemma:selmer-space-normalization} to recover a torsor for the $n$-torsion of an elliptic curve
as the \'etale locus in its relative normalization over $\bp^1$.

\begin{proposition}
	\label{lemma:selmer-space-is-almost-scheme}
	Let $B$ be a scheme with $2n$ invertible on $B$ and $d > 0$. Then $\smsspace n d B\ra \smespace d B$ is separated.
	In particular, $\smsspace n d B $, which is
	a priori only an algebraic space, is a scheme.
\end{proposition}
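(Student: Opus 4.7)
The plan is to verify separatedness via the valuative criterion and then deduce scheme-ness from the standard fact that a separated, locally quasi-finite algebraic space over a scheme is a scheme. Concretely, fix a discrete valuation ring $R$ with fraction field $K$ and a morphism $\spec R \to \smespace d B$; pulling back the universal family gives $f^R : W_R \to \bp^1_R$, with $W_R$ smooth over $\bp^1_R$ by \autoref{remark:smooth-espace}. Given two lifts $\sigma_1, \sigma_2 : \spec R \to \smsspace n d B$ agreeing over $\spec K$, the goal is to show $\sigma_1 = \sigma_2$. Since $\smsspace n d B$ represents the sheaf of abelian groups $\smssheaf n d B$, this reduces to showing that the restriction map $\smssheaf n d B(\spec R) \to \smssheaf n d B(\spec K)$ is injective.

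To translate this into a cohomological statement, I would apply the Leray spectral sequence for $g^R : \bp^1_R \to \spec R$ to the sheaf $R^1 f^R_* \mu_n$. Using that the formation of $\smssheaf n d B$ commutes with base change (\autoref{lemma:representability-sspace}) and that $R^1 f^R_* \mu_n \simeq W_R[n]$ (\autoref{lemma:selmer-space-stalks}(1)), together with the vanishing $g^R_* W_R[n] = 0$ which follows, via proper base change, from \autoref{lemma:torsion-points-proper} and the assumption $d > 0$, the five-term exact sequence of the Leray spectral sequence yields
\[
\smssheaf n d B(\spec R) \;\cong\; H^1(\bp^1_R, W_R[n]),
\]
and analogously over $K$. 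Thus it suffices to show that every $W_R[n]$-torsor $\tau$ on $\bp^1_R$ whose restriction to $\bp^1_K$ is trivial is itself trivial.

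Here the crucial geometric input is that, because $W_R \to \bp^1_R$ is smooth proper with the Weierstrass zero section, it is an abelian scheme, so $W_R[n]$ is finite \'etale over $\bp^1_R$, and any torsor $\tau$ for it is also finite \'etale over $\bp^1_R$. Triviality on $\bp^1_K$ supplies a section $s_K : \bp^1_K \to \tau_K$. Let $Y \subset \tau$ denote the scheme-theoretic closure of $s_K(\bp^1_K)$; then $Y$ is integral, $Y \to \bp^1_R$ is finite (as $\tau \to \bp^1_R$ is), and its restriction over $\bp^1_K$ is an isomorphism. Since $\bp^1_R$ is noetherian, integral, and normal, Zariski's main theorem forces this finite birational morphism to be an isomorphism, and inverting it produces the sought-after section $\bp^1_R \to \tau$, trivializing $\tau$.

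The main obstacle will be the Leray bookkeeping, namely carefully confirming that $\smssheaf n d B(\spec R)$ coincides with $H^1(\bp^1_R, W_R[n])$ using proper base change and the vanishing of the global sections of the $n$-torsion on each geometric fiber. Once that identification is in place, extending the generic section via scheme-theoretic closure and Zariski's main theorem is a clean geometric argument exploiting smoothness of the Weierstrass model over $\smespace d B$, and the passage from separatedness to scheme-ness is a standard consequence of the theory of algebraic spaces.
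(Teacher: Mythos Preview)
Your reduction via the valuative criterion and the Leray spectral sequence is correct and matches the paper exactly, including the use of \autoref{lemma:torsion-points-proper} to kill $g^R_* (R^1 f^R_* \mu_n)$. The problem is the next step.

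The claim that $W_R \to \bp^1_R$ is smooth is false. The open $\smespace d B$ is defined so that the \emph{total space} $\uespace d B$ is smooth over $\smespace d B$; equivalently, each fiber $W_x$ is a smooth surface. But the map $f_x : W_x \to \bp^1$ still has singular fibers --- by \autoref{remark:smooth-espace} they are nodal ($\mathrm{I}_1$) or cuspidal ($\mathrm{II}$) cubics, and for $d>0$ the degree-$12d$ discriminant must vanish somewhere. Hence $W_R$ is never an abelian scheme over $\bp^1_R$, and $R^1 f^R_* \mu_n \simeq W_R^{\sm}[n]$ is only quasi-finite \'etale, not finite: over an $\mathrm{I}_1$ fiber the stalk drops to $\bz/n\bz$ and over a $\mathrm{II}$ fiber it is trivial. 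Consequently a torsor $\tau$ for this sheaf is \'etale but not proper over $\bp^1_R$, so you cannot take the closure of the generic section \emph{inside} $\tau$ and invoke Zariski's main theorem directly; the closure must be taken in some compactification, and then you have to argue the resulting section actually lands back in the open subspace $\tau$.

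This is exactly the content the paper supplies in \autoref{lemma:selmer-space-normalization}: one compactifies $\tau$ by the relative normalization $\widetilde{\tau}$ of $\bp^1_R$ in $\tau_K$, observes $\tau$ is open in $\widetilde{\tau}$ by Zariski's main theorem, and then shows $\tau$ is precisely the \emph{\'etale locus} of $\widetilde{\tau} \to \bp^1_R$. That last identification --- checked \'etale-locally, where $\tau$ trivializes and one can use the N\'eron mapping property to see that the compactified section hits the non-smooth locus over the bad fibers --- is the missing idea. Your Zariski's main theorem instinct is the right one, but it needs this extra step to control what happens over the singular fibers of $f$.
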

\begin{proof}
	By \cite[V, Theorem 1.5]{Milne:etaleBook} in order to show $\smsspace n d B$ is a scheme,
	it suffices to show 
	$\smsspace n d B\ra \smespace d B$ is separated.
	Further, since this map is the base change along $B \to \spec \bz[1/2n]$, it is enough to show
	$\psi: \smsspace n d {\bz[1/2n]} \ra \smespace d {\bz[1/2n]}$ is separated.
	By a variant of the valuative criterion for separatedness applied to only complete discrete valuation rings
	\cite[\href{https://stacks.math.columbia.edu/tag/0ARL}{Tag 0ARL}]{stacks-project}, 
	it is enough to check that for 
	all complete discrete valuation rings $R$ with $S = \spec R$, generic point $\eta$, and closed point $s$, it is enough to show
	the natural map $H^0(S, \smsspace n d {\bz[1/2n]}) \ra H^0(\eta, \smsspace n d {\bz[1/2n]})$ is injective.

For any scheme $T$ with a map $T \ra \espace d {\bz[1/2n]}$, we let $f^T$ and $g^T$ denote the base changes of the
morphisms $f$ and $g$ defined in the statement of \autoref{lemma:representability-sspace} to $T$.
We next claim that for any integral scheme $T$ with a map $T \ra \smespace d {\bz[1/2n]}$, the Leray spectral sequence associated to the composition of functors $\Gamma$ and $g^T_*$ 
induces an isomorphism $H^1(\bp^1_T, R^1 f^T_* \mu_n) \simeq H^0(T, R^1 g^T_* (R^1 f^T_* \mu_n))$.
To verify this, we only need verify that $H^1(T, g^T_* (R^1f^T_* \mu_n)) = H^2(T, g^T_* (R^1f^T_* \mu_n)) = 0$.
In fact, we will show $g^T_* (R^1 f^T_* \mu_n) = 0$.
Since $f^T$ and $g^T$ are proper, by proper base change, we can compute the stalks of the above sheaf after base change to any field valued point
of $T$, and so we may reduce to the case that $T= \spec k$ for $k$ a field.
In this case, we have $g^T_* R^1 f^T_* \mu_n = 0$ by \autoref{lemma:torsion-points-proper}.

The established claim of the previous paragraph applied to the cases $T = S$ and $T = \eta$ (for $S$ the spectrum of a complete discrete valuation ring)
shows that in order to check
$H^0(S, R^1 g^S_* (R^1 f^S_* \mu_n)) \ra H^0(\eta, R^1 g^\eta_* (R^1 f^\eta_* \mu_n))$ is injective, it is equivalent to check
\begin{equation}
H^1(\bp^1_S, R^1 f^S_* \mu_n) \ra H^1(\bp^1_\eta, R^1 f^\eta_* \mu_n)
\label{equation:generic-fiber-restriction}
\end{equation}
is injective.
Let $X$ denote the algebraic space corresponding to an element of $H^1(\bp^1_S, R^1 f^S_* \mu_n)$, thought of as an $R^1 f^S_* \mu_n$ torsor over $\bp^1_S$.
Let 
$\widetilde{X}$ denote the relative normalization of $\bp^1_S$ in $X \times_S \eta$.
We will show that $X$ is the \'etale locus of $\widetilde{X} \to \bp^1_S$.
This will imply
\eqref{equation:generic-fiber-restriction}
is injective as $X$ can be recovered from its image under
\eqref{equation:generic-fiber-restriction}.

As a first step, we check $X$ is open in $\widetilde{X}$.
Indeed, we have that $R^1 f^S_* \mu_n$ is representable by a quasi-finite separated scheme over $\bp^1_S$
by \autoref{lemma:selmer-space-normalization}.
Therefore, $X$ is also representable by a quasi-finite separated scheme over $\bp^1_S$, 
as this can be checked on an \'etale cover of $\bp^1_S$ over which $X$ acquires
a section.
By a variant of Zariski's main theorem
\cite[\href{https://stacks.math.columbia.edu/tag/082J}{Tag 082J}]{stacks-project}
it follows that $X$ is open in $\widetilde{X}$.

Using that $X$ is open in $\widetilde{X}$, we now complete the proof by verifying $X$ is precisely the \'etale locus of
$\widetilde{X} \to \bp^1_S$. 
Indeed, we can check this on an \'etale cover of $\bp^1_S$,
since the formation of normalization commutes with \'etale base change.
Hence, we may check this on a cover of $\bp^1_S$ over which $X$ becomes the trivial torsor, 
in which case the claim follows from \autoref{lemma:selmer-space-normalization}.
\end{proof}

\subsubsection{Finiteness properties of the Selmer space}
\label{subsubsection:finiteness}

We next aim to prove \autoref{corollary:finite-etale-smooth-locus}, 
which states that $\smsspace n d B \ra \smespace d B$ is finite and represents a sheaf of free $\bz/n\bz$ modules.
Our strategy for doing so is to prove this statement for every point of $\espace d B$,
and use this to deduce the finiteness statement by flatness considerations and constancy of degree.
In order to prove that it is a sheaf of {\em free} $\bz/n\bz$ modules, it will be useful to introduce the component group of the N\'eron model.
This component group will also play a crucial role in \autoref{subsection:selmer-space-points} when relating elements of Selmer groups to points of the Selmer space.

\begin{definition}
	\label{definition:component-group}
	Let $C$ be an integral Dedekind scheme and let $E$ be an elliptic curve over $K(C)$ with N\'eron model $\sce$.
	Let $v \in C$ be a closed point with residue field $\kappa(v)$.
	Let $\sce_v/\sce_v^0$ denote the group scheme of connected components of the special fiber.
	Define the group $\comp E v := (\sce_v/\sce_v^0)(\kappa(v))$
	and let $\fcomp E := \prod_{\text{closed points }v \in C} \comp E v$.
\end{definition}
\begin{remark}
	\label{remark:}
	The number
	$\# \comp E v$ is the local Tamagawa number of $E$ at $v$ and $\# \fcomp E$ is the Tamagawa number of $E$.
\end{remark}

Using \autoref{definition:component-group}, we can determine the module structure
of the geometric fibers of $\sspace n d k \ra \espace d k$ over a point of $\smespace d k$.
\begin{lemma}
	\label{lemma:constructible-rank}
	Let $k$ be an algebraically closed field of characteristic prime to $n$ (possibly of characteristic $2$)
	and let $E$ be an elliptic curve over $k(t)$ with N\'eron model $\sce \ra \bp^1_k$ and discriminant of degree $12d$ with $d >0$.
	\begin{enumerate}
		\item If $\fcomp {E}/n \fcomp {E}= \id$ and $H^0(\bp^1_k, \sce[n]) = 0$, then $H^1(\bp^1_k, \sce[n])$ is a free $\bz/n\bz$ module.
		\item If the minimal Weierstrass model of $E$ is smooth, $H^1(\bp^1_k, \sce[n])$ has rank $12d-4$.
	\end{enumerate}
\end{lemma}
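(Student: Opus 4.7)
I would deduce part (2) from part (1) together with a Grothendieck--Ogg--Shafarevich Euler characteristic computation, and prove part (1) by comparing cohomology at successive prime powers via multiplication-by-$\ell$ short exact sequences. The central technical step will be establishing $H^2(\bp^1_k, \sce[m]) = 0$ for all $m \mid n$, which I would obtain via Poincar\'e duality on $\bp^1_k$ together with the Weil pairing on the generic fiber.

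For part (1), the hypothesis $\fcomp{E}/n\fcomp{E} = \id$ is equivalent to $\gcd(n, |\fcomp{E}|) = 1$, so multiplication by any $m \mid n$ is surjective on the \'etale sheaf $\sce$ over $\bp^1_k$, yielding a short exact sequence
\begin{equation*}
  0 \to \sce[\ell] \to \sce[\ell^{a+1}] \xrightarrow{\ell} \sce[\ell^a] \to 0
\end{equation*}
for each prime power $\ell^{a+1} \mid n$. The $H^0$-vanishing extends to all $m \mid n$ via the inclusion $\sce[m] \hookrightarrow \sce[n]$. For the $H^2$-vanishing, I would split $\sce[m]$ along the good-reduction open $j \colon U \hookrightarrow \bp^1_k$ and its bad-locus complement $i \colon S \hookrightarrow \bp^1_k$: the skyscraper $i_*(\sce[m]|_S)$ contributes no $H^2$, and the lisse rank-two sheaf $\sce[m]|_U$ is self-dual up to a Tate twist via the Weil pairing, so Poincar\'e duality on $U$ identifies $H^2(\bp^1_k, j_!(\sce[m]|_U))$ with the dual of $H^0(U, \sce[m]|_U)$; this vanishes because the N\'eron mapping property yields $H^0(U, \sce[m]|_U) = H^0(\bp^1_k, \sce[m]) = 0$. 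With both vanishings, the long exact sequence collapses to
\begin{equation*}
  0 \to H^1(\bp^1_k, \sce[\ell]) \to H^1(\bp^1_k, \sce[\ell^{a+1}]) \to H^1(\bp^1_k, \sce[\ell^a]) \to 0.
\end{equation*}
Since the composition $\sce[\ell^{a+1}] \xrightarrow{\ell} \sce[\ell^a] \hookrightarrow \sce[\ell^{a+1}]$ equals multiplication by $\ell$ on the sheaf, the image of $\ell$ on $H^1(\bp^1_k, \sce[\ell^{a+1}])$ has size $|H^1(\bp^1_k, \sce[\ell^a])|$, forcing $|H^1(\bp^1_k, \sce[\ell^{a+1}])[\ell]| = |H^1(\bp^1_k, \sce[\ell])|$. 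The structure theorem for modules over $\bz/\ell^{a+1}\bz$ combined with induction on $a$ then gives freeness of $H^1(\bp^1_k, \sce[\ell^{a+1}])$, and the Chinese Remainder Theorem assembles these across primes into freeness of $H^1(\bp^1_k, \sce[n])$ over $\bz/n\bz$.

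For part (2), smoothness of the minimal Weierstrass model forces $\fcomp{E} = \id$ (since all singular fibers are irreducible), so the hypothesis of (1) holds and $H^1(\bp^1_k, \sce[n])$ is a free $\bz/n\bz$-module; its rank equals $\dim_{\bbf_\ell} H^1(\bp^1_k, \sce[\ell])$ for any prime $\ell \mid n$. By \autoref{remark:smooth-espace}, the only singular fibers are of Kodaira type $\mathrm{I}_1$ or $\mathrm{II}$; letting $N_1$ and $N_2$ denote their respective counts gives $N_1 + 2N_2 = 12d$ via the discriminant. The stalks of $\sce[\ell]$ are $(\bz/\ell\bz)^2$ at good points, $\mu_\ell$ (of $\bbf_\ell$-dimension $1$) at $\mathrm{I}_1$ points, and $0$ at $\mathrm{II}$ points, since the special fiber of $\sce^0$ there is $\bg_m$ and $\bg_a$ respectively. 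The monodromy of $\sce[\ell]|_U$ is tame for $\ell$ coprime to $\chr k$, so Grothendieck--Ogg--Shafarevich yields $\chi_c(U, \sce[\ell]|_U) = 2(2 - N_1 - N_2)$; adding the skyscraper contribution $N_1$ gives $\chi(\bp^1_k, \sce[\ell]) = 4 - 12d$. Combined with $H^0 = H^2 = 0$, this forces $\dim_{\bbf_\ell} H^1(\bp^1_k, \sce[\ell]) = 12d - 4$, which is the rank of the free $\bz/n\bz$-module.
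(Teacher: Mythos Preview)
Your argument for part~(1) is correct and follows the same strategy as the paper: reduce to prime powers, use the short exact sequences on torsion subsheaves, and establish $H^0 = H^2 = 0$ to collapse the long exact sequence in cohomology. For the $H^2$ vanishing the paper uses the identification $\sce[m] \simeq \iota_* E[m]$ (from the N\'eron mapping property, with $\iota$ the inclusion of the generic point) together with Poincar\'e duality for such sheaves on a curve, whereas you use the $j_!$/$i_*$ decomposition along the good-reduction open and compactly supported duality on $U$; these are equivalent packagings of the same idea.

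For part~(2) there is a genuine gap when $\chr k \in \{2,3\}$, which the lemma explicitly allows. Your claim that the monodromy of $\sce[\ell]|_U$ is tame whenever $\ell$ is coprime to $\chr k$ is unjustified and can fail at type~$\mathrm{II}$ fibers. For example, over $\mathbb F_3$ the Weierstrass equation $y^2 = x^3 + tx + t$ is regular with a type~$\mathrm{II}$ fiber at $t=0$, yet $\ord_0(\Delta) = 3$, so by Ogg--Saito the conductor exponent is $3$ and the Swan conductor is $1$. Correspondingly your formula $N_1 + 2N_2 = 12d$ also fails in such cases, since a type~$\mathrm{II}$ fiber need not contribute exactly $2$ to the discriminant degree. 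The two errors happen to cancel, but your argument as written does not account for this. The paper sidesteps the issue by computing with the conductor directly: Raynaud's Euler-characteristic formula gives $\chi(\sce[\ell]) = 4 - \sum_v f_v(\sce[\ell])$, and Ogg's formula $f_v = \ord_v(\Delta) + 1 - m_v$ together with $m_v = 1$ (all N\'eron fibers integral) yields $\sum_v f_v = \deg\Delta = 12d$ in every characteristic prime to $\ell$. A smaller omission: you assert that the hypothesis of~(1) holds after checking only $\fcomp{E} = \id$, but you also need $H^0(\bp^1_k,\sce[n]) = 0$; since here $\sce = \sce^0$, this follows from \cite[Lemma~5.15]{de-jong:counting-elliptic-surfaces-over-finite-fields}, as the paper notes.
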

\begin{proof}
	If $n = \prod_{i=1}^m p_i^{j_i}$, for $p_i$ distinct primes, we can write $\sce[n] \simeq \oplus_{i=1}^m \sce[p_i^{j_i}]$ as a product of sheaves.
	Since cohomology commutes with direct sums, we can therefore reduce both parts to the case
	that $n = p^j$ for $p$ a prime.

	We now prove $(1)$, assuming $n = p^j$.
		Let $r$ denote the rank of $H^1(\bp^1_k, \sce[p])$. That is, say $H^1(\bp^1_k, \sce[p]) \simeq \left( \bz/p\bz \right)^r$.
	We will show $H^1(\bp^1_k, \sce[p^j])$ is a free $\bz/p^j\bz$ module of rank $r$.
	For $0 \leq t \leq j$, we claim there is an exact sequence of sheaves
	\begin{equation}
		\label{equation:exact-torsion}
		\begin{tikzcd}
			0 \ar {r} &   \sce[p^t] \ar {r} & \sce[p^j] \ar {r} & \sce[p^{j-t}] \ar {r} & 0.
		\end{tikzcd}\end{equation}
	This is left exact as it is a pushforward of an analogous sequence associated to $E$.
	To see it is right exact, it suffices to show $\times p^t : \sce \ra \sce$ is surjective.
	But the cokernel of this map is identified with $\fcomp {E}/p^t\fcomp {E}$ 
	(see \cite[p. 629, line 12]{colliot-theleneSSD:hasse-principle-for-pencils-of-curves}), which we are assuming is trivial.

	By assumption, for all $t \leq j$, 
	$H^0(\bp^1_k, \sce[p^t]) = 0$. 
	We claim that also
	$H^2(\bp^1_k, \sce[p^t]) = 0$.
	Indeed, for $\iota: \spec k(t) \ra \bp^1_k$ the inclusion of the generic point, we have an isomorphism
	$\iota_* E[p^t] \simeq \sce[p^t]$ on the small \'etale site of $\bp^1_k$, as follows from the N\'eron mapping property for $\sce$.
	Hence, we want to show $H^2(\bp^1_k, \iota_* E[p^t]) = 0$.
	By Poincar\'e duality 
	\cite[V Proposition 2.2(b)]{Milne:etaleBook} 
	we only need check $H^0(\bp^1_k, \iota_* ((E[p^t])^\vee (1)) ) = 0$.
	The Weil pairing on the smooth elliptic curve $E$ gives an isomorphism $(E[p^t])^\vee (1) \simeq E[p^t]$.
	Therefore, we indeed have
	\begin{align*}
		H^2(\bp^1_k, \iota_* E[p^t]) \simeq H^0(\bp^1_k, \iota_* ((E[p^t])^\vee (1)) )^\vee \simeq H^0(\bp^1_k, \iota_* E[p^t])^\vee \simeq H^0(\bp^1_k, \sce[p^t])^\vee = 0.
	\end{align*}

	Therefore, \eqref{equation:exact-torsion} induces an exact sequence on cohomology
	\begin{equation}
		\label{equation:reverse-mu-i}
		\begin{tikzcd}
			0 \ar {r} & H^1(\bp^1_k, \sce[p^t])  \ar {r}{\mu^t} &  H^1(\bp^1_k, \sce[p^j]) \ar {r}{\upsilon^t} & H^1(\bp^1_k, \sce[p^{j-t}])  \ar {r} & 0.
		\end{tikzcd}\end{equation}
	By induction on $t$, we see 
	$H^1(\bp^1_k, \sce[p^j])$ is a $\bz/p^j\bz$ module of size $p^{jr}$.
	We next show it is free of rank $r$.

	Since finite $\bz/p^j\bz$ modules are all sums of $\bz/p^t\bz$ for $t \leq j$, to conclude the proof, we only need to show
	that the kernel of the multiplication by $p^{j-1}$ map $H^1(\bp^1_k, \sce[p^j]) \xra{\times p^{j-1}} H^1(\bp^1_k, \sce[p^j])$
	has size $p^{(j-1)r}$.
	To this end, note that the multiplication map by $p^{j-1}$ map on coefficients $\phi : \sce[p^j] \ra \sce[p^j]$
	induces the multiplication map $\times p^{j-1}$ on cohomology. 
	Observe that $H^1(\phi)$ factors as $H^1(\bp^1_k, \sce[p^j]) \xra{\upsilon^{j-1}} H^1(\bp^1_k, \sce[p]) \xra{\mu^1} H^1(\bp^1_k,\sce[p^j]),$ with $\mu^t, \upsilon^t$ defined in
	\eqref{equation:reverse-mu-i}.
	Taking $t= 1$ in \eqref{equation:reverse-mu-i} shows $\mu^1$ is injective. Therefore, $\ker(\times p^{j-1}) =\ker (\upsilon^{j-1} \circ \mu^1) = \ker \upsilon^{j-1}$. 
	In turn, applying \eqref{equation:reverse-mu-i} again with $t= j-1$, we see $H^1(\bp^1_k, \sce[p^{j-1}]) =\ker \upsilon^{j-1}$.
	By induction, this has order $p^{(j-1)r}$.
	Therefore, multiplication by $p^{j-1}$ on $H^1(\bp^1_k, \sce[p^j])$ has kernel of order 
	$p^{(j-1)r}$, implying $H^1(\bp^1_k, \sce[p^j])$ must be a free $\bz/p^j\bz$ module of rank $r$. This finishes $(1)$.
	
	We now prove $(2)$.
	Since $E$ has smooth minimal Weierstrass model, all fibers of the N\'eron model are integral.
	Since we are assuming $d > 0$,
	$H^0(\bp^1_k, \sce[n]) = 0$
	by \cite[Lemma 5.15]{de-jong:counting-elliptic-surfaces-over-finite-fields}.
	Hence, part $(1)$ applies.
	So, it suffices to show $(2)$ in the case $n$ is prime.
	Let $f_v(\sce[n])$ denote the {\em exponent of the conductor} of $\sce[n]$ at $v$.
	By	
	\cite[Th\'eor\`eme 1]{raynaud:characteristic-deuler-poincare} (where $f_v$ is notated as $\varepsilon_v^R$)
	the Euler characteristic of $\sce[n]$ is
	$-\sum_{v \in \places {K(\bp^1_k)}} f_v(\sce[n]) + 2 \cdot 2$.
	Since $H^i(\bp^1_k, \sce[n]) = 0$ for $i \neq 1$, as we showed in the proof of (1),
	the Euler characteristic of $\sce[n]$ is negative the
	rank of
	$H^1(\bp^1_k, \sce[n]).$
	By \cite[Theorem 2]{ogg:elliptic-curves-and-wild-ramification}, since all fibers of the N\'eron model are integral as $E$ has smooth minimal Weierstrass model,
	the exponent of the conductor is equal to the degree of the discriminant.
	Hence,
		\[
			\rk_{\bz/n\bz} H^1(\bp^1_k, \sce[n])= \sum_{v \in \places {K(\bp^1_k)}} f_v(\sce[n]) - 2 \cdot 2= \deg \disc(E) -4 = 12d-4.\qedhere
\]
\end{proof}

The following corollary is essential for defining our monodromy representation later in \autoref{definition:monodromy-representation} to count
the number of irreducible components of the Selmer space.
\begin{corollary}
	\label{corollary:finite-etale-smooth-locus}
	Suppose $B$ is a noetherian scheme with $2n$ invertible and $d > 0$.
	Then, $\pi: \smsspace n d B \ra \smespace d B$
	is finite \'etale, representing a locally constant constructible sheaf of rank $12d-4$ free $\bz/n\bz$ modules.
\end{corollary}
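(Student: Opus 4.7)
The plan is to assemble the ingredients already established in this subsection. By \autoref{lemma:representability-sspace}, $\pi$ is étale, quasi-separated, and of finite type; by \autoref{lemma:selmer-space-is-almost-scheme}, the source is a scheme and $\pi$ is separated. What is left is to upgrade this to a finite étale morphism with fiber of the correct structure, which is a matter of checking constancy of the fiber cardinality and then packaging.

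First I would check that the geometric fibers of $\pi$ all have the same cardinality $n^{12d-4}$. For any geometric point $\ol{x}$ of $\smespace d B$, \autoref{lemma:selmer-space-stalks} identifies the fiber of $\sspace n d B$ over $\ol x$ with $H^1(\bp^1_{\ol x}, \sce_{\ol x}[n])$. Since $\ol{x} \in \smespace d B$, the minimal Weierstrass model of $E_{\ol x}$ is smooth, so by \autoref{remark:smooth-espace} all fibers of the N\'eron model $\sce_{\ol x}$ are geometrically integral (they are smooth, $\mathrm{I}_1$, or $\mathrm{II}$), whence the component group $\fcomp{E_{\ol x}}$ is trivial; in particular $\fcomp{E_{\ol x}}/n\fcomp{E_{\ol x}}$ is trivial. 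Moreover, $H^0(\bp^1_{\ol x}, \sce_{\ol x}[n]) = 0$ by \autoref{lemma:torsion-points-proper}, using $d > 0$. Both hypotheses of \autoref{lemma:constructible-rank}(1) are therefore satisfied on all of $\smespace d B$, and part (2) of the same lemma gives that each fiber is a free $\bz/n\bz$-module of rank $12d-4$.

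Having shown that $\pi$ is étale, separated, of finite type over a noetherian base, and has geometric fibers of constant cardinality $n^{12d-4}$, I would then conclude that $\pi$ is finite étale. One clean way is through Zariski's main theorem: a quasi-finite separated morphism of finite type to a noetherian base factors as an open immersion followed by a finite morphism, and constancy of geometric fiber degree combined with the fact that degree is upper semicontinuous on a finite morphism forces the open immersion to be surjective, hence an isomorphism. Alternatively one can appeal directly to the fact that a flat, quasi-finite, separated, finite-type morphism to a noetherian base with locally constant fiber degree is finite. Either way, $\pi$ finite étale corresponds to a locally constant constructible sheaf of sets on $\smespace d B$, and this sheaf inherits the free $\bz/n\bz$-module structure of rank $12d-4$ from the defining sheaf $\smssheaf n d B$ and the fiber computation above.

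No substantial new obstacle arises inside this corollary: the genuine content lies upstream in \autoref{lemma:selmer-space-is-almost-scheme} (separatedness, schematicity) and \autoref{lemma:constructible-rank} (freeness and rank). The only care needed is to check that both hypotheses of \autoref{lemma:constructible-rank}(1) hold uniformly across $\smespace d B$, which, as indicated, follows from \autoref{remark:smooth-espace} together with \autoref{lemma:torsion-points-proper}.
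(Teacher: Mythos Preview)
Your proposal is correct and follows essentially the same approach as the paper: establish separatedness via \autoref{lemma:selmer-space-is-almost-scheme}, identify the geometric fibers using \autoref{lemma:selmer-space-stalks} and compute their size as $n^{12d-4}$ via \autoref{lemma:constructible-rank}, then deduce finiteness from a constancy-of-degree criterion. The paper cites \cite[II, Lemma 1.19]{deligneR:les-schemas-de-modules-de-courbes-elliptiques} directly for this last step, whereas you sketch the same argument through Zariski's main theorem; these are equivalent, and your explicit verification of the hypotheses of \autoref{lemma:constructible-rank}(1) is a welcome bit of care that the paper leaves implicit.
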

\begin{proof}
	We first show $\pi$ is finite.
	By 
\cite[II, Lemma 1.19]{deligneR:les-schemas-de-modules-de-courbes-elliptiques},
a quasi-finite flat separated morphism over a noetherian base scheme with constant fiber rank is finite.
Recalling our notational conventions from \autoref{subsubsection:elliptic-curve-notation},
the fiber of $\pi$ over a geometric point $\ol x \in \smespace d B$ is identified with $H^1(\bp^1_{\ol x}, \sce_{\ol x}[n])$ by \autoref{lemma:selmer-space-stalks}.
It follows that $\pi$
has constant fiber rank $n^{12d-4}$ by \autoref{lemma:constructible-rank}.
Further, $\pi$ is separated by \autoref{lemma:selmer-space-is-almost-scheme}.
Therefore, $\pi$ is finite.

Also, $\pi$ is \'etale by \autoref{lemma:representability-sspace}.
Since $\pi$ is finite \'etale, it represents a locally constant constructible sheaf of
rank $12d-4$ free $\bz/n\bz$ modules by \autoref{lemma:constructible-rank}.
\end{proof}

\subsection{Points of the Selmer space}
\label{subsection:selmer-space-points}

The main point of introducing the $n$-Selmer space is that points of the Selmer space parameterize elements of Selmer groups of the corresponding elliptic curves,
as we show in this subsection.

The two main results of this section are \autoref{corollary:equality-selmer-fibers} and \autoref{corollary:uniform-bound-selmer-fibers}.
These are the only two results of this section we will need for the proof of
\autoref{theorem:number-components}.
The first shows that the size of the Selmer group agrees with the number of $\mathbb F_q$ points of the fibers of $\smsspace n d B \ra \smespace d B$.
The second gives a uniform bound for the size of the Selmer group in terms of the number of $\mathbb F_q$ points of the fibers of $\sspace n d B \ra \espace d B$.
To prove these results, we will relate the number of $\mathbb F_q$ points of the fiber over $x \in \espace d B(\mathbb F_q)$ to $H^1(\bp^1_x, \sce_x[n])$
in \autoref{proposition:selmer-space-and-h1}.

\begin{proposition}
	\label{proposition:selmer-space-and-h1}
	With notation as in \autoref{subsubsection:elliptic-curve-notation}, suppose $d > 0$ and $x \in \espace d B(\mathbb F_q)$. Then, 
\begin{align}
		\label{equation:selmer-as-product-points}
		\# H^1(\bp^1_x, \sce^0_x[n]) = 
		\# \left(\pi^{-1}(x)\left( \mathbb F_q \right) \right).
	\end{align}
	If $d = 0$, we have $\# H^1(\bp^1_x, \sce^0_x[n]) = \# H^0(\bp^1_x, \sce^0_x[n])$.
	\end{proposition}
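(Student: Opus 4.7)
The plan is to identify both sides as cohomological quantities and relate them via the Leray spectral sequence for the structure map $g_x : \bp^1_x \to x$. By \autoref{lemma:representability-sspace} and proper base change, the restriction $\ssheaf n d B|_x$ is the sheaf $R^1 g_{x,*}\mathcal F$ on $x$, where $\mathcal F := R^1 f_{x,*} \mu_n \simeq \sce^0_x[n]$ (the second isomorphism from \autoref{lemma:selmer-space-stalks}(1)); consequently $\#\pi^{-1}(x)(\mathbb F_q) = \# H^0(x, R^1 g_{x,*} \mathcal F)$.

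For $d > 0$, I would apply the five-term exact sequence attached to the Leray spectral sequence for $g_x$:
\[
0 \to H^1(x, g_{x,*} \mathcal F) \to H^1(\bp^1_x, \sce^0_x[n]) \to H^0(x, R^1 g_{x,*} \mathcal F) \to H^2(x, g_{x,*} \mathcal F).
\]
The proof then reduces to showing $g_{x,*} \mathcal F = H^0(\bp^1_x, \sce^0_x[n]) = 0$. This is essentially the content of \autoref{lemma:torsion-points-proper}, although that lemma is stated only for $x \in \smespace d B$. The key input of its proof, however, is de Jong's Lemma 5.15 asserting that when $d > 0$ the only section $\bp^1_{\ol x} \to \sce^0_{\ol x}[n]$ is the zero section, which applies to any minimal Weierstrass model, smooth or not; the vanishing at the level of $\bp^1_x$ then follows since a section over $\bp^1_x$ pulls back to a section over $\bp^1_{\ol x}$. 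Hence both outer terms in the sequence vanish and the middle map is an isomorphism.

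For the auxiliary $d = 0$ statement, the Weierstrass coefficients are constants in $\mathbb F_q$ and the discriminant is a nonzero constant by minimality, so $E_x$ descends to a smooth elliptic curve over $\mathbb F_q$, $\sce^0_x \simeq E_x \times_x \bp^1_x$ is the constant N\'eron model, and $\sce^0_x[n]$ is pulled back from the finite \'etale $\mathbb F_q$-group scheme $E_x[n]$. Since $\bp^1_{\ol x}$ is geometrically simply connected and $n$ is invertible on $B$, $R^1 g_{x,*} \sce^0_x[n] = 0$, so Leray yields $H^1(\bp^1_x, \sce^0_x[n]) \simeq H^1(x, E_x[n])$ while $H^0(\bp^1_x, \sce^0_x[n]) = H^0(x, E_x[n])$. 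The cardinalities agree because the Frobenius endomorphism on any finite abelian group has kernel and cokernel of the same order. The main obstacle I anticipate is cleanly organizing the various base-change identifications; once the cohomological framing is in place, the vanishing conditions follow either from \autoref{lemma:torsion-points-proper} (and de Jong's lemma) or from elementary features of Galois cohomology over $\mathbb F_q$.
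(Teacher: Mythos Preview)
Your proposal is correct and follows essentially the same approach as the paper: both arguments identify $\pi^{-1}(x)(\mathbb F_q)$ with $H^0(x,R^1g_{x,*}\sce^0_x[n])$ and then run the five-term exact sequence of the Leray spectral sequence for $g_x$ (the paper phrases this as Hochschild--Serre for $G_x=\gal(\ol{\mathbb F}_q/\mathbb F_q)$, which is the same thing over $x=\spec\mathbb F_q$), killing the outer terms via de Jong's Lemma 5.15 when $d>0$ and reducing to the equality $\#H^0_{\grp}=\#H^1_{\grp}$ for $\hat\bz$ acting on a finite module when $d=0$. Your remark that \autoref{lemma:torsion-points-proper} is only stated over $\smespace d B$ but that its underlying input (de Jong's lemma) applies to any minimal Weierstrass model is exactly right; the paper likewise invokes \cite[Lemma 5.15]{de-jong:counting-elliptic-surfaces-over-finite-fields} directly at this step.
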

\begin{proof}
	To start, we give a cohomological description of 
	$\# \pi^{-1}(x)\left( \mathbb F_q \right)$.
	By \autoref{lemma:selmer-space-stalks},
	the geometric fiber of $\sspace n d B$ over $\ol x$ is $H^1(\bp^1_{\ol x}, \sce^0_{\ol x}[n])$.
To distinguish between \'etale and group cohomology, we use
	$H^i_{\grp}$ denote group cohomology and $H^i_{\et}$ to denote \'etale cohomology.
	Let $G_x := \aut(\bp^1_{\ol{x}}/\bp^1_{x})$. 
	The $\mathbb F_q$ points of $\pi^{-1}(x)$ are the $G_x$ invariants of $H^1_{\et}(\mathbb P^1_{\ol{x}}, \sce^0_{\ol{x}}[n])$. 
	That is, 
	$\pi^{-1}(x) (\mathbb F_q) = H_{\grp}^0(G_x, H^1_{\et}( \bp^1_{\ol {x}}, \sce^0_{\ol{x}}[n]))$.
	
	We relate this group to $H^1(\bp^1_x, \sce^0_x[n])$ using the Leray spectral sequence
%(see, for example, \cite[Theorem 6.7.5]{poonen:rational-points-on-varieties}) 
	\begin{equation}
		\label{equation:selmer-to-points}
\begin{tikzpicture}[baseline= (a).base]
\node[scale=.84] (a) at (0,0){
	\begin{tikzcd}[column sep = tiny]
		0 \ar {r} & H^1_{\grp}( G_x, H^0_{\et}(\bp^1_{\ol x}, \sce^0_{\ol{x}}[n])) \ar {r} & H^1_{\et}( \bp^1_x, \sce^0[n]) \ar {r}{\theta} & H^0_{\grp}( G_x, H^1_{\et}( \bp^1_{\ol x}, \sce^0_{\ol{x}}[n]))  \ar {r} & H^2_{\grp}( G_x, H^0_{\et}(\bp^1_{\ol x}, \sce^0_{\ol{x}}[n])).
	\end{tikzcd}};
\end{tikzpicture}
\end{equation}
When $d > 0$,
we want to show $\theta$ is an isomorphism, so it suffices to 
show
$H^0_{\et}(\bp^1_{\ol x}, \sce^0_{\ol{x}}[n]) = 0$.
This holds by \cite[Lemma 5.15]{de-jong:counting-elliptic-surfaces-over-finite-fields},
applicable as $d > 0$.

Finally, the statement for $d = 0$ holds because $\sce^0_{\ol x}[n] \ra \bp^1_{\ol x}$ is finite so
$H^1_{\et}(\bp^1_{\ol x}, \sce^0_{\ol x}[n]) = 0$ and
\begin{align*}
	\# H^1_{\grp}(G_{x}, H^0_{\et}(\bp^1_{\ol x}, \sce^0_{\ol x}[n])) &= \# H^0_{\grp}(G_x, H^0_{\et}(\bp^1_{\ol x}, \sce^0_{\ol x}[n])) = \# H^0_{\et}(\bp^1_x, \sce_x^0[n]).\qedhere
\end{align*}
\end{proof}
Using \autoref{proposition:selmer-space-and-h1} we obtain the following precise 
relation between $\mathbb F_q$ points of the fiber of $\sspace n d B\ra \espace d B$
over $x$
when $x \in \smespace d B$.

\begin{corollary}
	\label{corollary:equality-selmer-fibers}
	With notation as in \autoref{subsubsection:elliptic-curve-notation}, suppose that $d > 0$.
	If $x \in \smespace d B(\mathbb F_q)$,
	\begin{align}
		\label{equation:selmer-equality-on-open}
		\# \sel_n(E_x) = \# \left(\pi^{-1}(x)\left( \mathbb F_q \right) \right).
	\end{align}
\end{corollary}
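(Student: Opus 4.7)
The plan is to reduce the corollary directly to Proposition~\ref{proposition:selmer-space-and-h1} together with a standard identification of $H^1(\bp^1_x,\sce^0_x[n])$ with $\sel_n(E_x)$ in the case where the Weierstrass model is smooth. By Proposition~\ref{proposition:selmer-space-and-h1}, we already have
\[
\#(\pi^{-1}(x)(\mathbb F_q)) = \# H^1(\bp^1_x,\sce^0_x[n]),
\]
so the content of the corollary is the equality $\# H^1(\bp^1_x,\sce^0_x[n]) = \# \sel_n(E_x)$.

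The first step is to observe that the smoothness hypothesis $x \in \smespace d B$ trivializes the component groups. By Remark~\ref{remark:smooth-espace}, for $x \in \smespace d B$ the minimal Weierstrass model $W_x$ equals the minimal regular proper model, and every fiber of $\sce_x \to \bp^1_x$ is integral. Hence $\sce^0_x = \sce_x$ and $\comp{E_x}{v}$ is trivial at every closed point $v \in \bp^1_x$; in particular $\fcomp{E_x} = 0$. So it suffices to compare $H^1(\bp^1_x,\sce_x[n])$ with $\sel_n(E_x)$.

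For that comparison, I would use the N\'eron mapping property to write $\sce_x[n] \simeq \iota_* E_x[n]$ on the small \'etale site of $\bp^1_x$, where $\iota : \spec K(\bp^1_x) \hookrightarrow \bp^1_x$ is the generic point. The Leray spectral sequence for $\iota$ gives an injection
\[
H^1(\bp^1_x,\sce_x[n]) \hookrightarrow H^1(K(\bp^1_x), E_x[n]),
\]
whose image consists of cohomology classes that are unramified at every place $v$, i.e.\ those lying in $H^1(\sco_{x,v}^{\sh},\sce_{x}[n])$ locally. The standard local analysis (for instance the argument in \cite{colliot-theleneSSD:hasse-principle-for-pencils-of-curves}) identifies the local unramified classes with the kernel of $H^1(K(\bp^1_x)_v, E_x[n]) \to H^1(K(\bp^1_x)_v, E_x)$ precisely up to a contribution from $\comp{E_x}{v}/n\comp{E_x}{v}$; since these component groups vanish, the image is exactly $\sel_n(E_x)$.

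The only potentially subtle step is the local comparison at each place of bad (but integral) reduction (types $\mathrm{I}_1$ and $\mathrm{II}$), where one must verify that the unramified cohomology of $\sce_x[n]$ coincides with the local Selmer condition. This reduces to the vanishing of $H^1(\sco_{x,v}^{\sh}, \sce_x[n]) \to H^1(K(\bp^1_x)_v, E_x)$, which follows from the triviality of the component group together with standard results on cohomology of N\'eron models of elliptic curves over strictly henselian discrete valuation rings. Once this local calculation is assembled, combining it with the Leray-type injection yields $H^1(\bp^1_x,\sce_x[n]) = \sel_n(E_x)$, completing the proof.
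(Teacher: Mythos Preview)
Your proposal is correct and follows essentially the same route as the paper: both reduce to Proposition~\ref{proposition:selmer-space-and-h1} together with the identification $\sel_n(E_x) \simeq H^1(\bp^1_x,\sce_x[n])$ when the component groups vanish, and both use Remark~\ref{remark:smooth-espace} to see that $\sce_x^0 = \sce_x$ in this case. The only difference is packaging: where you sketch the Leray/local-Selmer comparison by hand (invoking \cite{colliot-theleneSSD:hasse-principle-for-pencils-of-curves}), the paper simply cites \cite[Proposition~5.4(c)]{cesnavicius:selmer-groups-as-flat-cohomology-groups} as a black box for the statement that $\sel_n(E_x) \simeq H^1(\bp^1_x,\sce_x[n])$ once $\fcomp{E_x}$ is trivial, and then appeals to Lemma~\ref{lemma:selmer-space-stalks}(1) for $\sce_x[n]\simeq\sce_x^0[n]$.
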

\begin{proof}
	Recall $\# \fcomp {E_x} = 1$ for $x \in \smespace d B(\mathbb F_q)$ by \autoref{remark:smooth-espace}. It follows from 
	\cite[Proposition 5.4(c)]{cesnavicius:selmer-groups-as-flat-cohomology-groups} that
	$\sel_n(E_x) \simeq H^1(\bp^1_x, \sce_x[n])$.
Here we are using the identification between  \'etale and fppf cohomology from \cite[Th\'eor\`eme 11.7 $1^{\circ}$]{grothendieck:brauer-iii}.
By \autoref{lemma:selmer-space-stalks}(1), $H^1(\bp^1_x, \sce_x[n]) \simeq H^1(\bp^1_x, \sce_x^0[n])$ for $x \in \smespace d B$,
and so the claim then follows from 
	\autoref{proposition:selmer-space-and-h1}.
\end{proof}

\subsubsection{An upper bound for sizes of Selmer groups}
\label{subsubsection:upper-bound}
Our only remaining goal in this section is to prove \autoref{corollary:uniform-bound-selmer-fibers}
which gives a uniform bound on the size of the Selmer group in terms of 
the number of $\mathbb F_q$ points of the fiber of $\sspace n d B\ra \espace d B$ over $x$.
This will be useful for counting Selmer elements associated to $x \notin \smespace d B$.
This will follow fairly immediately from \autoref{proposition:uniform-bound-selmer-fibers} which
bounds the size of the Selmer group in terms of the size of
$H^1(C, \sce^0[n])$ (which we relate to the Selmer space via \autoref{proposition:selmer-space-and-h1}).

To state \autoref{proposition:uniform-bound-selmer-fibers} we introduce
some notation.

\begin{notation}
	\label{notation:selmer-comparison}
	Fix $n \geq 1$ and a finite field $\mathbb F_q$ with $(q,n) = 1$.
	Let $C$ be a smooth proper geometrically connected curve over $\mathbb F_q$.
	Let $E$ be an elliptic curve over the function field $K(C)$.
	Let $\sce$ denote the N\'eron model of $E$ over $C$ with identity component
	$\sce^0$.
Let $X$ denote the minimal regular proper model of $E$ over $C$.
\end{notation}

In this paper, we will only use the following \autoref{proposition:uniform-bound-selmer-fibers}
over $C = \bp^1_{\mathbb F_q}$. However, we state it for general
curves $C$ over $\mathbb F_q$ as the proof is no harder.
We note that there appears to be a gap in the proof of the closely related \cite[Proposition 4.3.5]{ho-lehung-ngo:average-size-of-2-selmber-groups}
at the point where the proof of \cite[Proposition 4.3.4]{ho-lehung-ngo:average-size-of-2-selmber-groups} is referenced.
We would like to thank Bao L\^e H\`ung for confirming that the $n = 2$ case of
\autoref{proposition:uniform-bound-selmer-fibers} provides an alternate proof of
\cite[Proposition 4.3.5]{ho-lehung-ngo:average-size-of-2-selmber-groups}.

\begin{proposition}
	\label{proposition:uniform-bound-selmer-fibers}
With notation as in \autoref{notation:selmer-comparison},
	\begin{align}
		\label{equation:selmer-uniform-bound}
		\# \sel_n(E) \leq 
		\# H^0\left( C, \sce[n] \right) \cdot
		\# H^1(C, \sce^0[n]).
	\end{align}
\end{proposition}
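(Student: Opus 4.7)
The plan is to bound $\#\sel_n(E)$ by first identifying it with $H^1(C,\sce[n])$, and then bounding $\#H^1(C,\sce[n])$ in terms of $\#H^1(C,\sce^0[n])$ via the component-group filtration of $\sce$.

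\emph{Step 1 (Selmer as étale cohomology of $\sce[n]$).} Since $n$ is invertible on $C$, the group scheme $\sce[n]$ is étale, so fppf and étale cohomology of $\sce[n]$ agree. By the Néron mapping property $\sce = j_*E$ (where $j:\spec K(C)\hookrightarrow C$), we also have $\sce[n]=j_*E[n]$. I would invoke \cite[Proposition 5.4]{cesnavicius:selmer-groups-as-flat-cohomology-groups} to obtain $\sel_n(E)\simeq H^1_{\mathrm{fppf}}(C,\sce[n])=H^1(C,\sce[n])$, so $\#\sel_n(E)=\#H^1(C,\sce[n])$.

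\emph{Step 2 (Snake lemma for the component-group filtration).} Starting from the short exact sequence of étale sheaves $0\to\sce^0\to\sce\to\sce/\sce^0\to 0$, I would apply the snake lemma to multiplication by $n$. Since $\sce^0$ is smooth with geometrically connected fibers and $n$ is invertible on $C$, multiplication by $n$ is étale-surjective on $\sce^0$, so $\sce^0/n\sce^0=0$ as an étale sheaf. The snake lemma then produces the short exact sequence
\begin{equation*}
0\to\sce^0[n]\to\sce[n]\to(\sce/\sce^0)[n]\to 0.
\end{equation*}

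\emph{Step 3 (Bounding $H^1(C,\sce[n])$).} Write $\Phi=(\sce/\sce^0)[n]$; this is a skyscraper sheaf at the finitely many bad-reduction places $v$, with stalks that are finite Galois modules over the finite residue field $\kappa(v)$. For any finite $\gal(\overline{\kappa(v)}/\kappa(v))$-module $M$, the exact sequence $0\to M^G\to M\xrightarrow{\sigma-1}M\to M_G\to 0$ gives $\#H^0(\kappa(v),M)=\#H^1(\kappa(v),M)$, so summing over places yields $\#H^1(C,\Phi)=\#H^0(C,\Phi)$. A direct chase through the long exact sequence
\begin{equation*}
H^0(C,\sce^0[n])\hookrightarrow H^0(C,\sce[n])\xrightarrow{\alpha}H^0(C,\Phi)\to H^1(C,\sce^0[n])\to H^1(C,\sce[n])\to H^1(C,\Phi)
\end{equation*}
then yields
\begin{equation*}
\#H^1(C,\sce[n])\leq \#H^1(C,\sce^0[n])\cdot\frac{\#H^0(C,\sce[n])}{\#H^0(C,\sce^0[n])}\leq \#H^1(C,\sce^0[n])\cdot\#H^0(C,\sce[n]),
\end{equation*}
where the final inequality uses $\#H^0(C,\sce^0[n])\geq 1$. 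Combining with Step~1 gives the desired bound.

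\emph{Main obstacle.} The most delicate point is Step~1: verifying that $\sel_n(E)\simeq H^1(C,\sce[n])$ holds unconditionally for $n$ invertible on $C$, without extra assumptions on the component groups $\comp{E}{v}$. If Cesnavicius's result requires additional hypotheses, a self-contained alternative is to use the Leray spectral sequence for $j:\spec K(C)\hookrightarrow C$ combined with a careful local analysis comparing the Selmer local condition (image of $E(K_v)/n$) with the unramified local condition (image of $H^1(\sco_v,\sce[n])$); any defect at bad-reduction places is bounded by a subquotient of $\comp{E}{v}[n]$, which can then be absorbed into the $\#H^0(C,\sce[n])$ factor by the same cohomology chase as in Step~3.
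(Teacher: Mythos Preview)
Your Steps 2 and 3 are fine, and the inequality $\#H^1(C,\sce[n])\leq \#H^0(C,\sce[n])\cdot\#H^1(C,\sce^0[n])$ is correct. The gap is Step~1: the identification $\sel_n(E)\simeq H^1(C,\sce[n])$ is \emph{not} unconditional. \v{C}esnavi\v{c}ius's Proposition~5.4(c) requires a hypothesis on the component groups, and indeed the paper itself only invokes it for $x\in\smespace d B$, where $\fcomp{E_x}=1$. For general $E$, at a bad place $v$ the Selmer local condition $\im(E(K_v)/n\hookrightarrow H^1(K_v,E[n]))$ and the unramified condition $\ker(H^1(K_v,E[n])\to H^1(K_v^{\sh},E[n]))$ are two subgroups of the same size $\#E[n](K_v)$, but neither contains the other: the obstruction to $L_v\subset L_v'$ is $\Phi_v(\bar\kappa(v))/n$, and the obstruction to $L_v'\subset L_v$ comes from $H^1(\kappa(v),\Phi_v)$. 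So $\sel_n(E)$ and $H^1(C,\sce[n])$ sit inside $H^1(K,E[n])$ as incomparable subgroups in general.

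Your proposed workaround does not close this gap. The total defect is governed by $\prod_v\#\comp{E}{v}[n]=\#H^0(C,\Phi[n])$, which can be arbitrarily large, whereas $\#H^0(C,\sce[n])\leq n^2$. The cancellation $\#H^0(\Phi[n])=\#H^1(\Phi[n])$ in your Step~3 chase absorbs exactly one factor of $\#H^0(\Phi[n])$; it cannot absorb a second one coming from a Step~1 defect. The paper's route is genuinely different: rather than compare $n$-torsion sheaves, it compares the two exact sequences
\[
0\to E(K)/n\to\sel_n(E)\to\Sha(E)[n]\to 0,\qquad 0\to \sce^0(C)/n\to H^1(C,\sce^0[n])\to H^1(C,\sce^0)[n]\to 0,
\]
matches the left-hand terms via a snake-lemma count on $H^0(C,\sce^0)\subset H^0(C,\sce)$ (this is where the factor $\#H^0(\sce[n])/\#H^0(\sce^0[n])$ appears), and bounds the right-hand terms using the unconditional identification $\Sha(E)=\im(H^1(C,\sce^0)\to H^1(C,\sce))$ together with the sequence $0\to H^0(C,\sce/\sce^0)/H^0(C,\sce)\to H^1(C,\sce^0)\to\Sha(E)\to 0$.
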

\begin{proof}[Proof assuming \autoref{lemma:selmer-ratio-equality} and \autoref{lemma:pic-and-pic0-inequality}]
	By plugging in \autoref{lemma:pic-and-pic0-inequality} into \autoref{lemma:selmer-ratio-equality},
	and using the trivial inequality $H^0(C, \sce^0[n]) \geq 1$, we conclude $\frac{\# \sel_n(E)}{\# H^1(C, \sce^0[n])} \leq \# H^0(C, \sce[n])$ and hence
	\eqref{equation:selmer-uniform-bound} holds.
\end{proof}
See \autoref{example:selmer-inequality-achieved} for an example where 
equality in \autoref{proposition:uniform-bound-selmer-fibers} is achieved with $H^0(C, \sce[n]) \neq 0$.
Before proving \autoref{lemma:selmer-ratio-equality} and \autoref{lemma:pic-and-pic0-inequality}, we deduce a corollary we will need to relate points of the Selmer space to 
elements of Selmer groups.
\begin{corollary}
	\label{corollary:uniform-bound-selmer-fibers}
	Let $d > 0$.
	With notation as in \autoref{subsubsection:elliptic-curve-notation}, suppose $x \in \espace d B(\mathbb F_q)$. Then,
	\begin{align}
		\label{equation:selmer-numerical-bound}
		\# \sel_n(E_x) \leq 
		\# H^0\left( \bp^1_x, \sce_x[n] \right) \cdot
		\#\left(\pi^{-1}(x)\left( \mathbb F_q \right)\right)
		\leq n^2\cdot
		\#\left(\pi^{-1}(x)\left( \mathbb F_q \right)\right).
	\end{align}
\end{corollary}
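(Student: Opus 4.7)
The plan is to derive the corollary by chaining together the two propositions immediately preceding it. Starting from Proposition \ref{proposition:uniform-bound-selmer-fibers} applied to the smooth proper geometrically connected curve $C = \bp^1_x$ over $\mathbb F_q$ and the elliptic curve $E = E_x$ over its function field $K(\bp^1_x)$, one obtains
\begin{align*}
\# \sel_n(E_x) \leq \# H^0(\bp^1_x, \sce_x[n]) \cdot \# H^1(\bp^1_x, \sce_x^0[n]).
\end{align*}
The hypothesis $d > 0$ in the corollary matches the hypothesis of Proposition \ref{proposition:selmer-space-and-h1}, so that proposition supplies the identification $\# H^1(\bp^1_x, \sce_x^0[n]) = \# (\pi^{-1}(x)(\mathbb F_q))$. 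Substituting this into the displayed inequality yields the first inequality of \eqref{equation:selmer-numerical-bound}.

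For the second inequality, I would bound $\# H^0(\bp^1_x, \sce_x[n])$ by $n^2$ using the N\'eron mapping property. Concretely, a global section of $\sce_x[n] \to \bp^1_x$ is the same as an $n$-torsion section of the N\'eron model $\sce_x$, which by the N\'eron mapping property is determined by its restriction to the generic point, giving an injection
\begin{align*}
H^0(\bp^1_x, \sce_x[n]) \hookrightarrow E_x(K(\bp^1_x))[n].
\end{align*}
Since $n$ is invertible on $B$ (and hence on $\mathbb F_q$), the full geometric $n$-torsion $E_x[n](\overline{K(\bp^1_x)})$ is a free $\bz/n\bz$-module of rank $2$, so it has exactly $n^2$ elements, and the Galois-invariant subgroup $E_x(K(\bp^1_x))[n]$ is a subgroup of order at most $n^2$. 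Combining these gives $\# H^0(\bp^1_x, \sce_x[n]) \leq n^2$, which produces the second inequality in \eqref{equation:selmer-numerical-bound}.

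Neither step poses a serious obstacle: the first inequality is a direct assembly of the preceding proposition and the stalk computation, and the second is a standard consequence of the N\'eron mapping property together with the structure of the geometric $n$-torsion of an elliptic curve. The only mild subtlety is ensuring that the hypotheses of Proposition \ref{proposition:selmer-space-and-h1} (namely $d > 0$) and of Proposition \ref{proposition:uniform-bound-selmer-fibers} (the setup in \autoref{notation:selmer-comparison}, which applies with $C = \bp^1_x$) are both in force, both of which are immediate from the hypotheses of the corollary.
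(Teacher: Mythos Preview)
Your proof is correct and follows essentially the same approach as the paper: combine \autoref{proposition:uniform-bound-selmer-fibers} with \autoref{proposition:selmer-space-and-h1} for the first inequality, and bound $\# H^0(\bp^1_x, \sce_x[n])$ via $E_x[n](\mathbb F_q(t)) \leq n^2$ for the second. The paper states the N\'eron mapping property step as an equality $\# H^0(\bp^1_x, \sce_x[n]) = \# E_x[n](\mathbb F_q(t))$ rather than just an injection, but your weaker statement suffices for the bound.
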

\begin{proof}
	The first inequality follows by plugging in
	the result of
	\autoref{proposition:selmer-space-and-h1}
	to \autoref{proposition:uniform-bound-selmer-fibers}.
	The second inequality holds as 
	$\# H^0\left( \bp^1_x, \sce_x[n] \right) = \# E_x[n](\mathbb F_q(t)) \leq n^2$.
\end{proof}

To finish the proof of 
\autoref{proposition:uniform-bound-selmer-fibers}, we now prove
\autoref{lemma:selmer-ratio-equality} and \autoref{lemma:pic-and-pic0-inequality}.

\begin{lemma}
	\label{lemma:selmer-ratio-equality}
	With notation as in \autoref{notation:selmer-comparison},
	\begin{align}
		\label{equation:selmer-ratio-equality}
		\frac{\#\sel_n(E)}{\#H^1(C, \sce^0[n])} = \frac{ \# H^0(C, \sce[n])}{ \# H^0(C, \sce^0[n])}\cdot \frac{\# \Sha(E)[n]}{\# H^1(C, \sce^0)[n]}.
		\end{align}
\end{lemma}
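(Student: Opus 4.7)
The plan is to factor both sides of \eqref{equation:selmer-ratio-equality} as products of orders of finite groups coming from multiplication-by-$n$ exact sequences, and then to reconcile the two factorizations via the snake lemma applied to the component-group sequence $0 \to \sce^0 \to \sce \to \Phi \to 0$.

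First I would handle the two factors on the left separately. For the numerator, the standard exact sequence from Kummer theory over $K(C)$,
\begin{equation*}
0 \to E(K(C))/nE(K(C)) \to \sel_n(E) \to \Sha(E)[n] \to 0,
\end{equation*}
gives $\#\sel_n(E) = \#(E(K(C))/n) \cdot \#\Sha(E)[n]$. For the denominator, since $\sce^0$ is a smooth commutative group scheme over $C$ and $n$ is invertible on $C$ (as $\gcd(q,n)=1$ and $C$ is an $\mathbb F_q$-scheme), multiplication by $n$ on $\sce^0$ is étale and surjective as a map of étale sheaves. The resulting short exact sequence $0 \to \sce^0[n] \to \sce^0 \xrightarrow{n} \sce^0 \to 0$ and its cohomology long exact sequence yield the identity $\#H^1(C, \sce^0[n]) = \#(H^0(C, \sce^0)/n) \cdot \#H^1(C, \sce^0)[n]$.

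Using the N\'eron mapping property $E(K(C)) = H^0(C, \sce)$, the two factorizations above reduce the target identity to the comparison
\begin{equation*}
\frac{\#(H^0(C, \sce)/n)}{\#(H^0(C, \sce^0)/n)} = \frac{\#H^0(C, \sce[n])}{\#H^0(C, \sce^0[n])}.
\end{equation*}
To prove this, take $H^0(C,-)$ of $0 \to \sce^0 \to \sce \to \Phi \to 0$ to obtain $0 \to H^0(C, \sce^0) \to H^0(C, \sce) \to \Psi \to 0$, where $\Psi$ is the image in $H^0(C, \Phi) \subset \fcomp E$; in particular $\Psi$ is a finite abelian group. Applying the snake lemma to multiplication by $n$ on this short exact sequence produces a six-term exact sequence whose alternating product of orders, combined with the standard equality $\#\Psi[n] = \#\Psi/n$ for finite abelian $\Psi$, yields exactly the displayed ratio (using also $H^0(C, \sce[n]) = H^0(C, \sce)[n]$ by left exactness of $H^0$). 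Combining this with the two factorizations in the previous paragraph produces the claimed identity.

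There is no genuine obstacle: the two verifications requiring slight care are étale surjectivity of $n$ on $\sce^0$ (immediate from smoothness of $\sce^0$ together with invertibility of $n$ on $C$) and the identification $E(K(C)) = H^0(C, \sce)$ from the N\'eron mapping property. Both are standard, and the algebraic manipulation is purely a matter of combining the three exact sequences (Kummer for $E/K(C)$, Kummer for $\sce^0/C$, and the snake lemma on the component-group sequence) in the order indicated.
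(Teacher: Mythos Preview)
Your proof is correct and follows essentially the same approach as the paper: both use the Kummer/descent sequence for $\sel_n(E)$, the multiplication-by-$n$ sequence on $\sce^0$, and then the snake lemma applied to multiplication by $n$ on the short exact sequence $0 \to H^0(C,\sce^0) \to H^0(C,\sce) \to H^0(C,\sce)/H^0(C,\sce^0) \to 0$, using finiteness of the quotient to conclude. The only cosmetic difference is that you name the quotient $\Psi$ and note it embeds in $H^0(C,\Phi)$, whereas the paper works directly with $H^0(C,\sce)/H^0(C,\sce^0)$.
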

\begin{proof}
	The proof will be a sequence of diagram chases.
To start, observe that both 
$\#\sel_n(E)$ and $\#H^1(C, \sce^0[n])$ are finite, the former by 
\cite[Theorem 8.4.6]{poonen:rational-points-on-varieties}
and the latter by \autoref{proposition:selmer-space-and-h1} (being the number of $\mathbb F_q$ points of the fiber of a quasi-finite map).

We claim there are exact sequences
\begin{equation}
		\label{equation:selmer-to-sha}
	\begin{tikzcd}[column sep = small]
		0 \ar {r} & \frac{H^0(C, \sce)}{nH^0(C, \sce)} \ar {r} & \sel_n(E)  \ar {r} & \Sha(E)[n] \ar {r} & 0 \\
\end{tikzcd}\end{equation}
\vspace{-1cm}
\begin{equation}
		\label{equation:sspace-to-pic}
	\begin{tikzcd}[column sep = small]
		0 \ar {r} & \frac{H^0(C, \sce^0)}{nH^0(C, \sce^0)} \ar {r} & H^1(C, \sce^0[n]) \ar {r} & H^1(C, \sce^0)[n] \ar {r} & 0.
\end{tikzcd}\end{equation}
Indeed, \eqref{equation:selmer-to-sha} follows from \cite[Theorem 4.2(a)]{Silverman:AEC} 
(whose proof is exactly the same over global fields as over number fields)
while \eqref{equation:sspace-to-pic} comes from taking cohomology associated to the multiplication by $n$ sequence on $\sce^0$.

By applying both \eqref{equation:selmer-to-sha} and \eqref{equation:sspace-to-pic}, we find
\begin{align}
		\label{equation:selmer-ratio-to-npic-and-sha}
		\frac{\#\sel_n(E)}{\#H^1(C, \sce^0[n])} = \frac{ \# \frac{H^0(C, \sce)}{nH^0(C, \sce)}}{ \# \frac{H^0(C, \sce^0)}{n H^0(C, \sce^0)}}\cdot \frac{\#\Sha(E)[n]}{\# H^1(C, \sce^0)[n]}.
\end{align}
To conclude, it suffices to check 
\begin{align}
	\label{equation:neron-ratio}
\frac{ \# \frac{H^0(C, \sce)}{nH^0(C, \sce)}}{ \# \frac{H^0(C, \sce^0)}{n H^0(C, \sce^0)}} = 
	\frac{ \# H^0(C, \sce[n])}{ \# H^0(C, \sce^0[n])}
\end{align}
For this, we now set up the commutative diagram with vertical maps $\alpha, \beta,$ and $\gamma$ defined below, each given by multiplication by $n$:
\begin{equation}
	\label{equation:neron0-and-neron}
	\begin{tikzcd}
		0 \ar {r} & H^0(C, \sce^0) \ar {r} \ar {d}{\alpha} & H^0(C, \sce) \ar {r} \ar {d}{\beta} &  H^0(C, \sce)/H^0(C, \sce^0) \ar {r} \ar {d}{\gamma} & 0 \\
		0 \ar {r} & H^0(C, \sce^0) \ar {r} & H^0(C, \sce) \ar {r} & H^0(C, \sce)/H^0(C, \sce^0) \ar {r} & 0.
\end{tikzcd}\end{equation}
Note that $H^0(C, \sce)/H^0(C, \sce^0)$ is a finite group, being a subgroup of the finite group $H^0(C, \sce/\sce^0)$. Hence $\# \ker \gamma= \#\coker \gamma$.
Also, the kernels and cokernels of $\alpha$ and $\beta$ are finite as $H^0(C, \sce) = H^0(K(C), E)$ is a finitely generated abelian group.
From the snake lemma we have $\# \ker \alpha \cdot \# \ker \gamma\cdot \#\coker \beta= \# \ker \beta \cdot \# \coker \alpha\cdot \#\coker \gamma$
and hence
$ \# \ker \alpha\cdot \#\coker \beta=\# \ker \beta \cdot \# \coker \alpha$.
Rearranging yields $\frac{\# \coker \beta }{\# \coker \alpha} = \frac{\# \ker \beta}{\# \ker \alpha}$.
This is precisely \eqref{equation:neron-ratio} because $\ker \beta = H^0(C, \sce[n])$ 
%note that \ker \beta is H^0(C, \sce), which can be seen because it is identified with the kernel of E(K(C)) \xra{\times n} E(K(C)), but note there is not an exact sequence of the form \sce[n] \ra \sce \ra \sce; the last term should be n \cdot \sce, however the map H^0(n \cdot \sce) \ra H^0(\sce) is injective in this case because it corresponds to a map of certain K(C) points of E by the N\'eron mapping property
and $\ker \alpha= H^0(C, \sce^0[n])$.
\end{proof}

We now prove \autoref{lemma:pic-and-pic0-inequality}, whose proof
completes the proof of \autoref{proposition:uniform-bound-selmer-fibers}.
%One can give a more involved proof of \autoref{lemma:pic-and-pic0-inequality}
%using the Shioda-Tate formula for elliptic surfaces over finite fields
%(see, for example,
%\cite[Corollary 4.2.2]{ulmer:curves-and-jacobians-over-function-fields} for the statement of this formula).
%Although we do not explain it here, 
%one can also use the ideas in the proof of \autoref{lemma:pic-and-pic0-inequality}
%to give an alternate proof of the Shioda-Tate formula for elliptic surfaces over finite fields.

\begin{lemma}
	\label{lemma:pic-and-pic0-inequality}
With notation as in \autoref{notation:selmer-comparison},
	\begin{align}
		\label{equation:pic-and-pic0-inequality}
	\# \Sha(E)[n] \leq \# H^1(C, \sce^0)[n],
	\end{align}
	with equality holding if $\frac{H^0(C, \sce/\sce^0)}{H^0(C, \sce)}[n] = 0$.
\end{lemma}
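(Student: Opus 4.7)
The plan is to exploit the long exact sequence in \'etale cohomology on $C$ arising from the short exact sequence of sheaves
\[
0 \to \sce^0 \to \sce \to Q \to 0,
\]
where $Q := \sce/\sce^0$ is the skyscraper sheaf of component groups, supported at the finitely many places of bad reduction with stalk $\comp E v$ at each.

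The central step will be to identify $\Sha(E)$ with $\ker(H^1(C, \sce) \to H^1(C, Q))$, equivalently the image of $\phi \colon H^1(C, \sce^0) \to H^1(C, \sce)$. To establish this, I would apply the Leray spectral sequence to the generic-point inclusion $\iota \colon \spec K(C) \hookrightarrow C$ (using $\iota_* E = \sce$ by the N\'eron mapping property) to obtain an injection $H^1(C, \sce) \hookrightarrow H^1(K(C), E)$ whose image consists of classes unramified at every place. Lang's theorem over the finite residue fields $\kappa(v)$ kills unramified local cohomology at good-reduction places, while at bad-reduction places the unramified local cohomology is identified with $H^1(\kappa(v), \comp E v)$, matching $H^1(C, Q) = \bigoplus_v H^1(\kappa(v), \comp E v)$ compatibly with the long exact sequence. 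Hence the kernel of $H^1(C, \sce) \to H^1(C, Q)$ consists precisely of classes trivial at every completion, namely $\Sha(E)$.

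Granting this, the long exact sequence yields a short exact sequence
\[
0 \to \overline{M} \to H^1(C, \sce^0) \xrightarrow{\phi} \Sha(E) \to 0,
\]
where $\overline{M} := H^0(C, Q)/\mathrm{image}(H^0(C, \sce))$ is finite (as $Q$ is a torsion skyscraper). Applying the snake lemma to multiplication by $n$ yields
\[
0 \to \overline{M}[n] \to H^1(C, \sce^0)[n] \to \Sha(E)[n] \xrightarrow{\delta} \overline{M}/n\overline{M},
\]
so $\# H^1(C, \sce^0)[n] = \# \overline{M}[n] \cdot \# \ker \delta$. Since $\overline{M}$ is finite, $\# \overline{M}[n] = \# \overline{M}/n\overline{M}$, and rank--nullity gives $\# \ker \delta = \# \Sha(E)[n]/\# \mathrm{image}(\delta)$. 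Hence
\[
\# H^1(C, \sce^0)[n] = \# \Sha(E)[n] \cdot \tfrac{\# \overline{M}[n]}{\# \mathrm{image}(\delta)} \geq \# \Sha(E)[n],
\]
since $\# \mathrm{image}(\delta) \leq \# \overline{M}/n\overline{M} = \# \overline{M}[n]$. For the equality case, the hypothesis $\overline{M}[n] = 0$ forces $\overline{M}/n\overline{M} = 0$ by finiteness of $\overline{M}$, making both factors trivial and yielding $H^1(C, \sce^0)[n] \cong \Sha(E)[n]$.

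The main obstacle is the identification $\Sha(E) = \ker(H^1(C, \sce) \to H^1(C, Q))$, which requires careful comparison between the strict-henselian local cohomology (controlled by the Leray setup on $C$) and the completed local cohomology defining $\Sha$, with the component-group contributions at bad reduction handled via Lang's theorem on the identity components.
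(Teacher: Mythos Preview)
Your proposal is correct and follows essentially the same approach as the paper: both use the long exact sequence for $0 \to \sce^0 \to \sce \to \sce/\sce^0 \to 0$, identify $\Sha(E)$ with $\im\bigl(H^1(C,\sce^0)\to H^1(C,\sce)\bigr)$, obtain the short exact sequence with finite kernel $K = H^0(C,\sce/\sce^0)/\im H^0(C,\sce)$, and apply the snake lemma for multiplication by $n$ together with $\#K[n] = \#K/nK$. The only difference is that the paper invokes \cite[Proposition 4.5(b),(c),(d)]{cesnavicius:selmer-groups-as-flat-cohomology-groups} for the identification of $\Sha(E)$ with the image, whereas you sketch a direct argument via the Leray spectral sequence and Lang's theorem; you correctly flag the henselian-versus-complete comparison as the point requiring care, and this is precisely what the cited reference handles.
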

\begin{proof}
	Using \cite[Proposition 4.5(b), (c), and (d)]{cesnavicius:selmer-groups-as-flat-cohomology-groups}, we have the equality 
	$\Sha(E) = \im \left( H^1(C, \sce^0) \ra H^1(C, \sce) \right).$
	Therefore, by taking cohomology associated to 
	\begin{equation}
		\label{equation:}
		\begin{tikzcd}
			0 \ar {r} & \sce^0 \ar {r} & \sce \ar {r} & \sce/\sce^0 \ar {r} & 0,
	\end{tikzcd}\end{equation}
	we obtain an exact sequence
	\begin{equation}
		\label{equation:sha-cohomology-sequence}
		\begin{tikzcd}
			0 \ar {r} & \frac{H^0(C, \sce/\sce^0)}{H^0(C, \sce)} \ar {r} & H^1(C, \sce^0) \ar {r} & \Sha(E) \ar {r} & 0.
	\end{tikzcd}\end{equation}
	To simplify notation, let $K := \frac{H^0(C, \sce/\sce^0)}{H^0(C, \sce)}$.
	Note that $K$ is finite because $\sce/\sce^0$ is a finite group scheme.
	Sending \eqref{equation:sha-cohomology-sequence} to itself via multiplication by $n$ and applying the snake lemma, we obtain an exact sequence
	\begin{equation}
		\label{equation:k-snake-lemma}
		\begin{tikzcd}
			0 \ar {r} & K[n] \ar {r} & H^1(C, \sce^0)[n] \ar {r} & \Sha(E)[n] \ar {r} & K/nK.
	\end{tikzcd}\end{equation}
	Therefore, $\# K[n] \cdot \# \Sha(E)[n] \leq \# H^1(C, \sce^0)[n] \cdot \# K/nK$.
	However, since $K$ is a finite group, $\# K[n] = \# K/nK$, and hence
	$\# \Sha(E)[n] \leq \# H^1(C, \sce^0)[n]$.
	Since $\# K/nK = \#K[n]$, \eqref{equation:k-snake-lemma} yields equality in \eqref{equation:pic-and-pic0-inequality} if $K[n] = 0$.
\end{proof}

To conclude, we give an example where equality in \autoref{proposition:uniform-bound-selmer-fibers} is achieved, but $H^0(C, \sce[n]) \neq 0$.
\begin{example}
	\label{example:selmer-inequality-achieved}
	With notation as in \autoref{notation:selmer-comparison},
	it is indeed possible that $\sel_n(E) > H^1(C, \sce^0[n])$ in the statement of \autoref{proposition:uniform-bound-selmer-fibers}
	when $E$ has nontrivial torsion points defined over $K(C)$.
	We produce an example in the case $C = \bp^1_{\mathbb F_7}$.
	Observe that by \cite[Theorem 5.1]{coxP:torsion-in-elliptic-curves},
	in order for $H^0(\bp^1_k, \sce[n]) \neq 0$, $n$ must have a prime factor $\leq 7$.
	We claim 
	$\sel_3(E) > H^1(C, \sce^0[3])$
	for $E$ the elliptic curve over $\mathbb F_7(t)$ defined by $y^2z + txyz + (t^3 + 3)yz^2 = x^3$.
	The \texttt{Magma} \cite{Magma} code
\begin{verbatimtab}
	F<t> :=FunctionField(GF(7));
	E := EllipticCurve([t,0,t^3+3,0,0]); 
	MordellWeilGroup(E);
	LocalInformation(E);
\end{verbatimtab}
%it's not particularly relevant, but the Birch Swinnerton-Dyer Conjecture is known for height $1$ elliptic curves over $\mathbb F_7(t)$ \cite{milne:brauer-group-of-a-rational-surface},
	verifies $H^0(\bp^1_{\mathbb F_7}, \sce) = E(\mathbb F_7(t)) = \bz/3\bz$, $\sce/\sce^0 \simeq \bz/3\bz$, and further $E$ has two places
	of bad reduction: one of type $\mathrm{I}_1$ and the other of type $\mathrm{I}_3$.
	By \cite[Lemma 5.15]{de-jong:counting-elliptic-surfaces-over-finite-fields}, we have $H^0(\bp^1_{\mathbb F_7}, \sce^0[3]) = 0$
	and so the nontrivial $3$ torsion points necessarily meet the two non-identity
	components of $\sce$ in the fiber of type $\mathrm{I}_3$ reduction. Hence,
	the map $H^0(\bp^1_{\mathbb F_7}, \sce) \ra H^0(\bp^1_{\mathbb F_7}, \sce/\sce^0)$
	is surjective. Therefore, the inequality of \autoref{lemma:pic-and-pic0-inequality} is an equality.
	Combining this with $\#H^0(\bp^1_{\mathbb F_7}, \sce^0) = 1, \#H^0(\bp^1_{\mathbb F_7}, \sce) =3,$ and \autoref{lemma:selmer-ratio-equality}, we find
	$\# \sel_3(E) = \# H^0(\bp^1_{\mathbb F_7}, \sce[3]) \cdot \# H^1(C, \sce^0[3]) = 3 \cdot \# H^1(C, \sce^0[3]).$
\end{example}

\section{The monodromy of the Selmer space}
\label{section:2-3-positive-characteristic}

In this section, for $k$ a field with $\chr(k) \nmid 2n$, we constrain
$\mgeom n d k$, as defined in \autoref{definition:monodromy-representation}, in \autoref{theorem:n-monodromy-image}. 
The resulting corollary, \autoref{corollary:number-components}, 
is the only result of this section which will be used to prove \autoref{theorem:number-components} in \autoref{section:finishing-proof}.

The outline of this section is as follows.
In \autoref{subsection:setup-form} we define the monodromy representation associated to the Selmer space
and state the main result of this section, \autoref{theorem:n-monodromy-image}.
In \autoref{subsection:boundary-divisors} we analyze the divisor $\divsing d {\bz[1/2]}$ in a compactification of $\smespace d {\bz[1/2]}$
and prove this divisor is smooth over an open set meeting all fibers over $\spec \bz[1/2]$.
In \autoref{subsection:proof-monodromy-tame}
we compare the monodromy in characteristic $0$ and characteristic $p$.
Finally, in \autoref{subsection:monodromy-image} we prove \autoref{theorem:n-monodromy-image} and use this
to compute the number of components of the $n$-Selmer space.

\subsection{The monodromy representation}
\label{subsection:setup-form}

In \autoref{definition:selmer-space}, we constructed an algebraic space $\sspace n d B$
which is quasifinite over the parameter space for Weierstrass equations $\espace d B$.
By \autoref{corollary:finite-etale-smooth-locus}, we know that $\sspace n d B$ represents a locally constant sheaf 
of rank 
$12d-4$ free $\bz/n\bz$ modules over the open $\smespace d B \subset \espace d B$ defined in \autoref{definition:smooth-espace}.
We now introduce notation for the free module on which $\pi_1(\smespace d B)$ acts and then define the associated monodromy representation.

\begin{definition}
	\label{definition:vsel}
	Let $E$ be an elliptic curve over $k(t)$, for $k$ a field with $\chr(k) \nmid n$. 
Let $j: \spec k(t) \hookrightarrow \bp^1_k$ denote the 
inclusion of the generic point.
Let $\vs E n := H^1(\bp^1_{\ol k}, j_* E_{\ol k}[n])$.
If $B$ is an integral base scheme with $2n$ invertible on $B$, $\eta$
denotes the generic point of $\bp^1_B \times_B \espace d B$, and $f: \uespace d B \ra \bp^1_B \times_B \espace d k$ is the natural map, define
$\vsel n d B := \vs {f^{-1}(\eta)} n.$
\end{definition}

Using this, we can define the monodromy representation associated to the Selmer space.

\begin{definition}
	\label{definition:monodromy-representation}
	For $d > 0$,
	and $B$ an integral noetherian scheme,
	the locally constant rank $12d-4$ sheaf of free $\bz/n\bz$ modules $\smssheaf n d B$
	induces the monodromy representation (or Galois representation)
\begin{align}
	\label{equation:selmer-galois-rep}
	\mono n d B : \pi_1(\smespace d B)
	\ra \gl(\vsel n d k) \simeq \gl_{12d-4}(\bz/n\bz).
\end{align}
For an integral noetherian scheme $B$ with geometric generic point $\ol \eta$, we call $\im \mono n d B \subset \gl_{12d-4}(\bz/n\bz)$ the {\em monodromy} of the $n$-Selmer space of height $d$ over $B$
and $\im \mono n d {\ol \eta} \subset \gl_{12d-4}(\bz/n\bz)$ the {\em geometric monodromy} of the $n$-Selmer space of height $d$ over $B$.
\end{definition}

\begin{remark}
	\label{remark:}
	Technically speaking, we should keep track of base points in our
	fundamental groups.
	However, as we will ultimately be concerned with integral base schemes $B$,
	changing basepoint only changes the map $\mono n d k$ by conjugation.
	Since
	we will only care about the image of $\mono n d k$,
	we will omit the basepoint from our notation.
\end{remark}

Recall from \autoref{subsubsection:group-notation} that for $(V, Q)$ a quadratic space over $\bz$, we have
$\osp(Q) \subset \o(Q)$ defined
as the kernel of the $-1$-spinor norm.
The main result of this section is the following, which is proven in \autoref{subsection:monodromy-image}.

\begin{theorem}
	\label{theorem:n-monodromy-image}
	Suppose that $k$ is a field of characteristic prime to $2n$.
	For $d \geq 2$, 
	there is a non-degenerate quadratic space $(\vsel \bz d k, \qsel \bz d k)$ over
	$\bz$ whose reduction $\bmod n$ is $(\vsel n d k, \qsel n d k) :=
	(\vsel \bz d k \otimes_\bz \bz/n\bz, \qsel \bz d k \otimes_\bz \bz/n\bz)$,
such that the following holds.
	Let $r_n : \o(\qsel \bz d k) \ra \o(\qsel n d k)$ denote the induced 
reduction 
	$\bmod n$
	map.
	Then, 
	the images of the monodromy representation
	$\mono n d {k}: \pi_1(\smespace d {k}) \ra \gl(\vsel n d k)$
	and geometric monodromy representation 
	$\mono n d {\ol k}: \pi_1(\smespace d {\ol k}) \ra \gl(\vsel n d k)$
	of \autoref{definition:monodromy-representation}
	satisfy $r_n(\osp(\qsel \bz d \bz)) \subset \im \mono n d {\ol k} \subset  \im \mono n d {k} \subset \o(\qsel n d {\ol k})$.
\end{theorem}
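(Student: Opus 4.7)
The plan is to construct the quadratic form $\qsel \bz d k$ from the cup product on $H^2(\bp^1, (j_*E[n])^{\otimes 2}) \to H^2(\bp^1, \mu_n) \simeq \bz/n\bz$, where the second map comes from the Weil pairing identifying $j_*E[n]$ with its twisted dual. This endows $\vsel n d k$ with a symmetric bilinear form, and it lifts naturally to an integral form $\qsel \bz d k$ on an integral lattice (for instance, via $\ell$-adic cohomology across all primes, or by importing the transcendental lattice from $\bc$). Because the pairing arises functorially in the sheaf $\ssheaf n d B$, the monodromy action preserves it, so the outer containment $\im \mono n d k \subset \o(\qsel n d k)$ is automatic. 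The middle containment $\im \mono n d {\ol k} \subset \im \mono n d k$ is immediate from the inclusion $\pi_1^{\et}(\smespace d {\ol k}) \hookrightarrow \pi_1^{\et}(\smespace d k)$.

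The substance of the theorem is therefore the lower bound $r_n(\osp(\qsel \bz d \bz)) \subset \im \mono n d {\ol k}$. Over $k = \bc$, this is exactly \cite[Theorem 4.10]{de-jongF:on-the-geometry-of-principal-homogeneous-spaces}, which presents $\qsel \bz d \bc$ as an integral form of the Pontrjagin square and shows that the image of the topological monodromy contains the kernel of the $-1$-spinor norm. Comparison between étale and topological cohomology, together with the identification of $\pi_1^{\et}(\smespace d \bc)$ with the profinite completion of $\pi_1^{\top}(\smespace d \bc)$, transfers this statement to the étale geometric monodromy over $\bc$; since the monodromy factors through the finite group $\gl(\vsel n d k)$, the passage to profinite completion is harmless.

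The remaining task, which occupies the bulk of \autoref{section:2-3-positive-characteristic}, is to transfer the lower bound from characteristic zero to arbitrary characteristic $p \nmid 2n$. The strategy is to work relatively over $S := \spec \bz[1/2]$ and compare geometric monodromy across the fibers of $S$ using Grothendieck's specialization theorem for the tame fundamental group. More precisely, I will exhibit a smooth compactification of $\smespace d S$ over $S$ whose boundary divisor $D$ is tame enough that the finite étale cover $\smsspace n d S \to \smespace d S$ extends to a tamely ramified cover along $D$ on every geometric fiber of $S$. Granted tameness, a relative version of Abhyankar's lemma (see \cite[Expos\'e XIII]{noopsortSGA1Grothendieck1971}) produces a surjection $\pi_1^{\tame}(\smespace d \bc) \twoheadrightarrow \pi_1^{\tame}(\smespace d {\ol{\mathbb F_p}})$ compatible with the two monodromy representations, yielding the equality $\im \mono n d \bc = \im \mono n d {\ol{\mathbb F_p}}$ and reducing the claim over $\ol{\mathbb F_p}$ to the one over $\bc$.

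The main obstacle is verifying tameness of the boundary. This reduces, via the Jacobian criterion, to showing that the relevant components of $D$ are geometrically integral and generically smooth over a dense open of $\spec \bz[1/2]$, so that Abhyankar's criterion applies uniformly in the fiber. The decisive component is $\divii d {\bz[1/2]}$, which parameterizes Weierstrass surfaces with a fiber of type $\mathrm{I}_2$; its smoothness is established by an explicit Jacobian calculation after normalizing the node of the universal Weierstrass equation. The remaining components of $D$—parameterizing non-minimal Weierstrass models and more degenerate additive reduction types—are then controlled by dimension arguments of the flavor used in the proof of \autoref{lemma:reduced-discriminant}, and do not present further difficulty.
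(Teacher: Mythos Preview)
Your outline tracks the paper's proof closely: the quadratic form comes from cup product and Poincar\'e duality, the inclusions $\im \mono n d {\ol k} \subset \im \mono n d k \subset \o(\qsel n d k)$ are formal, the lower bound over $\bc$ is exactly \cite[Theorem 4.10]{de-jongF:on-the-geometry-of-principal-homogeneous-spaces}, and the transfer to characteristic $p$ goes through tameness plus specialization of the tame fundamental group, with the Jacobian computation for $\divii d {\bz[1/2]}$ as the technical heart.

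There is one genuine omission. You propose to apply Abhyankar's lemma and the specialization surjection $\pi_1^{\tame}(\smespace d \bc) \twoheadrightarrow \pi_1^{\tame}(\smespace d {\ol{\mathbb F_p}})$ directly to the $(12d+3)$-dimensional family. The specialization theorem you cite from \cite[Expos\'e XIII]{noopsortSGA1Grothendieck1971} (and its refinement in \cite{orgogozo2000theoreme}) is stated for \emph{relative curves} with an \'etale boundary divisor over $S$. To make it apply, the paper first cuts $\smespace d S$ down to a general line $L \simeq \bp^1_S$ via the Lefschetz hyperplane theorem \cite[Part II, Theorem 1.2]{goresky1988stratified}, obtaining a surjection $\pi_1^{\et}(L_{\ol\bq}\cap\smespace d {\ol\bq}) \twoheadrightarrow \pi_1^{\et}(\smespace d {\ol\bq})$; only then does one have a relative curve whose boundary $D = L \setminus (L\cap\smespace d S)$ is \'etale over $S$ (by Bertini, using that the two boundary divisors are generically smooth on each fiber). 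Without this reduction, one would need the full boundary of $\smespace d S$ in $\bp^{12d+3}_S$ to be a relative normal crossings divisor, and $\divii d S$ is only \emph{generically} smooth on each fiber.

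Two minor corrections. First, the Selmer space requires $2n$ invertible, so you should work over $\spec \bz[1/2n]$ (or its localization at $p$), not $\spec \bz[1/2]$. Second, the boundary of $\smespace d {\bz[1/2]}$ in $\bp^{12d+3}$ has exactly two irreducible components, $\divii d {\bz[1/2]}$ and the hyperplane at infinity $\bp^{12d+3}\setminus\aff d {}$; the non-minimal locus and the additive-reduction locus are not separate divisorial components (the former lies inside $\divii d {}$ and the latter has codimension $\geq 2$), so your ``remaining components'' paragraph should be replaced accordingly.
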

\begin{remark}
	\label{remark:quadratic-form-determination}
	The quadratic form $\qsel n d k$ appearing in the statement of \autoref{theorem:n-monodromy-image}
	is explicitly determined in
	\cite[p. 786]{de-jongF:on-the-geometry-of-principal-homogeneous-spaces}.
	That is, $\qsel n d k$ is the reduction $\bmod n$ of the quadratic form $\qsel {\bz} d k$ associated to $U^{\oplus (2d-2)} \oplus (-E_8)^{\oplus d}$, for $U$ a hyperbolic plane and $-E_8$ the
	$E_8$ lattice with the negative of its usual pairing.
%	Since the $E_8$ lattice is even, and the quadratic form
%	$\qsel n d k$ is induced by the cup product on $H^2(X, \mu_n)$,
%	for $X$ the generic elliptic surface of height $d$,
%	it follows that the cup product is even.
%
%	The construction of the above quadratic form is complex analytic.
%	When $2 \nmid n$, this quadratic form also comes from Poincare duality 
%	$\vsel n d k \times \vsel n d k \ra \bz/n\bz$, as explained in
%	\cite[p. 8, lines 26-33]{zywina:inverse-orthogonal}.
%	It would be interesting to know if there were an algebraic reason for the existence of this quadratic form when $2 \mid n$.
\end{remark}
%\begin{remark}
%	\label{remark:}
%	The value of the quadratic form $\qsel n d k$ on a particular class
%	$\alpha \in H^1(\bp^1_k , \sce[n])$
%	can be described geometrically. 
%	By 
%	\cite[Corollary 4.3]{de-jongF:on-the-geometry-of-principal-homogeneous-spaces},	
%	one may viewing $\alpha$ as a relative
%	genus $1$ curve $Y$ over $\bp^1$ with a degree $n$ divisor $D \subset Y$, the value $\qsel n d k(\alpha)$ is the determined by the self intersection
%	of $D$ in $Y$, see
%	\cite[Lemma 4.8]{de-jongF:on-the-geometry-of-principal-homogeneous-spaces}.
%	This is proved 
%	in \cite[Lemma 4.8]{de-jongF:on-the-geometry-of-principal-homogeneous-spaces}
%	over $\bc$, but holds more generally, at least over
%	fields of characteristic not $2$.
%\end{remark}

We next sketch the idea for proving \autoref{theorem:n-monodromy-image}, whose proof will occupy much of the remainder of the section.

\subsubsection*{Idea of proof of \autoref{theorem:n-monodromy-image}}
First, in the case $k = \bc$, \autoref{theorem:n-monodromy-image} follows from \cite[Theorem 4.10]{de-jongF:on-the-geometry-of-principal-homogeneous-spaces}.
Therefore, the content of \autoref{theorem:n-monodromy-image} is to show it holds over fields $k$ of positive characteristic not equal to $2$.
To transfer the monodromy to positive characteristic, the key issue is showing that $\mono n d {\ol k}$ factors through the tame fundamental group,
meaning that a compactification of the corresponding cover has no ramification orders dividing $\chr(k)$.
We have shown in \autoref{lemma:2-divisor-complement} that there are two divisors in the boundary of $\smespace d {\bz[1/2]}$ 
whose smooth locus over $\bz[1/2]$ is dense in all fibers over $\spec \bz[1/2]$.
By applying the Lefschetz hyperplane theorem, we can replace $\smespace d {\bz[1/2]}$ by an open subscheme $U \subset \bp^1$, and use the above mentioned smoothness
to conclude that the intersection of $L$ with these boundary divisors is \'etale over $\spec \bz[1/2]$.
A version of Abhyankar's lemma then implies that
$\mono n d {\ol k}$ factors through the tame fundamental group, implying that 
$\im \mono n d {\ol k}$ agrees with $\im \mono n d {\bc}$.

\subsection{Analyzing the boundary divisors}
\label{subsection:boundary-divisors}

In this section we introduce a compactification of $\smespace d B$ and determine the divisors in the complement of $\smespace d B$.
By showing these divisors are generically smooth in fibers over $B$, we will be able to conclude
that the generic ramification orders of the Selmer space over these divisors are tame, which will allow us to compare
the monodromy associated to the Selmer space in characteristic $0$ and characteristic $p$.

The main divisor of interest is $\divsing d B$, introduced in \autoref{definition:divii}. 
We next show this is a relative effective Cartier divisor. 
To do so, we will want to understand its fibers over $B$.
This will use the following lemma, concretely connecting it
to $\mathcal C^d_B$ of \autoref{definition:divii} in certain cases.
\begin{lemma}
\label{lemma:divii-equals-c}
	For $\spec k$ a point of $\spec \bz[1/2]$, 
	we have
	$\divsing d k = \mathcal C^d_k$.
\end{lemma}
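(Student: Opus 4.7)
The claim is that the formation of the scheme-theoretic image $\mathcal C^d$ of $\pi_1 \colon \Psi^d \to \aff d {}$ commutes with the base change $\spec k \to \spec \bz[1/2]$. Since scheme-theoretic image commutes with flat base change, and $\spec k \to \spec \bz[1/2]$ is flat whenever $\chr(k) = 0$, the case $\chr(k) = 0$ is immediate (factor through $\spec \bq$). The substance lies in the case $\chr(k) = p > 2$.

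My plan is to show that both $\mathcal C^d_k$ and $\divii d k$ are integral closed subschemes of $\aff d k$ sharing the same underlying topological space; since an integral closed subscheme is determined by its point set, equality then follows. The topological coincidence stems from $\pi_1^B \colon \Psi^d_B \to \aff d B$ being proper (since $\Psi^d_B$ is closed in $\aff d B \times_B X_B$ and $X_B$ is projective over $B$); the set-theoretic image of a proper morphism is closed and commutes with arbitrary base change, so the underlying sets of $\mathcal C^d_k$ and $\divii d k$ both equal the set-theoretic image of $\pi_1^k$.

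Integrality of $\mathcal C^d_k$ follows from Lemma~\ref{lemma:pi-2-smooth}: $\Psi^d_k$ is smooth over $k$ with geometrically integral fibers over $Y_k$, which is itself integral (an $\mathbb A^1$-bundle over $\bp^1_k$), so $\Psi^d_k$ is integral, and the scheme-theoretic image of an integral scheme is integral. Integrality of $\divii d k$ amounts to showing $\mathcal C^d_{\bz[1/2]}$ is $\bz[1/2]$-flat with geometrically integral fibers. The cleanest route I see is to realize $\mathcal C^d_{\bz[1/2]}$ as the vanishing locus $V(\Delta) \subset \aff d {\bz[1/2]}$ of a single universal polynomial $\Delta \in \bz[1/2][a_{2i,j}]$ encoding that the Weierstrass equation has some singular point (equivalently, that $\disc(E)(s,t)$ has a repeated root on $\bp^1$). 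Flatness of $V(\Delta)$ over $\bz[1/2]$ is then automatic, and irreducibility of $V(\Delta_k)$ reduces to birationality of $\Psi^d_k \to V(\Delta_k)$, which holds because a generic singular Weierstrass model has a unique singular point, realizing Kodaira type $\mathrm{I}_2$ at a single fiber of $\bp^1_k$.

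The principal obstacle is this concrete construction and irreducibility analysis of $\Delta$ uniformly over $\bz[1/2]$, especially in residue characteristic $p$. A more abstract alternative bypasses $\Delta$ entirely: apply Stein factorization to the proper morphism $\pi_1^{\bz[1/2]}$ and use the smoothness of $\Psi^d_{\bz[1/2]}$ to argue via cohomology-and-base-change that $(\pi_1)_*\sco_{\Psi^d_{\bz[1/2]}}$ is $\bz[1/2]$-flat and its formation commutes with arbitrary base change, which directly yields the desired compatibility of scheme-theoretic images.
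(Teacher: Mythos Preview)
Your overall strategy coincides with the paper's: use properness of $\pi_1$ to match underlying topological spaces, and use smoothness of $\Psi^d_B$ over $B$ (Lemma~\ref{lemma:pi-2-smooth}) to see that $\mathcal C^d_k$ is the reduced induced structure on the set-theoretic image of $\pi_1^k$. The paper's proof then simply asserts that this equals the base change $\divii d k$ without further justification; you are right to isolate reducedness of $\divii d k$ as the remaining point, and in that sense your outline is actually more careful than the paper's own write-up.

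Two remarks on your proposed completions. First, the parenthetical ``equivalently, that $\disc(E)(s,t)$ has a repeated root on $\bp^1$'' is incorrect: Kodaira type $\mathrm{II}$ fibers give a double zero of the discriminant while the Weierstrass total space remains smooth (cf.\ the proof of Lemma~\ref{lemma:reduced-discriminant}), so the repeated-discriminant locus strictly contains $\divii d k$. This does not affect the existence of a single defining polynomial $\Delta$, but your description of it is off. Second, and more substantively, birationality of $\pi_1^k \colon \Psi^d_k \to \mathcal C^d_k$ gives integrality of $\mathcal C^d_k$ but does not by itself force $V(\Delta_k)$ to be reduced: one must still rule out $\Delta_k$ being a proper power of an irreducible. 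One way to close this is to observe that birationality over $\mathbb F_p$ together with smoothness of $\Psi^d_{\bz[1/2]}$ over $\bz[1/2]$ makes the scheme-theoretic fiber of $\pi_1$ over the generic point $\eta_p$ of $\mathcal C^d_{\mathbb F_p}$ equal to $\spec \kappa(\eta_p)$; Nakayama then forces $\sco_{\mathcal C^d_{\bz[1/2]},\eta_p} \cong \sco_{\Psi^d_{\bz[1/2]},\xi_p}$, which is regular, so $\divii d {\mathbb F_p}$ is generically reduced and hence (being a hypersurface) reduced. Your Stein-factorization alternative runs into the same obstacle: since $\pi_1$ is finite, $(\pi_1)_*\sco$ does commute with arbitrary base change, but passing from that to compatibility of $\ker\bigl(\sco \to (\pi_1)_*\sco\bigr)$ with base change is exactly the reducedness question in disguise.
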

\begin{proof}
By construction, $\divsing d k$ is the base change of $\divsing d {\bz[1/2]}$ so we only need to show $\mathcal C^d_k$ is the base change of
$\mathcal C^d_{\bz[1/2]}$.
The map $\pi_1$ of \autoref{definition:divii} is proper and $\mathcal C^d_B$ is reduced whenever $B$ is reduced 
	by \autoref{lemma:pi-2-smooth}.
	This implies that $\mathcal C^d_k$, the scheme theoretic image of $\Psi^d_k$ under $\pi_1$,
	is simply the reduced subscheme of $\aff d k$ whose underlying set is given by the set-theoretic image of the map $\pi_1$ from \eqref{equation:incidence-divisor}.
	So indeed, $\mathcal C^d_k$ is the base change of $\mathcal C^d_{\bz[1/2]}$ to $\spec k$, and hence agrees with $\divsing d k$.
\end{proof}

We can now use the above lemmas to show $\divsing d B$ is a relative effective Cartier divisor.
\begin{proposition}
	\label{lemma:divii-geom-integral}
	For $B$ a scheme with $2$ invertible on $B$, the subscheme 
	$\divsing d B \subset \aff d B$
	as in \autoref{definition:divii}
	is 
	a relative effective Cartier divisor over $B$.
	In the case that $B = \spec k$ for $k$ a field,
	$\divsing d B$ is geometrically integral.
\end{proposition}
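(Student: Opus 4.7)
The plan is to treat the two assertions separately. For geometric integrality when $B = \spec k$, I would first show that $\Psi^d_k$ is geometrically integral. By \autoref{lemma:pi-2-smooth}, the map $\pi_2 \colon \Psi^d_k \to Y_k$ is smooth with geometrically integral fibers, and $Y_k$ is an affine-line bundle over $\bp^1_k$ (the locus $y = 0$, $z \neq 0$ in each $\bp^2$-fiber of $X_k \to \bp^1_k$), hence geometrically integral. Composing, $\Psi^d_k$ is smooth over $k$ with geometrically integral geometric fibers, so is itself geometrically integral. Base changing to $\bar k$, $\Psi^d_{\bar k}$ is integral, so its scheme-theoretic image $\mathcal C^d_{\bar k}$ under the proper morphism $\pi_1$ is integral; by \autoref{lemma:divii-equals-c} applied over $\bar k$, this image equals $\divii d {\bar k}$, which coincides with the base change of $\divii d k$ to $\bar k$ by construction. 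Therefore $\divii d k$ is geometrically integral.

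For the relative effective Cartier divisor claim, I would reduce to $B = \spec \bz[1/2]$, since effective Cartier divisors flat over the base pull back to relative effective Cartier divisors under arbitrary base change. Running the same argument with $\bz[1/2]$ in place of $k$, $\Psi^d_{\bz[1/2]}$ is integral (smooth with geometrically integral fibers over an integral base), so $\divii d {\bz[1/2]} = \mathcal C^d_{\bz[1/2]}$ is an integral closed subscheme of $\aff d {\bz[1/2]}$. A fiberwise dimension count gives $\dim \divii d k \leq \dim \Psi^d_k = 12d + 2$ for each residue field $k$ of $\bz[1/2]$; to obtain equality, I would invoke Tate's algorithm, which guarantees that a generic singular minimal Weierstrass model has a single place of $\mathrm{I}_2$ reduction, contributing exactly one singular point to the total space of the Weierstrass model. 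This shows $\pi_1$ is generically finite onto its image, so each fiber of $\divii d {\bz[1/2]} \to \spec \bz[1/2]$ has pure codimension one in $\aff d k$.

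Since $\aff d {\bz[1/2]}$ is regular and $\divii d {\bz[1/2]}$ is an integral closed subscheme of pure codimension one, it is locally principal, hence an effective Cartier divisor in $\aff d {\bz[1/2]}$. Being a hypersurface in the Cohen--Macaulay scheme $\aff d {\bz[1/2]}$, it is itself Cohen--Macaulay; together with the equidimensionality of its fibers over the regular base $\spec \bz[1/2]$, miracle flatness yields flatness over $\bz[1/2]$. Thus $\divii d {\bz[1/2]}$ is a relative effective Cartier divisor over $\spec \bz[1/2]$, and base change finishes the case of general $B$. The main obstacle I anticipate is the codimension calculation---i.e., the generic finiteness of $\pi_1$---which I plan to address via Tate's algorithm; an alternative would be to exhibit the defining equation of $\divii d {\bz[1/2]}$ explicitly as the discriminant of the binary form $\disc(E)(s,t)$ in the coefficients $a_{2i,j}$ and verify this polynomial is nonzero modulo every prime $p \neq 2$.
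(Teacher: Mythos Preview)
Your argument is correct and follows the same route as the paper's: reduce to $B = \spec \bz[1/2]$, use \autoref{lemma:pi-2-smooth} to see $\Psi^d$ is integral so its proper image is integral, compute the fiber dimension to be $12d+2$ via generic finiteness of $\pi_1$, and conclude Cartier from regularity of $\aff d {\bz[1/2]}$; the paper then deduces flatness from the criterion in \cite[\S8.2, Lemma 6]{BoschLR:Neron} (Cartier in the total space and in each fiber) rather than via miracle flatness, but the two are equivalent here. One caution on your proposed alternative: the discriminant of the binary form $\disc(E)(s,t)$ also vanishes on Weierstrass models with a fiber of type $\mathrm{II}$, which have smooth total space and therefore do not lie in $\divii d B$, so that polynomial does not cut out $\divii d B$.
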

\begin{proof}
	Since the property of being a relative effective Cartier divisor is preserved under base change, to verify the first statement, it suffices to verify
	the universal case that $B = \spec \bz[1/2]$.
	By the equivalence of various notions of effective relative Cartier divisor \cite[\S8.2, Lemma 6]{BoschLR:Neron},
	it suffices to check $\divsing d B$ is Cartier and is Cartier in each fiber. 
	Since $\aff d {\bz[1/2]}$ is regular, the notions of Cartier divisor and Weil divisor coincide,
	and so it suffices to check that
	$\divsing d {\bz[1/2]}$ is a Weil divisor which is geometrically integral on each fiber over $\bz[1/2]$.

	Let $R$ be either $\bz[1/2]$ or any residue field of $\bz[1/2]$ over a prime ideal of $\bz[1/2]$.
	To check $\divsing d R$ is a Weil divisor in $\aff d R$, we note that by 
\autoref{lemma:divii-equals-c}
	$\divsing d R =  \mathcal C^d_R$, and so we wish to check $\mathcal C^d_R$ is a Weil divisor.
	Since the map $\pi_1$ is quasi-finite, $\dim \mathcal C^d_R = \dim \Psi^d_R + \dim Y_R = 12d + 2 + \dim R$.
	Since schemes of finite type over $\spec \bz$ are catenary, we obtain that $\divsing d R$ is indeed a divisor in $\aff d R$.
	Finally, in the case $R$ is one of the residue fields of $\bz[1/2]$, $\divsing d R$ is geometrically integral because it is the image of the
	scheme $\Psi^d_R$, which is geometrically integral by \autoref{lemma:pi-2-smooth}.
\end{proof}

Using \autoref{lemma:divii-geom-integral}, we verify that $\divsing d {\bz[1/2]}$ has smooth locus meeting all fibers over $\spec \bz[1/2]$.
For the statement of the next corollary,
recall that $\smespace d {\bz[1/2]}$ was constructed as a fiberwise dense open subscheme of an affine space $\aff d {\bz[1/2]}$,
and so it embeds as a fiberwise dense open in $\bp^{12d+3}_{\bz[1/2]}$ via the composition 
$\smespace d {\bz[1/2]} \subset \aff d {\bz[1/2]} \subset \bp^{12d+3}_{\bz[1/2]}$.
	
\begin{corollary}
	\label{lemma:2-divisor-complement}
The complement $\bp^{12d+3}_{\bz[1/2]} - \smespace d {\bz[1/2]}$ has two irreducible components, $\divsing d {\bz[1/2]}$ and $\bp^{12d+3}_{\bz[1/2]} - \aff d {\bz[1/2]}$,
	both of which are relative effective Cartier divisors and remain irreducible over each point of $\spec \bz[1/2]$.
	Further, $\bp^{12d+3}_{\bz[1/2]} - \aff d {\bz[1/2]}$ is smooth over $\spec {\bz[1/2]}$ and
	there is a dense open subscheme $U \subset \divsing d {\bz[1/2]}$
	meeting each fiber over $\spec {\bz[1/2]}$ nontrivially
	such that $U$ is smooth 
	over ${\bz[1/2]}$.
\end{corollary}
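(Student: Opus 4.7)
The plan is to first identify the codimension-one components of the complement, then verify the required structural properties, and finally carve out the desired smooth open.

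Begin with the set-theoretic decomposition
\[
	\bp^{12d+3}_{\bz[1/2]} \setminus \smespace d {\bz[1/2]} \;=\; H \,\cup\, \divii d {\bz[1/2]} \,\cup\, \mathcal N,
\]
where $H := \bp^{12d+3}_{\bz[1/2]} \setminus \aff d {\bz[1/2]}$ is the coordinate hyperplane at infinity and $\mathcal N := \aff d {\bz[1/2]} \setminus \espace d {\bz[1/2]}$ is the non-minimal locus. A parameter count fiber-by-fiber shows $\mathcal N$ has codimension at least $11$ in $\aff d$: non-minimality at a fixed $p \in \bp^1$ forces $a_{2i}$ to vanish to order at least $2i$ at $p$ for $i \in \{1,2,3\}$, giving $2 + 4 + 6 = 12$ independent linear conditions on the coefficients, while allowing $p$ to vary over $\bp^1$ recovers just one dimension. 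Thus $\mathcal N$ contributes no codimension-one component, and the two codimension-one components must be exactly $H$ and $\divii d {\bz[1/2]}$.

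The component $H$ is a coordinate hyperplane in $\bp^{12d+3}_{\bz[1/2]}$, hence smooth over $\spec \bz[1/2]$, a relative effective Cartier divisor, and irreducible on each fiber. The corresponding properties of $\divii d {\bz[1/2]}$ (relative effective Cartier divisor with geometrically integral fibers) are exactly the content of \autoref{lemma:divii-geom-integral}.

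For the smooth open $U$, I would define $U \subset \divii d {\bz[1/2]}$ to be the open locus where the structure morphism $\divii d {\bz[1/2]} \to \spec \bz[1/2]$ is smooth. Since $\divii d {\bz[1/2]} \to \spec \bz[1/2]$ is flat (being a relative effective Cartier divisor in an affine space over $\spec \bz[1/2]$), the fiberwise criterion for smoothness of flat morphisms identifies $U \cap \divii d_{\kappa(s)}$ with the smooth locus of $\divii d_{\kappa(s)}$ over $\kappa(s)$ for each $s \in \spec \bz[1/2]$. By \autoref{lemma:divii-geom-integral} each fiber $\divii d_{\kappa(s)}$ is geometrically integral, in particular geometrically reduced, and so has a dense open smooth locus over $\kappa(s)$; thus $U$ meets every fiber in a dense open. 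The step I expect to be least routine is the parameter count bounding $\mathcal N$; the rest assembles directly from \autoref{lemma:divii-geom-integral} together with the openness of the smooth locus and the fiberwise smoothness criterion for flat morphisms.
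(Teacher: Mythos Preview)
Your proposal is correct and follows essentially the same route as the paper: both lean on \autoref{lemma:divii-geom-integral} for the Cartier-divisor and geometric-integrality properties of $\divii d {\bz[1/2]}$, and both build $U$ as the smooth locus of the flat map $\divii d {\bz[1/2]} \to \spec \bz[1/2]$, invoking fiberwise geometric reducedness to see it is fiberwise dense.

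One small point: your parameter count shows only that $\mathcal N$ contributes no \emph{codimension-one} component, whereas the corollary asserts exactly two irreducible components of any codimension. The cleanest fix---and what the paper is implicitly using when it says $\aff d {\bz[1/2]} - \smespace d {\bz[1/2]}$ has $\divii d {\bz[1/2]}$ as its \emph{only} component---is to observe that in fact $\mathcal N \subset \divii d {\bz[1/2]}$: if $f^{2i} \mid a_{2i}$ for all $i$ then the Weierstrass surface is singular at the point $[x,y,z]=[0,0,1]$ over any root of $f$ (all partials of the defining equation vanish there). With that containment your parameter count becomes unnecessary, and the decomposition collapses to $H \cup \divii d {\bz[1/2]}$ on the nose.
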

\begin{proof}
First, $\bp^{12d+3}_{\bz[1/2]} - \aff d {\bz[1/2]}$ is simply a projective space of dimension $12d+2$ over $\spec {\bz[1/2]}$, and hence certainly a
smooth relative effective Cartier divisor.

	Therefore, it suffices to show that $\aff d {\bz[1/2]} - \smespace d {\bz[1/2]}$ has only one irreducible component which is a relative
	effective Cartier divisor, 
	given by $\divsing d {\bz[1/2]}$, and that $\divsing d {\bz[1/2]}$ possesses a dense open $U$ as in the statement of the corollary.
	The irreducibility and relative dimension statements follow from \autoref{lemma:divii-geom-integral}.

	To conclude, we verify the existence of $U$.
	It suffices to check that $\divsing d {\bz[1/2]}$ is smooth along an open subscheme of each fiber over $B$.
	Because $\divsing d {\bz[1/2]}$ is a relative Cartier divisor by \autoref{lemma:divii-geom-integral}, it is flat over $\spec \bz[1/2]$, and so we only need check it
	that $\divsing d {\bz[1/2]}$ it is generically smooth in each fiber over $\spec \bz[1/2]$.
	This generic smoothness follows from the fact that it is geometrically integral in each fiber over $\spec \bz[1/2]$, as again was shown in
	\autoref{lemma:divii-geom-integral}.
\end{proof}

\subsection{Monodromy comparison}
	\label{subsection:proof-monodromy-tame}

Using \autoref{lemma:2-divisor-complement}, we can establish tameness of the cover
corresponding to $\mgeom n d k$ for $k$ a field of positive characteristic, and deduce that this group
does not depend on the characteristic of $k$, at least when $\chr(k) \nmid 2n$.
The idea of the proof is to check that the $\mgeom n d k$ cover associated to $\sspace n d k$
is tamely ramified over the boundary divisors of \autoref{lemma:2-divisor-complement}. This will allow us to employ the specialization map (which only exists for tame fundamental groups) relating
$\mgeom n d k$ and $\mgeom n d \bq$.

\begin{proposition}
	\label{proposition:monodromy-tame}
	For $n > 0$, 
	suppose $\chr(k) \nmid 2n$ and $d > 0$.
	Then,
	$\mgeom n d k = \mgeom n d \bq$.
\end{proposition}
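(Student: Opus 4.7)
The plan is to show that the geometric monodromy representation in positive characteristic factors through the tame fundamental group of $\smespace d {\ol k}$ relative to a good compactification, and then invoke the specialization isomorphism for tame fundamental groups to identify its image with that of its characteristic-zero counterpart. Throughout, I would work relatively over $\spec \bz[1/2n]$ with the compactification $\smespace d {\bz[1/2n]} \hookrightarrow \bp^{12d+3}_{\bz[1/2n]}$, whose boundary by \autoref{lemma:2-divisor-complement} is the union of the two relative effective Cartier divisors $\divii d {\bz[1/2n]}$ and $H_\infty := \bp^{12d+3}_{\bz[1/2n]} - \aff d {\bz[1/2n]}$, each smooth over a fiberwise dense open of $\spec \bz[1/2n]$.

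To make the tameness arguments tractable, I would first cut down to a relative curve. Using the smoothness output of \autoref{lemma:2-divisor-complement}, a relative Bertini argument produces a line $L \subset \bp^{12d+3}_{\bz[1/2n]}$ whose intersections with $\divii d {\bz[1/2n]}$ and with $H_\infty$ lie in the smooth loci and are étale over $\spec \bz[1/2n]$. A Lefschetz-type argument for étale fundamental groups (iteratively intersecting with generic hyperplanes) then yields surjections $\pi_1^{\et}((L \cap \smespace d {\bz[1/2n]})_{\ol \eta}) \twoheadrightarrow \pi_1^{\et}(\smespace d {\bz[1/2n], \ol \eta})$ for every geometric point $\ol \eta$ of $\spec \bz[1/2n]$, so comparing $\im \mono n d {\ol k}$ with $\im \mono n d {\ol \bq}$ reduces to comparing their images on the restricted cover over $L \cap \smespace d {\bz[1/2n]}$.

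The hard part is verifying tameness of the restricted cover at the finitely many points of $L \cap (\divii d {\bz[1/2n]} \cup H_\infty)$. A generic point of $\divii d k$ parameterizes a Weierstrass model acquiring a single $\mathrm{I}_2$ node, so by Picard--Lefschetz theory the local monodromy on $\sce^0[n]$ acts as a reflection through a vanishing cycle of order dividing $n$, hence coprime to $\chr(k)$; an analogous explicit calculation in local Weierstrass coordinates at $H_\infty$ (where the height $d$ Weierstrass family degenerates in a controlled way) yields local monodromy of order prime to $\chr(k)$ under the hypothesis $\chr(k) \nmid 2n$. I expect this tameness verification at the boundary to be the main technical obstacle, and it is the content the preceding \autoref{subsection:boundary-divisors} is designed to feed into.

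With tameness in hand, the étaleness of $L \cap (\divii d {\bz[1/2n]} \cup H_\infty)$ over $\spec \bz[1/2n]$ places us in the setting of the specialization isomorphism for tame fundamental groups of smooth relative curves with smooth relative boundary (SGA 1, Exposé XIII). This specialization identifies $\im \mono n d {\ol k}$ with $\im \mono n d {\ol \bq}$ on the restricted cover, and by the Lefschetz reduction this propagates to the full monodromy, yielding $\mgeom n d k = \mgeom n d \bq$.
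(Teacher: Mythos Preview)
Your overall architecture matches the paper's: reduce to a general line $L$ via Lefschetz, verify tameness of the restricted cover along the boundary $D = L \setminus (L \cap \smespace d S)$, and then invoke specialization for tame fundamental groups. The divergence, and the gap, is in how you propose to verify tameness.

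You propose to compute the local monodromy explicitly at each boundary point. At a generic point of $\divii d k$ this is plausible: the surface acquires an $A_1$ node, and Picard--Lefschetz for surfaces gives a reflection on the relevant piece of $H^2$, hence an element of order $2$ (not ``order dividing $n$'' as you write, but still prime to $p$ since $p \nmid 2n$). At $H_\infty$, however, your proposed ``analogous explicit calculation in local Weierstrass coordinates'' is not well-posed: $H_\infty$ lies outside $\aff d {\bz[1/2n]}$, so there is no Weierstrass family there at all, and hence no degeneration whose vanishing-cycle monodromy one could compute. The local monodromy at $L \cap H_\infty$ is the monodromy at infinity of a one-parameter family of elliptic surfaces, and there is no a priori reason its order should be prime to an arbitrary $p \nmid 2n$; it is a product of many Picard--Lefschetz reflections and can have large order inside $\o(\qsel n d k)$.

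The paper sidesteps this entirely, and this is precisely what you have set up but not used. You correctly arrange, via \autoref{lemma:2-divisor-complement} and Bertini, that $D = L \setminus U$ is a relative effective Cartier divisor \'etale over the DVR base $S$. The paper then observes that this \'etaleness alone already forces tameness of any finite \'etale cover of $U$: this is \cite[Expos\'e XIII, Proposition 5.5]{noopsortSGA1Grothendieck1971}. Concretely, the cover $\widetilde{T}_K \to U_K$ over the characteristic-zero generic fiber is automatically tame; since $D$ is \'etale over $S$, this tame cover extends uniquely to a tame cover of $(L,D)$ over $S$, and by purity over the regular scheme $U$ this extension must agree with the given $\widetilde{T}$. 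No local monodromy computation is needed at either boundary component. This is the actual payoff of \autoref{subsection:boundary-divisors}: the fiberwise generic smoothness of the boundary divisors is there to guarantee \'etaleness of $D$ over $S$ after cutting by $L$, which is the sole input to the tameness argument.
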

\begin{proof}
	Note that $\mono n d {\ol k}$ corresponds to a connected
	$\mgeom n d k$ finite \'etale cover of $\smespace d {\overline{k}}$.
	Since geometric connectedness is preserved upon base extension between fields,
	the statement holds when $\chr(k) = 0$. Therefore, for the rest of the proof we assume $\chr(k) =: p$ with $p \nmid 2n$.

	Take $S := \spec \sco_{\bz[1/2n], p}^{\sh}$, the strict henselization of the local ring at $(p) \subset \bz[1/2n]$.
	The surjection $\pi_1(\smespace d {\ol \bq}) \to \im \mono n d {\overline\bq}$ induced by the Selmer space corresponds to a Galois connected finite \'etale
	cover $T_{\overline \bq} \to \smespace d {\ol \bq}$ with Galois group $\im \mono n d {\overline\bq}$.
	We next construct a finite extension $S'$ of $S$ such that 
	$\im \mono n d {S'} = \im \mono n d {\overline\bq}$.
	By writing $\overline \bq$ as the colimit over all finite extensions of $\bq$,
	we may find a finite extension $K$ of $\bq$ and a cover $T_K \to \smespace d K$ so that the base change of this cover to $\overline{\bq}$
	is $T_{\overline \bq} \to \smespace d {\ol \bq}$.
	This yields a surjection $\pi_1(\smespace d K) \twoheadrightarrow \im \mono n d {\ol{\bq}}$.
	By replacing $S$ by a finite ramified extension $S'$, we may assume $\spec K$ factors through $S'$. 
	Now, relabel $S'$ by $S$ and let $\eta$ denote the generic point of this newly constructed scheme $S$.
	We claim that $\im \mono n d {\ol \bq} =  \im \mono n d {S}$.
	Indeed, since the map $\pi_1(\smespace d \eta) \to \pi_1(\smespace d S)$ is surjective due to normality of $\smespace d S$, we obtain 
	$\im \mono n d {\eta} = \im \mono n d {S}$.
	Further, because 
	$\im \mono n d K = \im \mono n d {\ol \bq}$ by construction of $K$,
	the containments $\im \mono n d {\ol \bq} \subset \im \mono n d \eta \subset \im \mono n d K$ implies
	$\im \mono n d \eta = \im \mono n d {\ol \bq}$.

	Since the residue field of $S$ is $\ol{\mathbb F}_p$, to conclude the proof, we claim it is enough to show that $T \times_{S} \spec \ol{\mathbb F}_p$
	is connected.
	Indeed, this will imply that for any field $k$ of characteristic $p$, $T \times_S \spec \ol{k}$ remains connected.
	Since the connected scheme $T \times_S \spec \ol{\mathbb F}_p$ is then the Galois closure of the cover $\sspace n d {\ol k} \to \espace d {\ol k}$,
	we obtain that the order of the monodromy group $\im \mono n d {\ol k}$
	is equal to the order of $\im \mono n d {\ol \bq} = \im \mono n d S$ because the corresponding covers $T$ and $T \times_S \spec \ol{\mathbb F}_p$
	have the same degree. 
	Since the orders agree and we have a containment $\im \mono n d {\ol k} \subset  \im \mono n d S$,
	this containment must in fact be an equality.

	Hence, it remains to verify $T \times_S \spec \ol{\mathbb F}_p$ is connected.
	To do so, we will intersect $\smespace d S$ with a suitable line.
	We next construct this line.
	Let $\smespace d S \hookrightarrow \bp^{12d+3}_S$ denote the open embedding described in \autoref{lemma:2-divisor-complement}.
	We claim that for a general line $L \simeq \bp^1_S$ with $L \subset \bp^{12d+3}_S$, the map
	$\pi_1^{\et}(L_{\ol \bq} \cap \smespace d {\ol \bq}) \ra \pi_1^{\et}(\smespace d {\ol \bq})$ is surjective.
	This follows from the Lefschetz hyperplane theorem applied over the complex numbers
	\cite[Part II, Theorem 1.2]{goresky1988stratified}
	and invariance of the fundamental group of a quasi-projective variety upon base change between
	algebraically closed fields of characteristic $0$
	\cite{landesman:invariance-of-the-fundamental-group} or 
\cite[Expos\'e XIII, Proposition 4.6]{noopsortSGA1Grothendieck1971}.

	Given any $L$ as above, let $U := L \cap \smespace d S$ and let $D := L - U$.
	From the connected finite \'etale cover $T \to \smespace d S$, we obtain a finite \'etale cover
	$T_U := T \times_{\smespace d S} U \to U$ which is connected because for any $L$ as above, 
	$\pi_1^{\et}(L_{\ol \bq} \cap \smespace d {\ol \bq}) \ra \pi_1^{\et}(\smespace d {\ol \bq})$ is surjective.
	Then, $T_U$ corresponds to a surjective map $\pi_1(U) \twoheadrightarrow \pi_1(\smespace d S) \twoheadrightarrow \im \mono n d {\ol{\bq}} \subset \gl(\vsel n d {\ol{\bq}})$.

	For any such $L$, suppose we knew that $T_U \times_S \spec {\ol{\mathbb F}_p}$ is connected.
	Then, we claim $T \times_S \spec \ol{\mathbb F}_p$ must also be connected.
	Indeed, if $T \times_S \spec \ol{\mathbb F}_p$ were disconnected, it is necessarily a disjoint union of two nonempty finite \'etale covers of
	$\espace d {\ol{\mathbb F}_p}$, and so its restriction to $U$ would also be a disjoint union of two nonempty finite \'etale covers,
	hence disconnected.
	Therefore, in order to verify $T \times_S \spec \ol{\mathbb F}_p$ is connected, and hence that $\im \mono n d {\ol k} =  \im \mono n d S$
	it is enough to find a line $L$ as above so that $T_U \times_S \spec {\ol{\mathbb F}_p}$ is connected.

	To prove $T_U \times_S \spec {\ol{\mathbb F}_p}$ is connected, we want to apply 
	\cite[Expos\'e XIII, 2.10]{noopsortSGA1Grothendieck1971}
	in 
	\eqref{equation:tame-diagram} below. For this we will need to verify that
	$D$ is \'etale over $S$ and $T_U \to U$ is tamely ramified.
	In fact, by
	relative Abhyankar's lemma \cite[Expos\'e XIII, Proposition 5.5]{noopsortSGA1Grothendieck1971}
	it suffices to verify that $D$ is a relative effective Cartier divisor which is \'etale over $S$,
	as this then implies the cover $T_U \to U$ corresponding to the monodromy representation $\mono n d {S}$ is tamely ramified.

	Using \autoref{lemma:2-divisor-complement}, $D$ is obtained as the intersection of $L$ with the two divisors 
	$\bp^{12d+3}_S - \aff d S$ and $\divsing d S$. Both of these divisors are smooth along a dense open meeting the special fiber over $S$ by
	\autoref{lemma:2-divisor-complement}. This is the step we use that $2 \nmid \chr (k)$, see \autoref{remark:char-2}.
	Hence, since $L$ was chosen generally, we may arrange by applications of Bertini's theorem
	that $D$ is \'etale over $S$, as we now explain.
	First, by choosing $L$ generically, 
	we may assume by Bertini's theorem that its intersection with $D$ has generic and special fiber which both have degree equal to
	the degree of $\divsing d S$ plus the degree of $\bp^{12d+3}_S - \aff d S$.
	Properness of $L \cap D$ implies $L \cap D$ is finite over $S$, with both its special and generic fiber having the same degree.
	Therefore, $L \cap D$ is flat over $S$.
	By Bertini's theorem, we may arrange that the special fiber $L \cap D$ is \'etale over $\spec \ol{\mathbb F}_p$, and therefore
	we obtain that $L \cap D$ is flat with \'etale special fiber, hence \'etale over $S$, as we wanted to show.
	Therefore, relative Abhyankar's lemma
	\cite[Expos\'e XIII, Proposition 5.5]{noopsortSGA1Grothendieck1971}
	implies $\mono n d {S}$ is tamely ramified.

	We now conclude the proof by showing how tameness of $T_U \to U$ implies 
	$T_U \times_S \spec \overline{\mathbb F}_p$ is connected, and hence $\mgeom n d k= \mgeom n d {\bq}$.
	Since $\pi_1^{\et}(U) \ra \im \mono n d {\ol \bq}$ factors through $\pi_1^{\tame}(U)$
	we obtain a commutative diagram
	\begin{equation}
		\label{equation:tame-diagram}
		\begin{tikzcd} 
			\pi_1^{\tame}(U_{\ol \bq}) \ar{r} \ar{dr} \ar{d}{\speci} & \pi_1^{\et}(U)\ar{r} \ar {d} & \pi_1^{\et}(\smespace d S) \ar{d} \\
			\pi_1^{\tame}(U_{\ol {\mathbb F}_p}) \ar {r} & \pi_1^{\tame}(U) \ar{r} & \gl(\vsel n d k).
	\end{tikzcd}\end{equation}
	The surjective map $\speci : \pi_1^{\tame}(U_{\ol \bq}) \ra \pi_1^{\tame}(U_{\ol {\mathbb F}_p})$ is the {\em specialization map} (see 
	\cite[Th\'eor\'eme 4.4 and Proposition 5.1]{orgogozo2000theoreme}, or alternatively 
	\cite[Expos\'e XIII, 2.10]{noopsortSGA1Grothendieck1971}).
%for our purposes surjectivity of the map in \cite[Proposition 5.1]{orgogozo2000theoreme} is enough, which is quite a bit easier to prove as essentially follows from the proof of \cite[Lemma 4.2]{orgogozo2000theoreme}, and the first 2 lines of proposition 5.1 where they check surjectivity, if you examine the proof of 4.2, you will see that even when you slice by the uniformizer, Y remains regular, and so the special fiber will be regular and connectedness is equivalent to irreducibility
	By construction, the image of $\pi_1^{\tame}(U_{\ol \bq}) \simeq \pi_1^{\et}(U_{\ol \bq})$ in $\gl(\vsel n d {\ol \bq})$ is $\mgeom n d {\bq}$.
	By commutativity of the diagram and surjectivity of $\speci$, it follows $\pi_1^{\tame}(U_{\ol {\mathbb F}_p})$ also has image $\mgeom n d {\bq}$.
	Since the map 
	$\pi_1^{\tame}(U_{\ol {\mathbb F}_p}) \to \mgeom n d {\bq}$
	corresponds to the finite \'etale cover $T_U \times_S \spec \overline{\mathbb F}_p$
	surjectivity of 
	$\pi_1^{\tame}(U_{\ol {\mathbb F}_p}) \twoheadrightarrow \mgeom n d {\bq}$
	implies 
	$T_U \times_S \spec \overline{\mathbb F}_p$ is connected.
	%This means the resulting $\mgeom n d {\bq}$ cover of $U_{\ol{\mathbb F}_p}$ is connected. (Explicitly, this ``resulting $\mgeom n d {\bq}$ cover'' is a connected component of $\widetilde{T}_{\ol{\mathbb F}_p}$.) Therefore, the resulting $\mgeom n d {\bq}$ cover of $\smespace d {\ol{\mathbb F}_p}$ is also connected,
	%meaning the same is true for $\smespace d {\ol k}$ with $k$ any field of characteristic $p$.
	%In other words, the image $\mgeom n d k$ is equal to $\mgeom n d {\bq}$.
\end{proof}

\subsection{Computing the Monodromy and number of components}
	\label{subsection:monodromy-image}

Using \autoref{proposition:monodromy-tame} and the computation of the monodromy over $\bc$ from
\cite[Theorem 4.10]{de-jongF:on-the-geometry-of-principal-homogeneous-spaces},
we can now prove \autoref{theorem:n-monodromy-image}.
Following this, we compute the number of components of the Selmer space.

\begin{proof}[Proof of \autoref{theorem:n-monodromy-image}]
	First, $\pi_1(\smespace d {\ol k}) \hookrightarrow \pi_1(\smespace d k)$ induces an inclusion
	$\im \mono n d {\ol k} \subset \im \mono n d {k}$.
	Observe that the quadratic form $\qsel n d k$ is preserved by the action
	of $\pi_1(\smespace d k)$ as it is induced by a natural cup
	product and Poincar\'e duality, (as described in \cite[p. 784-785]{de-jongF:on-the-geometry-of-principal-homogeneous-spaces} and also \cite[p. 253-254]{artinSD:the-shafarevich-tate-conjecture-for-pencils-of-elliptic-curves},) functorial under the action of $\pi_1(\smespace d k)$.
	It follows that $\im \mono n d {k} \subset \o(\qsel n d k)$.
	It remains to show $r_n(\osp(\qsel \bz d {\bc})) \subset \im \mono n d {\ol k}$.

	By \autoref{proposition:monodromy-tame}	it suffices to calculate $\im \mono n d {\ol k}$ in the case $\ol k = \bc$.
	The result then essentially follows from \cite[Theorem 4.10]{de-jongF:on-the-geometry-of-principal-homogeneous-spaces},
	as we now explain.
	In \cite[Theorem 4.10]{de-jongF:on-the-geometry-of-principal-homogeneous-spaces}, it is implicitly assumed
	$d \geq 2$, as stated in the first line of the proof of \cite[Theorem 4.9]{de-jongF:on-the-geometry-of-principal-homogeneous-spaces}.
	This hypothesis is used in \cite[p. 787, lines 25-28]{de-jongF:on-the-geometry-of-principal-homogeneous-spaces} to ensure
	that a particular related Dynkin diagram ``contains a certain subdiagram with $6$ vertices.''
%	This is needed to apply result theorem 5.3.4 in Ebeling's ``the monodromy groups of isolated singularities of complete intersections'' as mentioned in Lonne's paper ``monodromy groups of regular elliptic surfaces'' in the proof of proposition 1

At this point, it may be helpful for the reader to recall our constructions of the Selmer stack $\sstack n d B$ and moduli stack of minimal Weierstrass models $\estack d B$ from \autoref{definition:selmer-stack}.
	Since $\smsstack n d \bc \ra \smestack d \bc$ is a quotient of $\smsspace n d \bc \ra \smespace d \bc$ by the action of the smooth connected algebraic group $\bg_a^{2d+1} \rtimes\bg_m$,
	the monodromy representation associated to the map on the coarse spaces of $\smsstack n d \bc \ra \smestack d \bc$ has the same image as the representation
	$\mono n d \bc$ associated to 
	$\smsspace n d \bc \ra \smespace d \bc$.
	The previous statement can be verified algebraically, though it is even easier to verify it topologically, which is viable as we are working over $\bc$.
%	In algebraic geometry terms, this holds because the quotient of a connected scheme $X$ by the action of the connected group $\bg_a^{2d+1} \rtimes \bg_m$ is connected if and only if $X$ is connected,
%	and the image of the monodromy representation is $G$ if and only if the associated $G$ cover is connected.
	In \cite[Theorem 4.10]{de-jongF:on-the-geometry-of-principal-homogeneous-spaces}, it is shown that for $d \geq 2$,
	the resulting monodromy map on coarse spaces has image 
	containing 
	$r_n(\osp(\qsel \bz d {\bc}))$.
	Hence the same is true of $\mono n d \bc$.
	Note here that we are using $\osp(\qsel \bz d {\bc})$
and not $\o^*_{+1}(\qsel \bz d {\bc})$ 
(see \autoref{subsubsection:group-notation})
since in the proof of
\cite[Theorem 4.10]{de-jongF:on-the-geometry-of-principal-homogeneous-spaces},
\cite[Theorem 5.4.3]{ebeling:monodromy-groups-isolateda} is applied to the
lattice $\vsel \bz d \bc$, which satisfies the hypotheses of the
beginning of \cite[\S5.3]{ebeling:monodromy-groups-isolateda} for $\varepsilon = -1$.

	Finally, non-degeneracy of $\qsel n d k$ follows from the explicit description of $\qsel n d k$, see \autoref{remark:quadratic-form-determination}.
\end{proof}

We next record a standard lemma on monodromy actions for completeness.
This will enable us to relate the geometric monodromy of the Selmer space to the number of its irreducible components.
\begin{lemma}
	\label{lemma:number-of-components-via-monodromy}
	Let $U$ be a noetherian integral normal scheme and let
	$\rho: \pi_1(U) \ra \gl(V)$ denote the monodromy representation
	associated to a finite \'etale cover $\pi: X \ra U$ representing a sheaf of free $\bz/n\bz$ modules.
	The irreducible components 
	of
	$X$ can be bijectively identified with
	orbits of
	$\rho$ on $V$.
\end{lemma}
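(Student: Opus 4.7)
The plan is to prove this in two steps: first identify the connected components of $X$ with the orbits of $\rho$ on $V$, and then argue that connected components of $X$ coincide with irreducible components.

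For the first step, I would invoke Grothendieck's Galois theory for the \'etale fundamental group. Since $U$ is noetherian and integral, it is connected, so the functor sending a finite \'etale cover to its geometric fiber over a chosen base point $\bar u$ is an equivalence between the category of finite \'etale covers of $U$ and the category of finite continuous $\pi_1(U, \bar u)$-sets (SGA~1, Expos\'e~V). This equivalence carries disjoint unions to disjoint unions of $\pi_1(U)$-sets, and in particular identifies connected components of $X$ with orbits of $\pi_1(U, \bar u)$ on $X_{\bar u}$. By the definition of $\mono{}$, the action of $\pi_1(U)$ on $X_{\bar u} = V$ is precisely $\rho$, so connected components of $X$ are in bijection with orbits of $\rho$ on $V$.

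For the second step, I would note that $\pi : X \to U$ is \'etale and $U$ is normal, hence $X$ is normal as well (normality ascends along \'etale morphisms, e.g.\ \cite[\href{https://stacks.math.columbia.edu/tag/025P}{Tag 025P}]{stacks-project}). Any connected component of $X$ is then an open and closed normal subscheme, and a connected normal scheme is integral, hence irreducible. Thus the decomposition of $X$ into connected components agrees with its decomposition into irreducible components.

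Combining the two steps gives the desired bijection. There is no real obstacle here; the only subtle points are to use that $U$ is connected (guaranteed by integrality) so that Grothendieck's equivalence applies, and that normality, rather than mere regularity, is exactly what is needed to promote ``connected'' to ``irreducible'' on the total space of the cover.
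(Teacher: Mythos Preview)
Your proposal is correct. Both your argument and the paper's reach the same conclusion via standard Galois-theoretic reasoning, but they invoke the normality hypothesis at different points. You use normality to lift from $U$ to $X$ (\'etale over normal is normal) and then argue that connected normal noetherian schemes are irreducible, thereby equating connected components with irreducible components. The paper instead passes to the generic point $\eta$ of $U$, identifies irreducible components of $X$ with $\pi_1(\eta)$-orbits on the geometric generic fiber, and then uses normality of $U$ to ensure the surjection $\pi_1(\eta) \twoheadrightarrow \pi_1(U)$ (SGA~1, Expos\'e~V, Proposition~8.2), so that $\pi_1(\eta)$-orbits coincide with $\pi_1(U)$-orbits. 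Your route is arguably cleaner in that it avoids the generic point entirely; the paper's route makes slightly more transparent why the components all dominate $U$.
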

\begin{proof}
	Let $\eta$ denote the generic point of $U$ and 
	let $X_{\eta}$ the generic fiber of $\pi$.
	Observe that $X_{\eta}$ has $\deg \pi$ distinct geometric points.
	Two such geometric points lie in the same irreducible component if and only if
	there is some element of $\pi_1(\eta)$ taking one to the other.
	Therefore, the set of irreducible components of $X$ is identified with
	orbits of the action of $\pi_1(\eta)$. 
	These orbits are in turn identified
	with orbits of
	the action of $\rho$ via the map $\pi_1(\eta) \twoheadrightarrow \pi_1(U)$,
	which is surjective because $U$ is integral and normal \cite[Expos\'e V, Proposition 8.2]{noopsortSGA1Grothendieck1971}.
	Hence, the number irreducible components of $X$ dominating
	$U$
	is the same as the number of orbits of 
	$\rho$ on $V$.
	Finally, because $X \ra U$ is \'etale, all irreducible components dominate $U$,
	so the number of irreducible components of $X$ is the number of orbits of $\rho$ on $V$.
\end{proof}

We now determine the number of irreducible components of the Selmer space.
\begin{corollary}
	\label{corollary:number-components}
	For $k$ a field of characteristic prime to $2n$ and $d \geq 2$, the cover $\sspace n d k \ra \espace d k$ has $\sum_{m \mid n} m$ irreducible components, all of which 
	are geometrically irreducible and dominate $\espace d k$.
\end{corollary}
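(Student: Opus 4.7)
The plan is to reduce the count to an orbit problem for the monodromy action on $\vsel n d k$ and then appeal to the orbit classification carried out in \cite{de-jongF:on-the-geometry-of-principal-homogeneous-spaces}. All of the difficult geometric input already lives in \autoref{theorem:n-monodromy-image} and \autoref{corollary:finite-etale-smooth-locus}, so at this point the argument should be essentially combinatorial.

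First I would reduce to the finite \'etale locus. Since $\pi : \sspace n d k \to \espace d k$ is \'etale (hence flat) by \autoref{lemma:representability-sspace}, and $\smespace d k \subset \espace d k$ is a dense open in an irreducible scheme by \autoref{lemma:reduced-discriminant}, the complement $\sspace n d k - \smsspace n d k = \pi^{-1}(\espace d k - \smespace d k)$ has positive codimension in $\sspace n d k$. Thus $\smsspace n d k$ is dense open in $\sspace n d k$, and taking closures gives a bijection between irreducible components of $\smsspace n d k$ and irreducible components of $\sspace n d k$. Once one shows the components of $\smsspace n d k$ dominate $\smespace d k$, their closures dominate $\espace d k$, giving the ``dominate'' claim.

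Next, \autoref{corollary:finite-etale-smooth-locus} gives that $\smsspace n d k \to \smespace d k$ is finite \'etale, representing a locally constant sheaf of free $\bz/n\bz$-modules of rank $12d-4$. Applying \autoref{lemma:number-of-components-via-monodromy}, irreducible components of $\smsspace n d k$ biject with orbits of $\im \mono n d k$ on $\vsel n d k$, and geometrically irreducible components biject with orbits of $\im \mono n d {\ol k}$. By \autoref{theorem:n-monodromy-image} we have the inclusions
\[
r_n(\osp(\qsel \bz d k)) \;\subset\; \im \mono n d {\ol k} \;\subset\; \im \mono n d k \;\subset\; \o(\qsel n d k).
\]
If the extreme groups act with the same orbits on $\vsel n d k$, the same is forced for every intermediate group, yielding simultaneously the claimed count and the geometric irreducibility of every arithmetic component.

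The remaining task, and the main obstacle, is purely lattice-theoretic: show that both $\o(\qsel n d k)$ and $r_n(\osp(\qsel \bz d k))$ act on $\vsel n d k$ with exactly $\sum_{m \mid n} m$ orbits. I would use the explicit identification of $\qsel \bz d k$ with the lattice $U^{\oplus(2d-2)} \oplus (-E_8)^{\oplus d}$ recorded in \autoref{remark:quadratic-form-determination} and invoke the orbit classification from \cite[Lemma 4.8, Theorem 4.9, Theorem 5.4]{de-jongF:on-the-geometry-of-principal-homogeneous-spaces}. Geometrically, that classification matches orbits of Selmer classes (for either group) with isomorphism classes of balanced rank-$m$ projective bundles on $\bp^1$ as $m$ ranges over divisors of $n$, and there are exactly $m$ such bundles for each $m$, giving the total $\sum_{m \mid n} m$, as sketched in Heuristic~(3) of \autoref{remark:three-heuristics-average}. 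The coincidence of orbit counts under the full orthogonal group and the $-1$-spinor kernel reflects the fact that the entire orbit invariant is encoded in the splitting type of the associated vector bundle.
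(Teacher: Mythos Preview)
Your structure matches the paper's almost exactly: pass to the dense open $\smsspace n d k$ (the paper does this implicitly via ``$\pi$ is \'etale, so every component dominates''), apply \autoref{lemma:number-of-components-via-monodromy} to identify components with monodromy orbits on $\vsel n d k$, and use the sandwich
\[
r_n(\osp(\qsel \bz d k)) \subset \im \mono n d {\ol k} \subset \im \mono n d k \subset \o(\qsel n d k)
\]
from \autoref{theorem:n-monodromy-image} to conclude once the two extreme groups are shown to have identical orbit decompositions.

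The gap is in your last paragraph. The results you cite from \cite{de-jongF:on-the-geometry-of-principal-homogeneous-spaces} (Lemma~4.8, Theorem~4.9, Theorem~5.4) are geometric statements over $\bc$: they identify components of the Selmer space over $\bc$ with balanced projective bundles, hence compute the number of orbits of $\im \mono n d {\bc}$. They do \emph{not} directly compute the orbit count for the abstract groups $\o(\qsel n d k)$ and $r_n(\osp(\qsel \bz d k))$, and knowing the count for an intermediate group does not pin down both ends of the sandwich. In particular, the inequality runs the wrong way for the smaller group: one must show that $r_n(\osp)$ already acts transitively on each invariant class, and the bundle picture does not give this. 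The correct reference is \cite[Lemma~4.12]{de-jongF:on-the-geometry-of-principal-homogeneous-spaces}, which proves (purely lattice-theoretically) that for each $i \in \bz/n\bz$ the primitive vectors $v$ with $\qsel n d k(v)=i$ form a single orbit under both $r_n(\osp)$ and $\o$. The paper then finishes by partitioning $\vsel n d k$ according to the largest $t \mid n$ with $v \in t \cdot \vsel n d k$, identifying the action on each stratum with that of $\im \mono {n/t} d {\ol k}$ on $\vsel {n/t} d k$, and summing $\sum_{t \mid n} n/t = \sum_{m \mid n} m$.
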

\begin{proof}
	Because the formation of $\sspace n d k$ is compatible with base change on the field $k$, 
	it suffices to show that for any field $k$, $\sspace n d k \ra \espace d k$ has $\sum_{m \mid n} m$ irreducible components, all of which dominate $\espace d k$. 
	Indeed, having $\sum_{m \mid n} m$ irreducible components over both $k$ and $\overline{k}$ implies all irreducible components must be {\em geometrically} irreducible.
	Because $\sspace n d k \ra \espace d k$ is \'etale, all irreducible components dominate $\espace d k$.

	It remains to show $\sspace n d k$ has $\sum_{m \mid n} m$ irreducible components.
	The irreducible components of $\sspace n d k$ are the identified with the orbits of
	$\im \mono n d k \subset \gl(\vsel n d k)$ on $\vsel n d k$ by \autoref{lemma:number-of-components-via-monodromy}.
	Recall that a vector $\alpha$ in a free $\bz/n\bz$ module $A$ is {\em primitive} if $\alpha$ cannot be written in the form $m\alpha'$ with $m$ non-invertible in $\bz/n\bz$.
	It is shown in \cite[Lemma 4.12]{de-jongF:on-the-geometry-of-principal-homogeneous-spaces} and its proof that for each $i \in \bz/n\bz$,
	the set of primitive vectors $v \in \vsel n d k$ with $\qsel n d k(v) = i$ form a single orbit under the action of both $r_n(\osp(\qsel \bz d k))$ and $\o(\qsel n d k)$, for $r_n$ as in the statement of \autoref{theorem:n-monodromy-image}.
	Therefore, by \autoref{theorem:n-monodromy-image}, the set of primitive vectors form a single orbit under the action of $\mono n d {\ol{k}}$.
	Hence, there are $n$ orbits of $\mgeom n d k$ corresponding to primitive vectors.
	For every non-primitive vector $v \in \vsel n d k$, there is a unique $t\mid n$ so that $v = t v'$ for $v'$ primitive.
	We now partition $\vsel n d k$ by this value of $t$.
	For each $t \mid n$, we can identify the action of $\mgeom n d k$ on $t \cdot (\vsel n d k)$ with the action of $\mgeom {n/t} d k$ on $\vsel {n/t} d k$. 
	As shown above this action
	has $n/t$ orbits whose union is the set of primitive vectors. 
	We therefore have that the total number of orbits is $\sum_{t \mid n} \frac{n}{t} = \sum_{m \mid n} m$.
\end{proof}

\section{Completing the proof of \autoref{theorem:number-components}}
\label{section:finishing-proof}
We are nearly ready to prove our main theorem, \autoref{theorem:number-components}, and the proof is completed in \autoref{subsection:proof-of-main-theorem}, at the end of this section.
To start, due to the fact that $\sspace n d B$ is only an algebraic space and not a scheme, we need the following
slight generalization of the Lang-Weil estimate.
\begin{lemma}
	\label{lemma:algebraic-space-lang-weil}
	Suppose $X$ is a algebraic space of finite type over $\bz$.
	Let $q$ be a prime power so that $\dim X_{\mathbb F_q} = d$.
	Then, if $X_{\mathbb F_q}$ is geometrically integral, $\#X(\mathbb F_{q}) = q^d + O_X(q^{d-1/2})$.
\end{lemma}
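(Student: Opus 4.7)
The plan is to reduce to the classical Lang--Weil estimate for schemes via the standard fact that every algebraic space of finite type over a field contains a dense open subscheme. Concretely, I would proceed by induction on $d = \dim X_{\mathbb F_q}$, with the base case $d = 0$ being trivial since a $0$-dimensional algebraic space of finite type over $\mathbb F_q$ is a disjoint union of finitely many closed points.

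For the inductive step, I would invoke the result (\cite[Tag 03JG]{stacks-project}, essentially Knutson's theorem) that any algebraic space of finite type over $\mathbb F_q$ admits a dense open subspace which is a scheme. Applying this to $X_{\mathbb F_q}$, choose a dense open subscheme $U \subset X_{\mathbb F_q}$. Since $X_{\mathbb F_q}$ is geometrically integral, $U$ is a geometrically integral scheme of finite type over $\mathbb F_q$ of dimension $d$. The complement $Z := (X_{\mathbb F_q} \setminus U)_{\mathrm{red}}$ is a closed algebraic subspace of dimension strictly less than $d$.

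Since $U$ is a scheme, the classical Lang--Weil estimate \cite[Theorem 1]{lang-weil} applies directly to give $\#U(\mathbb F_q) = q^d + O_U(q^{d - 1/2})$. For $Z$, which is an algebraic space of finite type over $\mathbb F_q$ of dimension at most $d - 1$, repeated application of the dense-open-subscheme fact (or the inductive hypothesis itself, applied to each geometrically irreducible component of each base change of $Z$) yields $\#Z(\mathbb F_q) = O_Z(q^{d-1})$, which is absorbed into the $O(q^{d-1/2})$ error term. Because for an algebraic space (as opposed to a stack) there are no nontrivial automorphisms of $\mathbb F_q$-points, $X(\mathbb F_q)$ decomposes as the disjoint union $U(\mathbb F_q) \sqcup Z(\mathbb F_q)$, so combining the two estimates gives the desired bound.

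The main subtlety, and the only nontrivial input beyond classical Lang--Weil, is the appeal to the existence of a dense open subscheme inside an algebraic space of finite type; everything else is a formal dimension induction together with the observation that geometric integrality is inherited by any dense open. No compactness or properness is needed, and one can even allow $X$ to be only quasi-separated, which matches the hypotheses in \autoref{lemma:representability-sspace}.
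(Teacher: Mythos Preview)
Your proposal is correct and follows essentially the same approach as the paper: invoke the existence of a dense open subscheme in a quasi-separated algebraic space (the paper cites \cite[Theorem 6.4.1]{olsson2016algebraic} rather than the Stacks Project) and then use noetherian induction to stratify $X_{\mathbb F_q}$ into locally closed subschemes, reducing to the scheme-theoretic Lang--Weil estimate on each stratum. The only cosmetic difference is that the paper phrases the reduction as a single stratification followed by an appeal to \cite[Theorem 7.7.1(ii)]{poonen:rational-points-on-varieties}, whereas you frame it as an induction on dimension.
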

\begin{proof}
	Although $X$ may not be a scheme, it has a dense open subspace which is a scheme by \cite[Theorem 6.4.1]{olsson2016algebraic}.
	To apply \cite[Theorem 6.4.1]{olsson2016algebraic}, we are using that $X$ is quasi-separated because it is finite type over $\bz$.
	Therefore, via noetherian induction, we can write $X$ as a finite union of locally closed subspaces which are schemes, so that every
	$\mathbb F_q$ point of $X$ factors through one of these locally closed subspaces.
	Then, the claim follows from 
	\cite[Theorem 7.7.1(ii)]{poonen:rational-points-on-varieties}
%	(see also \cite[Theorem 1]{langW:number-of-points-of-varieties-in-finite-fields})
	applied to each of these locally closed subspaces.
\end{proof}
We next clarify our convention on counting points of stacks, and state a lemma which will enable us to relate
the point counts for the stack $\sstack n d B$ and the algebraic space $\sspace n d B$.
\begin{definition}
	\label{definition:}
	Let $\scx$ an algebraic stack of finite type over $\bz$.
	For $x \in \scx$, let $\aut_x := x \times_{\scx} x$ denote the automorphism
	group scheme of $\scx$ at $x$.
		Then, define $\# \scx(\mathbb F_q) := \sum_{x \in \scx(\mathbb F_q)} \frac{1}{\# \aut_x(\mathbb F_q)}$. 
	\end{definition}
\begin{lemma}
	\label{lemma:stack-point-count}
	Let $X$ be a smooth algebraic space of finite type over an open subscheme
	$S \subset \spec \bz$
	and let $G$ be a smooth group scheme over $S$ with geometrically
	connected nonempty fibers, with an action on $X$.
	Let $\scx := \left[ X/G \right]$.
	Then,
	\begin{align*}
		\#X \left( \mathbb F_q \right) = \#G\left( \mathbb F_q \right) \cdot \# \scx(\mathbb F_q).
	\end{align*}
	\end{lemma}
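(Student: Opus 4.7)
The plan is to use the standard groupoid-cardinality formula for a quotient stack, with Lang's theorem as the main input. I would first describe the objects of the groupoid $\scx(\mathbb F_q)$ explicitly: they are pairs $(P, \phi)$ where $P$ is a $G_{\mathbb F_q}$-torsor over $\spec \mathbb F_q$ and $\phi \colon P \to X_{\mathbb F_q}$ is a $G$-equivariant morphism, with morphisms being torsor isomorphisms compatible with $\phi$.

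Next, I would invoke Lang's theorem. Since $\mathbb F_q \to S$ factors through $S$ (as $q$ defines a point of $S \subset \spec \bz$), the base change $G_{\mathbb F_q}$ is smooth, geometrically connected, and of finite type over the finite field $\mathbb F_q$, so $H^1(\mathbb F_q, G_{\mathbb F_q}) = 0$ and every $G_{\mathbb F_q}$-torsor over $\spec \mathbb F_q$ is trivial. Thus every object of $\scx(\mathbb F_q)$ is isomorphic to one of the form $(G_{\mathbb F_q}, \phi_x)$ where $x \in X(\mathbb F_q)$ and $\phi_x(g) = g \cdot x$. A direct unwinding of the torsor isomorphisms shows that $(G_{\mathbb F_q}, \phi_x) \cong (G_{\mathbb F_q}, \phi_{x'})$ iff $x' = g \cdot x$ for some $g \in G(\mathbb F_q)$, and that the automorphism group of $(G_{\mathbb F_q}, \phi_x)$ is the $\mathbb F_q$-points of the scheme-theoretic stabilizer $\stab_G(x)$.

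Combining these observations, the isomorphism classes of $\scx(\mathbb F_q)$ are in bijection with $G(\mathbb F_q)$-orbits on $X(\mathbb F_q)$, and
\[
\#\scx(\mathbb F_q) \;=\; \sum_{[x] \in X(\mathbb F_q)/G(\mathbb F_q)} \frac{1}{\#\stab_G(x)(\mathbb F_q)}.
\]
The orbit-stabilizer theorem applied to the action of the finite group $G(\mathbb F_q)$ on the finite set $X(\mathbb F_q)$ gives $\#(G(\mathbb F_q) \cdot x) = \#G(\mathbb F_q)/\#\stab_G(x)(\mathbb F_q)$; summing this identity over orbits yields
\[
\#X(\mathbb F_q) \;=\; \#G(\mathbb F_q) \cdot \#\scx(\mathbb F_q),
\]
as desired. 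There is essentially no obstacle here: the smoothness of $X$ plays no role beyond ensuring the stack makes sense, and the hypotheses on $G$ (smooth with geometrically connected fibers) are exactly what is needed to apply Lang's theorem. The mild point requiring care is simply to check that the automorphism group of $(G_{\mathbb F_q}, \phi_x)$ really is $\stab_G(x)(\mathbb F_q)$ rather than some larger group scheme's $\mathbb F_q$-points, but this follows from the explicit description of torsor automorphisms as left multiplication by $G(\mathbb F_q)$.
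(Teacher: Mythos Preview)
Your proposal is correct and takes essentially the same approach as the paper: the paper simply base changes to $\mathbb F_q$ and cites \cite[Lemma 2.5.1]{behrend:lefschetz-trace-formula}, remarking that the proof ``essentially amounts to the orbit stabilizer theorem.'' You have written out precisely that argument in detail, using Lang's theorem to trivialize torsors and then the orbit--stabilizer identity, which is exactly what Behrend's lemma does.
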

%i don't think being smooth is relevant in the statement of the lemma, but it's satisfied in my application, and that is the assumption given in Behrend's paper, so I'm writing smooth in the assumption here
\begin{proof}
	After base changing along $\spec \mathbb F_q \ra \spec \bz$, we can assume
	that $X, G$, and $\scx$ lie over $\spec \mathbb F_q$.
	The claim is then established in 
	\cite[Lemma 2.5.1]{behrend:lefschetz-trace-formula},
	whose proof essentially amounts to the orbit stabilizer theorem.
%	Let $\stab_G(x)$ the subgroup scheme of $G$ stabilizing $x$ and let
%	$\orbit_G(x)$ denote the $G$-orbit of $x$ in $X$.
%	Let $p: X \ra \left[ X/G \right]$ denote the quotient map.
%By definition of the quotient stack, for $x \in X(\mathbb F_q)$ and $x' = p(x) \in \scx(\mathbb F_q)$, a point $y \in X(\mathbb F_q)$ satisfies $p(y) = x'$ if and only if $y \in \orbit_G(x)(\mathbb F_q)$.
%Note that by Lang's theorem $H^1(\mathbb F_q, G) = 0$ so
%the map $X(\mathbb F_q) \rightarrow \scx(\mathbb F_q)$ is surjective.
%Since $\stab_G(x) = \aut(x')$ as group schemes, 
%\begin{align*}
%\#\left\{ y \in X(\mathbb F_q) : p(y) = x' \right\} = \# \orbit_G(x)(\mathbb F_q) = \frac{\#G(\mathbb F_q)}{\# \stab_G(x)(\mathbb F_q)} = \frac{\#G(\mathbb F_q)}{\# \aut(x')(\mathbb F_q)}.
%\end{align*}
%Hence, 
%\begin{align*}
%		\# X(\mathbb F_q) &= \sum_{x' \in \scx(\mathbb F_q)} \# \{y \in X(\mathbb F_q) : p(y) = x'\} \\
%		&= \sum_{x' \in \scx(\mathbb F_q)} \frac{\#G(\mathbb F_q)}{\# \aut_{x'} (\mathbb F_q)} \\
%		&= \# G\left( \mathbb F_q \right) \cdot \# \scx\left( \mathbb F_q \right).\qedhere
%\end{align*}
\end{proof}

Using the above lemmas in conjunction with the relations between points of the Selmer space and sizes of Selmer groups from
\autoref{corollary:equality-selmer-fibers} and \autoref{corollary:uniform-bound-selmer-fibers},
we can relate the number of points of the Selmer space to the number of Selmer elements for elliptic curves.
\begin{proposition}
	\label{proposition:stacky-selmer-count}
Fix $d > 0$.
	\begin{enumerate}
		\item For a fixed prime power $q$ with $\gcd(q,2) =1$,
	\begin{align}
		\label{equation:weierstrass-stacky-count}
		\# \estack d {\bz[1/2]}(\mathbb F_q) &= \sum_{E/\mathbb F_q(t), \,  h(E) = d} \frac{1}{\# \aut(E)}.
 \end{align}
 \item
Letting $q$ range over prime powers with $\gcd(q,2n) = 1$,
 \begin{align}
		\label{equation:selmer-stacky-count}
		\# \sstack n d {\bz[1/2n]}(\mathbb F_q) &= \left( 1 + O_{n,d}(q^{-1/2}) \right)  \sum\limits_{\substack{E/\mathbb F_{q}(t), \, h(E) = d}}\frac{\# \sel_n(E)}{\#\aut(E)} .
\end{align}
	\end{enumerate}
\end{proposition}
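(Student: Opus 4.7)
The plan is to deduce both parts by invoking \autoref{lemma:stack-point-count} and then performing careful orbit--stabilizer counting, with an additional non-smooth locus bound needed for part (2).

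\emph{Part (1).} First apply \autoref{lemma:stack-point-count} with $X = \espace d {\bz[1/2]}$ and $G = \bg_a^{2d+1} \rtimes \bg_m$; this $G$ is smooth with geometrically connected fibers over $\spec \bz[1/2]$, so the lemma gives $\#\estack d {\bz[1/2]}(\mathbb F_q) = \#\espace d {\bz[1/2]}(\mathbb F_q)/\#G(\mathbb F_q)$. Because $G$ is connected, Lang's theorem trivializes every $G$-torsor over $\mathbb F_q$, so the groupoid $\estack d {\bz[1/2]}(\mathbb F_q)$ is the quotient $\espace d {\bz[1/2]}(\mathbb F_q) // G(\mathbb F_q)$, and orbit--stabilizer yields $\#\estack d {\bz[1/2]}(\mathbb F_q) = \sum_{[x]} 1/\#\stab_G(x)$. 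The uniqueness of the minimal Weierstrass form modulo the $G$-action, recalled in \autoref{subsubsection:height}, identifies these orbits with $\mathbb F_q(t)$-isomorphism classes of elliptic curves of height $d$ and identifies the stabilizer at a Weierstrass form of $E$ with $\aut(E)$, giving \eqref{equation:weierstrass-stacky-count}.

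\emph{Part (2).} The same setup applied to the induced $G$-action on $\sspace n d {\bz[1/2n]}$ yields
\[
\#\sstack n d {\bz[1/2n]}(\mathbb F_q) = \sum_{[E],\, h(E)=d} \frac{\#\pi^{-1}(x_{[E]})(\mathbb F_q)}{\#\aut(E)},
\]
where $x_{[E]}$ is any Weierstrass form representing $[E]$. For $x_{[E]} \in \smespace d {\bz[1/2n]}$, \autoref{corollary:equality-selmer-fibers} replaces $\#\pi^{-1}(x_{[E]})$ by $\#\sel_n(E)$ exactly, so the discrepancy between $\#\sstack n d {\bz[1/2n]}(\mathbb F_q)$ and $\sum_{[E]} \#\sel_n(E)/\#\aut(E)$ is concentrated over the non-smooth locus $\mathcal N := \espace d {\bz[1/2n]} \setminus \smespace d {\bz[1/2n]}$.

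To control this discrepancy, note that \autoref{corollary:uniform-bound-selmer-fibers} gives $\#\sel_n(E) \leq n^2 \#\pi^{-1}(x_{[E]})$, so the Selmer sum over non-smooth $[E]$ and the stacky count restricted to $\mathcal N$ differ by at most a multiplicative constant $C_{n,d}$; it thus suffices to bound the latter. By \autoref{lemma:reduced-discriminant} the closed subscheme $\mathcal N$ has codimension at least one in $\espace d B$, hence $\dim \mathcal N \leq 12d+2$; combined with the fact that the fibers of the quasifinite map $\sspace n d {\bz[1/2n]} \to \espace d {\bz[1/2n]}$ are uniformly bounded in size (by constructibility of $\ssheaf n d B$ over the noetherian base $\espace d B$), \autoref{lemma:algebraic-space-lang-weil} applied componentwise gives $\#\sspace n d B|_{\mathcal N}(\mathbb F_q) = O_{n,d}(q^{12d+2})$. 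Dividing by $\#G(\mathbb F_q) = (q-1) q^{2d+1}$ yields an $O_{n,d}(q^{10d})$ bound on the stacky contribution of $\mathcal N$.

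It remains to bound the main term from below. We have $\sum_{[E]} \#\sel_n(E)/\#\aut(E) \geq \#\smestack d {\bz[1/2n]}(\mathbb F_q)$, and by \autoref{lemma:algebraic-space-lang-weil} applied to $\smespace d {\bz[1/2n]}$ (a fiberwise dense open of the geometrically integral $\aff d {\bz[1/2n]}$ of dimension $12d+3$) together with division by $\#G(\mathbb F_q)$, this quantity is at least $c_{n,d}\, q^{10d+1}$ for some $c_{n,d} > 0$. Therefore the relative error between $\#\sstack n d {\bz[1/2n]}(\mathbb F_q)$ and $\sum_{[E]} \#\sel_n(E)/\#\aut(E)$ is $O_{n,d}(q^{-1}) \subseteq O_{n,d}(q^{-1/2})$, giving \eqref{equation:selmer-stacky-count}. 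The main obstacle is the careful accounting at non-smooth Weierstrass forms, where $\#\pi^{-1}(x)$ can fail to equal $\#\sel_n(E_x)$; this is precisely why \autoref{corollary:uniform-bound-selmer-fibers} is set up earlier.
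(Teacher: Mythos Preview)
Your proof is correct and follows essentially the same approach as the paper. The only organizational difference is that you carry out the error analysis for part (2) at the stack level (summing over isomorphism classes $[E]$), whereas the paper first applies \autoref{lemma:stack-point-count} to reduce to the space level and then bounds $\#\sspace n d {\bz[1/2n]}(\mathbb F_q)$ against $\sum_{x\in\espace d {\bz[1/2n]}(\mathbb F_q)}\#\sel_n(E_x)$ directly; both routes use the same three inputs (\autoref{corollary:equality-selmer-fibers} on $\smespace d B$, \autoref{corollary:uniform-bound-selmer-fibers} on the complement, and Lang--Weil for the dimension drop), and your absolute-error-then-divide bookkeeping in fact yields the slightly sharper $O_{n,d}(q^{-1})$.
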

\begin{proof}
	We first prove \eqref{equation:weierstrass-stacky-count}.
	For $q$ prime to $2$, we just need to check that elliptic curves over $\mathbb F_q(t)$ of height $d$ are in bijection with $\estack d {\bz[1/2]}(\mathbb F_q)$, so that this bijection
	respects automorphisms group sizes.
	This follows because every elliptic curve over $\mathbb F_q(t)$ can be expressed in terms of a minimal Weierstrass equation $y^2z = x^3 + a_2(s,t)x^2z + a_4(s,t)xz^2 + a_6(s,t)z^3$,
	and two are isomorphic precisely if they are related by the $\bg_a^{2d+1} \rtimes \bg_m$ action of \autoref{definition:selmer-stack}, see
	\cite[4.8 and Lemma 4.9]{de-jong:counting-elliptic-surfaces-over-finite-fields}.

	We next prove \eqref{equation:selmer-stacky-count}.
	Using the identification discussed in the previous paragraph between $\estack d {\bz[1/2]}(\mathbb F_q)$ and elliptic curves over $\mathbb F_q(t)$ of height $d$,
	\autoref{lemma:stack-point-count} applied to $\sstack n d {\bz[1/2n]} = \left[ \sspace n d {\bz[1/2n]}/ \bg_a^{2d+1} \rtimes \bg_m \right]$
	and $\estack d {\bz[1/2n]} = \left[ \espace d {\bz[1/2n]}/ \bg_a^{2d+1} \rtimes \bg_m \right]$
	reduces the issue to showing
	$\# \sspace n d {\bz[1/2n]}(\mathbb F_q) = (1 + O_{n,d}(q^{-1/2})) \sum\limits_{\substack{x \in \espace d {\bz[1/2n]}(\mathbb F_q)}} \# \sel_n(E_x)$.
	Let $\pi: \sspace n d {\bz[1/2n]} \ra \espace d {\bz[1/2n]}$ denote the projection.
	Using \autoref{lemma:algebraic-space-lang-weil} and \autoref{corollary:equality-selmer-fibers}, we have the lower bound 
	\begin{align*}
		\# \sspace n d {\bz[1/2n]}(\mathbb F_q) &= 
		(1 + O_{n,d}(q^{-1/2})) \# \smsspace n d {\bz[1/2n]}(\mathbb F_q) \\
		&\leq
	(1 + O_{n,d}(q^{-1/2}))\sum\limits_{\substack{x \in \espace d {\bz[1/2n]}(\mathbb F_q)}} \# \left( \pi^{-1}(x)(\mathbb F_q) \right) \\
		&=
		(1 + O_{n,d}(q^{-1/2}))\sum\limits_{\substack{x \in \espace d {\bz[1/2n]}(\mathbb F_q)}} \# \sel_n(E_x).
	\end{align*}
	By \autoref{lemma:algebraic-space-lang-weil}, \autoref{corollary:equality-selmer-fibers}, and \autoref{corollary:uniform-bound-selmer-fibers},
	we have the upper bound
\begin{equation}
		\label{equation:upper-bound-selmer}
	\scalebox{.82}{\parbox{.5\linewidth}{
	\begin{align*}
		\# \sspace n d {\bz[1/2n]}(\mathbb F_q) &=	 
	\# \pi^{-1}(\smespace d {\bz[1/2n]})(\mathbb F_q)  +  \# \pi^{-1}(\espace d {\bz[1/2n]} - \smespace d {\bz[1/2n]})(\mathbb F_q)\\	
		&=(1 + O_{n,d}(q^{-1/2})) \left(\# \pi^{-1}(\smespace d {\bz[1/2n]})(\mathbb F_q)    +  n^2 \cdot \# \pi^{-1}(\espace d {\bz[1/2n]} - \smespace d {\bz[1/2n]})(\mathbb F_q) \right) \\
		&=(1 + O_{n,d}(q^{-1/2})) \left( \sum_{x \in \smespace d {\bz[1/2n]}(\mathbb F_q)} \# \pi^{-1}(x)(\mathbb F_q)  + n^2 \cdot \sum_{x \in (\espace d {\bz[1/2n]} - \smespace d {\bz[1/2n]})(\mathbb F_q)} \# \pi^{-1}(x)(\mathbb F_q) \right) \\
		&\geq (1 + O_{n,d}(q^{-1/2})) \left(\sum\limits_{\substack{x \in \espace d {\bz[1/2n]}(\mathbb F_q) }} \# \sel_n(E_x) + 
		n^2 \cdot \sum\limits_{\substack{x \in (\espace d {\bz[1/2n]} - \smespace d {\bz[1/2n]})(\mathbb F_q) }} \frac{\# \sel_n(E_x)}{n^2} \right) \\
		&\geq (1 + O_{n,d}(q^{-1/2}))\left(\sum\limits_{\substack{x \in \espace d {\bz[1/2n]}(\mathbb F_q)}} \# \sel_n(E_x) + 
		\sum\limits_{\substack{x \in (\espace d {\bz[1/2n]} - \smespace d {\bz[1/2n]})(\mathbb F_q)}} \# \sel_n(E_x) \right) \\
		&= 
		(1 + O_{n,d}(q^{-1/2}))\sum\limits_{\substack{x \in \espace d {\bz[1/2n]}(\mathbb F_q)}} \# \sel_n(E_x).
	\end{align*}
}}
\end{equation}
Combining the upper and lower bounds above yields \eqref{equation:selmer-stacky-count}.
\end{proof}

We are finally ready to prove our main theorem, \autoref{theorem:number-components}.
Our strategy is to combine the preceding results in this section to relate the left hand side of \eqref{equation:average-selmer} to the number of geometric components
of $\sspace n d {\mathbb F_q}$, which is $\sum_{m \mid n} m$ by 
\autoref{corollary:number-components}.

\subsection{Proof of \autoref{theorem:number-components}}
\label{subsection:proof-of-main-theorem}
	We first argue one may ignore the contributions from elliptic curves of height $0$.
	We have $\# \espace 0 {\mathbb F_q}(\mathbb F_q) < q^3$ (corresponding to choices of $a_{2i}(s,t) \in \mathbb F_q$ for $i \in \{1,2,3\}$).
	Since height $0$ elliptic curves $E$ have 
	$\fcomp{E} =1$, we obtain $\sce^0 \simeq \sce$ and
	$\sel_n(E) \simeq H^1(\bp^1, \sce[l])\simeq H^1(\bp^1, \sce^0[l])$ by 
	\cite[Proposition 5.4(c)]{cesnavicius:selmer-groups-as-flat-cohomology-groups}.
	Therefore, by \autoref{proposition:selmer-space-and-h1},
each elliptic curve has $n$-Selmer group of size at most $H^0(\bp^1_{\mathbb F_q}, \sce^0[n]) = E[n](\mathbb F_q(t)) \leq n^2.$ 
Hence, the contribution to the numerator and denominator in the definition of $\average^{\leq d}(\sel_n/ \mathbb F_q(t))$ from \eqref{equation:average} coming from height
	$0$ curves is at most $n^2 \cdot q^3$. We can safely ignore this contribution in the large $q$ limit for $d > 0$ because the number of isomorphism classes of elliptic curves of height $d$ over $\mathbb F_q(t)$ is on the order of
	$2 \cdot q^{10d+1}$ by \cite[Proposition 4.16]{de-jong:counting-elliptic-surfaces-over-finite-fields}.

	We next claim that the contribution to both the numerator and denominator 
	in the definition of $\average^{\leq d}(\sel_n/ \mathbb F_q(t))$ from \eqref{equation:average} coming from elliptic curves of height $< d$ and
	the closed locus of elliptic curves with more than $2$ automorphisms be ignored in the large $q$ limit.
	Indeed, these contributions can be bounded by applying \autoref{proposition:stacky-selmer-count} to relate them to points of $\estack d {\bz[1/2]}$ and $\sstack n d {\bz[1/2n]}$,
	applying \autoref{lemma:stack-point-count} to relate them to points of $\espace d {\bz[1/2]}$ and $\sspace n d {\bz[1/2n]}$, and \autoref{lemma:algebraic-space-lang-weil} to
	bound the resulting contribution.
	Therefore, we obtain that
\begin{align}
	\nonumber
	\lim_{\substack{q \ra \infty \\ \gcd(q,2n) = 1}}\frac{\sum\limits_{\substack{E/\mathbb F_{q}(t), \, h(E) \leq d}} \# \sel_n(E)}{\# \left\{ E: E/\mathbb F_q(t), \,  h(E) \leq d \right\}}
	&= 
	\lim_{\substack{q \ra \infty \\ \gcd(q,2n) = 1}}\frac{\sum\limits_{\substack{E/\mathbb F_{q}(t)\\ h(E) = d}} \frac{\# \sel_n(E)}{2}}{\sum\limits_{\substack{E/\mathbb F_q(t)\\  h(E) = d}} \frac{1}{2}} \\
	&= 
	\label{equation:add-auts}
\lim_{\substack{q \ra \infty \\ \gcd(q,2n) = 1}}\frac{\sum\limits_{\substack{E/\mathbb F_{q}(t)\\ h(E) = d}} \frac{\# \sel_n(E)}{\#\aut(E)}}{ \sum\limits_{\substack{E/\mathbb F_q(t)\\ h(E) = d}} \frac{1}{\# \aut(E)}}.
\end{align}

By applying \autoref{proposition:stacky-selmer-count} and \autoref{lemma:stack-point-count} to both the numerator and denominator of \eqref{equation:add-auts}, we see
\begin{align}
	\nonumber
	\lim_{\substack{q \ra \infty \\ \gcd(q,2n) = 1}}\frac{\sum\limits_{\substack{E/\mathbb F_{q}(t), \, h(E) = d}} \frac{\# \sel_n(E)}{\#\aut(E)}}{ \sum\limits_{\substack{E/\mathbb F_q(t)\\  h(E) = d}} \frac{1}{\#\aut(E)}} &= \lim_{\substack{q \ra \infty \\ \gcd(q,2n) = 1}}\frac{\# \sstack n d {\bz[1/2n]}(\mathbb F_q)}{\#\estack d {\bz[1/2]}(\mathbb F_q)} \\
	\label{equation:component-ratio}
	&= \lim_{\substack{q \ra \infty \\ \gcd(q,2n) = 1}}\frac{\# \sspace n d {\bz[1/2n]}(\mathbb F_q)}{\#\espace d {\bz[1/2]}(\mathbb F_q)}.
\end{align}
Note also that each irreducible component of $\sspace n d {\bz[1/2n]}$ has geometrically irreducible fiber over $\spec \mathbb F_p \to \spec \bz[1/2n]$:
indeed, this follows from \autoref{corollary:number-components} and \autoref{proposition:monodromy-tame}, the latter implying that the reduction modulo $p$ of any component with geometrically irreducible generic fiber is again a geometrically irreducible.
Hence, using this, the fact that
$\espace d {\bz[1/2]}$ has a single irreducible
component which is geometrically irreducible,
and \autoref{lemma:algebraic-space-lang-weil}, the ratio
in \eqref{equation:component-ratio} is simply the number of irreducible components of $\sspace n d {\bz[1/2n]}$.
This number of components is $\sum_{m \mid n} m$ for $d \geq 2$ by \autoref{corollary:number-components}.
\qed

\bibliographystyle{alpha}
\bibliography{/home/aaron/Dropbox/master}

\end{document}